\newcommand{\prox}{\text{Prox}}
\newcommand{\proj}{\text{Proj}}
\theoremstyle{plain}
\newtheorem{proposition}{Proposition}
\newtheorem{problem}{Problem}
\theoremstyle{definition} 
\newtheorem{remark}{Remark}
\newcommand{\rd}{\mathrm{d}}
\newcommand{\energy}{\mathcal{E}}
\title{Structure preserving primal dual methods for gradient flows with nonlinear mobility transport distances}
\author{Jos{\'e} A. Carrillo\thanks{Mathematical Institute, University of Oxford, Oxford OX2 6GG, UK ({carrillo@maths.ox.ac.uk}).},  
Li Wang\thanks{School of Mathematics, University of Minnesota Twin Cities, Minneapolis, MN 55455 ({wang8818@umn.edu}).}, 
Chaozhen Wei\thanks{School of Mathematical Sciences, University of Electronic Science and Technology of China, Chengdu, Sichuan 611731, China ({cwei4@uestc.edu.cn}).}}
\begin{document}
\maketitle
	






\begin{abstract}
    We develop structure preserving schemes for a class of nonlinear mobility continuity equation. When the mobility is a concave function, this equation admits a form of gradient flow with respect to a Wasserstein-like transport metric. Our numerical schemes build upon such formulation and utilize  modern large scale optimization algorithms. There are two distinctive features of our approach compared to previous ones. On one hand, the essential properties of the solution, including positivity, global bounds, mass conservation and energy dissipation are all guaranteed by construction. On the other hand,  it enjoys sufficient flexibility when applies to a large variety of problems including different free energy functionals, general wetting boundary conditions and degenerate mobilities. The performance of our methods are demonstrated through a suite of examples. 
\end{abstract}

%
%



\section{Introduction}
We consider a nonlinear mobility continuity equation of the form
\begin{equation}\label{eq:0flow_eqn}
\partial_t \rho = -\nabla_x\cdot (M(\rho)v(\rho))\,,
\end{equation}
where the velocity $v(\rho)$, in the most general form, may contain
\begin{align} \label{vel}
    v(\rho)=- \nabla_x H'(\rho)+ \nabla_x \Delta_x \rho(x) - \nabla_x V(x) - \nabla_x W \ast \rho\,.
\end{align}
Here $H(\rho)$, $V(x)$ and $W(x)$ are given functions with various meanings depending on the specific context.  Typical examples include:
\begin{itemize}
    \item Lubrication model for thin films \cite{bertozzi1998mathematics} in which case $\rho$ represents the thickness of the film. The mobility and velocity take the expression
    \begin{align} \label{thin}
        M(\rho) = \rho^3, \quad v(\rho) = -(Ca)^{1/2} \nabla_x\rho + \nabla_x \Delta_x \rho 
    \end{align}
    in the simplest scenario. Here $Ca$ is the capillary number. 

    \item Cahn-Hilliard equation for phase separation in binary alloys \cite{cahn1961spinodal}. In this case, $\rho$ is often defined to be the difference of local concentrations of two components in the alloy and therefore is in the range of $[-1,1]$. The mobility is required to be zero in the pure component, i.e., $\rho = \pm 1$, and strictly positive for $|\rho|<1$, which naturally leads to the choice 
    \begin{align} \label{CH0}
        M(\rho) = 1-\rho^2. 
    \end{align}
    The velocity has the form of
    \begin{align} \label{CH-v}
        v(\rho) = - \nabla_x \frac{\delta \mathcal{E}}{\delta \rho} ,\qquad
        \mathcal{E}(\rho) := \int_{\Omega} H(\rho(x)) + \frac{\epsilon^2}{2}  |\nabla \rho(x)|^2 \rd x  \,,
    \end{align}
    with $\mathcal E(\rho)$ being the Ginzburg-Landau free energy. The specific form of $H$ will be given in Section~\ref{sec:CH}.

    \item Chemotaxis with prevention of overcrowding \cite{burger2006keller} by assuming a saturation of the population density $\rho$. This then yields the mobility of the form 
    \begin{align} \label{chemo}
        M(\rho) = \rho (1-\rho)\,.
    \end{align}
\end{itemize}

The nonlinear mobility always comes with a degeneracy, as explicit from \eqref{thin} when $\rho =0$, or \eqref{CH0} when $\rho = \pm 1$, or \eqref{chemo} when $\rho =0$ or $1$. This degeneracy, although makes the development of the well-posedness theory a lot difficult, has favorable effect on the global bounds of the solution. In particular, it has been conjectured and proved in certain cases that for \eqref{eq:0flow_eqn} with \eqref{thin},  there is a critical threshold in the power of the mobility such that, when the power is above that threshold, the solution remains positive if started out positively  \cite{beretta1995nonnegative}. This is genuinely not true for constant mobility case due to the oscillatory feature of the forth order heat kernel. Likewise, for Cahn-Hilliard equation \eqref{eq:0flow_eqn} \eqref{CH-v} with nonlinear mobility \eqref{CH0}, the solution with initial data $|\rho(0,\cdot)| \leq 1$ has the property that $|\rho(t,\cdot)|\leq 1$ for all later time $t$ \cite{elliott1996cahn}. This is again in sharp contrast to constant mobility case which does not preserve such bounds over time due to the lack of comparison principle.

The mathematical machinery that produces the above results stems from the seminal papers \cite{bernis1990higher, elliott1996cahn}, which established two important Lyapunov functionals. One is the energy functional \eqref{CH-v} or 
\begin{align*}
    \mathcal{E}(\rho) := \int_{\Omega} \left[H(\rho(x)) + V(x) \rho(x) \right]  \rd x + \frac{1}{2} \int_{\Omega} |\nabla \rho(x)|^2 \rd x  + \frac{1}{2} \int_{\Omega} \rho(x) (W \ast \rho (x)) \rd x
\end{align*}
corresponding to the more general velocity \eqref{vel}. This energy shall decay over time and leads to regularity estimates. 
Another is the entropy-like functional 
\begin{align} \label{entropy}
    \mathcal U (\rho) := \int_\Omega a(\rho(x)) \rd x, \quad \text{where} ~ a''(\rho) =\frac{1}{M(\rho)}\,.  
\end{align}
When $M(\rho) = \rho$, $a(\rho) = \rho \log \rho$. If the mobility $M(\rho)$ degenerates strongly at the extreme values of $\rho$, $\mathcal U$ controls $\rho$ close to its extremes and therefore leads to the global bounds.  

More recently, with the advent of optimal transport theory, \eqref{eq:0flow_eqn}-\eqref{vel} can be characterized as a gradient flow with respect to a transport metric \cite{AGS05}. This is particularly true if $M(\rho)$ is concave and satisfies other properties \cite{dolbeault2009new, carrillo2010nonlinear}. As a result, the weak solution to \eqref{eq:0flow_eqn}-\eqref{vel} can be obtained by the minimizing movement scheme \cite{lisini2012cahn}. When $M(\rho) \equiv 1$, it reduces to the constant metric in Hilbert spaces; when $M(\rho)=\rho$, it is the well-studied Wasserstein-2 metric.  This variational viewpoint, avoids the cumbersome justification of the propagation of global bounds at the analytical level, and will also be the stepping stone of our numerical methods developed in this paper. 

Indeed, the complex structure of the equation \eqref{eq:0flow_eqn}-\eqref{vel}, originated from the degeneracy in mobility and high order derivatives, poses severe challenges in designing reliable numerical solver that would yield physically relevant solutions. One early attempt is in \cite{barrett1999finite}  where a nonnegativity preserving finite element method was proposed. The main idea there is to solve a varational problem with a Lagrangian multiplier to advance the negative solution. 
Later in \cite{zhornitskaya1999positivity}, the authors showed that, by conducting the discretization following the idea from entropy (defined in \eqref{entropy}) dissipation at the continuous level, the so derived finite difference scheme is positivity preserving. More recently, a popular line of research concerns the development of scalar auxiliary variable methods \cite{shen2019new}, which extend significantly on the idea of convex splitting  \cite{elliott1993global, eyre1998unconditionally}. This approach, although has been successfully applied to many examples, is still under development for general variable mobilities. 

Structure preserving finite volume methods have also been developed for Wasserstein gradient flows of zeroth-order functionals \cite{CCH2015,BCH2020}, for more general mobilities with saturation \cite{BCH2023}, and for first order functionals \cite{bailo2021unconditional} including Cahn-Hilliard type problems as in the present work. These methods have the advantage of keeping the sharp bounds in case of degenerate mobilities while incorporating convex splitting of the free energy functional to obtain their dissipation property at the fully discrete level. These methods are applicable beyond equations with a gradient flow structure being bound preserving with nonlinear mobilities even for systems \cite{BCH2023,falco2022local}.

In this paper, we will develop a new approach based on the variational formulation mentioned above. More precisely, we rewrite \eqref{eq:0flow_eqn} and \eqref{vel} as  
\begin{equation}\label{eq:flow_eqn}
\begin{dcases}
\partial_t \rho = -\nabla\cdot (M(\rho)v)= \nabla \cdot \Big(M(\rho) \nabla \frac{\delta \mathcal{E}}{\delta \rho}\Big) ,\\
\mathcal{E}(\rho)= \int_{\Omega} \left[H(\rho(x)) + V(x) \rho(x) \right]  \rd x + \frac{\epsilon^2}{2} \int_{\Omega} |\nabla \rho(x)|^2 \rd x  +\int_{\partial\Omega}f_w(\rho,\beta_w) ds\,.
\end{dcases} 
\end{equation}
Here have omitted the interaction term involving $W$ for simplicity, but the methods to be developed shall directly apply. We also add the surface integral of $f_w(\rho,\beta_w)$ to describe the wall free energy.
It is defined piece-wisely: on the substrate $\Gamma_w \subset \partial \Omega$, its value depends on the phase field $\rho$ at the wall and the equilibrium contact angle $\beta_w$ between the free interface and the substrate, determined by the balance of local surface tensions; it is zero on the on the non-substrate boundaries $\partial \Omega \setminus \Gamma_w$. 

Taking the variation of $\mathcal E$ with respect to $\rho$:
\begin{align*}
    \frac{d}{ds} \energy(\rho + s h) \big|_{s = 0} &= \lim_{s\rightarrow 0} \frac{1}{s} \left[ \energy(\rho + sh) - \energy(\rho) \right]
    \\ & = \int_\Omega (H'(\rho) + V - \epsilon^2 \Delta \rho) \rd \Omega + 
    \int_{\partial \Omega} (\epsilon^2 \nabla \rho \cdot \nu + f_w'(\rho,\beta_w)) \rho \rd s\,,
\end{align*}
where $\nu$ is an inward-pointing unit vector normal to the wall and $f_w'(\rho,\beta_w)$ denotes the derivative of $f_w(\rho,\beta_w)$ with respect to $\rho$. Then the chemical potential, defined as as the first variation of $\mathcal{E}$ w.r.t. $\rho$, is
\[
\frac{\delta \mathcal{E}}{\delta \rho} = H'(\rho) + V - \epsilon^2 \Delta \rho\,.
\]
And the boundary conditions for \eqref{eq:flow_eqn} are a combination of the equilibrium boundary condition for the wall free energy and the no-flux condition for the chemical potential \cite{lee2011accurate,aymard2019linear},
\begin{equation}\label{eq:mixbc}
	\epsilon^2 {\nabla} \rho \cdot {\nu} = -f_w'(\rho,\beta_w),\quad M(\rho) {\nabla} \frac{\delta \mathcal{E}}{\delta \rho} \cdot {\nu} = 0,
\end{equation}

Our approach will then be a numerical realization of the minimizing movement scheme \cite{jordan1998variational, lisini2012cahn}. This is a nontrivial extension to the previous works on Wasserstein gradient flow \cite{carrillo2022primal, LJW20} in the following aspects: 1) we propose a new bound preserving proximal solver for the nonlinear transport metric; 2) a nontrivial boundary condition is integrated to account for the wall effect; 3) an preconditioned version of the original primal dual method is explored to accelerate the convergence; 4) the developed methods have been applied to a number of challenging examples. A related work is in \cite{wang2022hessian}, where a mirror descent method is developed for variable metric gradient flow. By building the Hessian information in the mirror variable, it accelerates the convergence in optimization and preserve the solution bounds. Compared to the current paper, the method in \cite{wang2022hessian} is built upon a semi-implicit rather than fully implicit version of the minimizing movement approach, and has only been tested for simple prototype models. 

The rest of the paper is organized as follows. In the next section, we provide the semi-discrete variational formulation based on a fluid dynamic version of the new transport metric, followed by a fully discrete schemes in both one and two dimensions. Section 3 is devoted to the computation of the proximal operator and resulting primal dual algorithms. Several numerical tests are conducted in Section 4, including various energy functionals and boundary conditions. 


\section{Variational formulation}
\subsection{Semi-discrete JKO scheme}
Following the dynamic formulation of the JKO scheme \cite{benamou2000computational, carrillo2022primal}, we propose the following variational formulation.
\begin{problem}[Generalized dynamic JKO] \label{prob:JKO}
	Denote the momentum $m(x,t)=M(\rho)v$. Given $\rho^k(x)$, solve $\rho^{k+1}(x)=\rho(x,1)$ by
	\begin{equation*}\label{eq:JKO}
	\begin{dcases}
	(\rho(x,t), m(x,t)) \in \arg \inf_{(\rho,m)} \frac{1}{2}d_{\mathcal{W}_m}^2(\rho,\rho^k)  + \tau \mathcal{E}(\rho(\cdot,1)),\\
	\text{s.t. } \partial_t \rho +\nabla\cdot m = 0, \ \rho(x,0)=\rho^k(x), \ m\cdot \nu =0,
	\end{dcases} 
	\end{equation*}
	where 
	\begin{align*}\label{eq:Phi}
	d_{\mathcal{W}_m}^2(\rho,\rho^k) = \int_0^{1}\int_{\Omega} \phi (\rho,m) \rd x \rd t,\quad 
	\phi(\rho,m) = \left\{\begin{array}{ll}
	\frac{| m |^2}{M(\rho)} & \text{if $M(\rho)>0$ }, \\
	0 & \text{if $(M(\rho),m)=(0,0) $}, \\
	\infty & \text{otherwise.}
	\end{array}\right.
	\end{align*}
\end{problem} 
As with the vanilla JKO formulation, our generalized version share similar desirable traits such as energy dissipation and mass conservation. Moreover, it also preserves the bound of the solution automatically. 
\begin{proposition}
The variational formulation has the following properties for any $k \geq 0$:
\begin{itemize}
    \item[i)] Energy dissipation: $\mathcal{E}(\rho^{k+1})\leq \mathcal{E}(\rho^{k})$;
    \item[ii)] Mass conservation: $\int_\Omega \rho^{k+1} \rd x = \int_\Omega \rho^k \rd x$;
    \item[iii)] Bound preservation for nonlinear mobility $M(\rho)=(\rho-\alpha)(\beta-\rho)$: $\alpha \leq \rho^k \leq \beta $.
\end{itemize}
\end{proposition}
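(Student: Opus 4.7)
I would handle the three claims in order, the first two by standard JKO-style arguments and the third by exploiting the extended-value definition of $\phi$ itself, rather than performing any delicate integration by parts on a truncation of $\rho$.

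For energy dissipation (i), the plan is to use the stationary trajectory $(\rho(\cdot,t), m(\cdot,t)) \equiv (\rho^k, 0)$ as a competitor in the minimization of Problem~\ref{prob:JKO}. It satisfies the continuity equation and the no-flux boundary condition, and $\phi(\rho^k, 0) = 0$ in every branch of the definition (the only nontrivial case, $M(\rho^k) = 0$, falls into the $(M(\rho),m)=(0,0)$ branch). Hence $d_{\mathcal{W}_m}^2(\rho^k, \rho^k) = 0$, and by optimality of $(\rho^{k+1}, m^{k+1})$,
\begin{equation*}
    \tfrac{1}{2}\, d_{\mathcal{W}_m}^2(\rho^{k+1}, \rho^k) + \tau \mathcal{E}(\rho^{k+1}) \;\le\; \tau\, \mathcal{E}(\rho^k).
\end{equation*}
Dropping the nonnegative first term yields the claim.

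For mass conservation (ii), I would integrate the continuity equation over $\Omega$ and apply the divergence theorem together with $m\cdot\nu = 0$, obtaining $\tfrac{d}{dt}\int_\Omega \rho\,\rd x = -\int_{\partial\Omega} m\cdot\nu\,\rd s = 0$, then integrate in $t$ over $[0,1]$. For bound preservation (iii), I would argue by induction in $k$. Assume $\alpha \le \rho^k \le \beta$ a.e. The trivial competitor used in (i) has zero action and finite energy, so (using the lower boundedness of $\mathcal{E}$, which holds for every energy considered in the paper) the minimizer also has finite action, i.e.\ $\int_0^1\!\int_\Omega \phi(\rho,m)\,\rd x\,\rd t < \infty$. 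This forces $\phi(\rho(x,t),m(x,t)) < \infty$ for almost every $(x,t)$; inspecting the three branches of the piecewise definition of $\phi$, only the first two can yield finite values, and both require $M(\rho) \ge 0$. For the specific mobility $M(\rho) = (\rho - \alpha)(\beta - \rho)$ this is exactly the condition $\rho(x,t) \in [\alpha,\beta]$. Passing to the time-continuous representative of the minimizing curve extends the bound to every $t \in [0,1]$, and in particular to $\rho^{k+1} = \rho(\cdot,1)$.

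The main obstacle is the last step of (iii): upgrading an a.e.-in-time bound to a pointwise-in-time bound at the endpoint $t=1$. This would be handled by invoking the regularity theory for absolutely continuous curves in the nonlinear mobility transport space developed in \cite{dolbeault2009new, carrillo2010nonlinear}, which provides a canonical time-continuous representative. The lower-boundedness hypothesis on $\mathcal{E}$, used implicitly to deduce finite action from a finite JKO objective, is a mild condition that is straightforwardly verified for each of the concrete free energies (Cahn–Hilliard, lubrication, chemotaxis) appearing in the paper.
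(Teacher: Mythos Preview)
Your proposal is correct and follows essentially the same approach as the paper, which dispatches all three claims in a single sentence each: energy dissipation ``is a direct consequence of minimization'', mass conservation ``is guaranteed by the constraint of continuity equation along with zero-flux boundary condition'', and bound preservation ``comes from the penalization encoded in the definition of $\phi(\rho,m)$''. You have simply unpacked these terse justifications---naming the stationary competitor for (i), spelling out the divergence-theorem argument for (ii), and making explicit the finite-action reasoning behind (iii)---and your additional care about the a.e.-to-endpoint passage and the lower-boundedness of $\mathcal{E}$ goes beyond what the paper records.
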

\begin{proof}
Property i) is a direct consequence of minimization. Property ii) is guaranteed by the constraint of continuity equation along with zero-flux boundary condition. Property iii) comes from the penalization encoded in the  definition of $\phi(\rho,m)$.
\end{proof}

\begin{remark}
We would like to emphasize that keeping the mobility term {\it implicitly} in the definition of the distance is important. This is because if we do it otherwise, such as 
freezing the metric at the previous time step, and viewing \eqref{eq:flow_eqn} as a weighted $H^{-1}$ gradient flow, we will lose the ability to confine the solution in a bounded domain as listed in iii) in the above theorem. 
\end{remark}

\subsection{Fully discrete schemes}

We now provide a full discretization to \eqref{prob:JKO}. As pointed out in \cite{LJW20}, we can remove the artificial time in the dynamic formulation by simply replacing the time derivative with a {\it one-step} finite difference, and therefore arrive at the following formulation:  
\begin{equation*}
\begin{dcases}
(\rho^{k+1}(x), m^{k+1}(x)) = \arg \inf_{(\rho,m)}  \frac{1}{2}d_{\mathcal{W}_m}^2(\rho,\rho^k) + \tau \mathcal{E}(\rho),\\
\text{s.t. } \rho(x) -\rho^k(x) +\nabla\cdot m(x) = 0, \ m\cdot \nu =0\,.
\end{dcases} 
\end{equation*}
In the next two subsections, we will discuss in detail the spatial treatment in the finite volume setting with an effort to conserve mass at the discrete level. 

\subsubsection{One-dimensional case}
For 1D problem, we discretize the computational domain $[a,b]$ into $N_x$ cells $C_i=[x_{i-1/2},x_{i+1/2}]$ with uniform size $\Delta x=(b-a)/N_x$ for $i=1,\ldots,N_x$, and let $a= x_{1/2}$ and $b= x_{N_x+1/2}$. Then each cell $C_i$ is centered at $x_i=a+(i-1/2)\Delta x$. We assume the numerical solution $\rho(t,x)$ at each time $t$ is a piecewise constant function with value $\rho_i^k$ in cell $C_i$ at time $t^k$. In the following discussion, we may drop the superscript $k$ when it does not cause any confusion. 

Then the weighted Wasserstein distance can be approximated by the midpoint rule: 
\begin{equation*}
(d_{\mathcal{W}_m}^h)^2(\rho) = \sum_{i=1}^{N_x}\phi(\rho_i,m_i)\Delta x\,.
\end{equation*}
The continuity equation is discretized as  
\begin{equation*} 
\rho_i+\frac{1}{2\Delta x}(m_{i+1}-m_{i-1})=\rho_i^k, \quad \text{for $i=1,\ldots,N_x$}\,,
\end{equation*}
where $m_0$ and $m_{N_x+1}$ can be obtained from the no-flux boundary condition. More precisely, since $m_{1/2}=(m_0+m_1)/2=0$ and $m_{N_x+1/2}=(m_{N_x}+m_{N_x+1})/2=0$, we have  
\begin{equation*} 
m_0=-m_1,\quad m_{N_x+1}=-m_{N_x}. 
\end{equation*}

The discretization of the energy functionals, denoted as $\mathcal{E}^{h}(\rho^h)$, reads
\begin{equation}\label{eq:E2_1d_dis}
\mathcal{E}^{h} = \sum_{i=1}^{N} \Big(H(\rho_i)+V(x_i)\rho_i\Big)\Delta x + \frac{\epsilon^2}{4}\Big((\nabla \rho)_{\frac{1}{2}}^2+2\sum_{i=1}^{N_x-1}(\nabla \rho)_{i+\frac{1}{2}}^2+(\nabla \rho)_{N_x+\frac{1}{2}}^2\Big)\Delta x +f_w(\rho_{\frac{1}{2}})+f_w(\rho_{N_x+\frac{1}{2}}),
\end{equation}
where we have employed the trapezoidal rule for the Dirichlet energy and $(\nabla \rho)_{i+1/2}=(\rho_{i+1}-\rho_i)/\Delta x$ for $i=0,\ldots,N_x$. Here the values on the boundaries $\rho_{1/2}=(\rho_0+\rho_1)/2$ and $\rho_{N_x+1/2}=(\rho_{N_x}+\rho_{N_x+1})/2$ (or equivalently the values at the ghot points $\rho_0$ and $\rho_{N_x+1}$) can be determined via 
the wetting boundary conditions (see Remark~\ref{rmk:boundary_value})
\begin{align}
&\epsilon^2\left(\frac{\rho_1-\rho_0}{\Delta x}\right) =f_w'(\rho_{1/2}), \label{eq:lbc_1d}\\
&\epsilon^2\left(\frac{\rho_{N+1}-\rho_{N}}{\Delta x}\right)=f_w'(\rho_{N_x+1/2}). \label{eq:rbc_1d}
\end{align}

Then the gradient of $\mathcal{E}^h$ is computed as
\begin{align*}  
\frac{\partial \mathcal{E}^h}{\partial \rho_i}=\left\{\begin{array}{ll}
\Big(H'(\rho_1)+V(x_1)\Big)\Delta x - \epsilon^2\Big(\frac{\rho_{2}-\rho_1}{\Delta x}\Big)+f_w'(\frac{\rho_{0}+\rho_{1}}{2}) & \text{if $i=1$ }, \\
\Big(H'(\rho_{N_x})+V(x_{N_x})\Big)\Delta x - \epsilon^2\Big(\frac{\rho_{N_x}-\rho_{N_x-1}}{\Delta x}\Big)+f_w'(\frac{\rho_{N_x}+\rho_{N_x+1}}{2}) & \text{if $i=N_x$ }, \\
\Big(H'(\rho_i)+V(x_i)\Big)\Delta x - \epsilon^2\Big(\frac{\rho_{i+1}-2\rho_i+\rho_{i-1}}{\Delta x}\Big) & \text{otherwise}, 
\end{array}\right.
\end{align*}
where we have used the linear approximation of $\rho_0$ and $\rho_{N_x}$ and the wetting boundary conditions \eqref{eq:lbc_1d} and \eqref{eq:rbc_1d}.

In summary, the one-dimensional fully discrete JKO scheme is:
\begin{problem}[1D discrete generalized dynamic JKO]  \label{prob:JKO1D}
Given $\{\rho^k\}_{i=1}^{N}$, solve $\{\rho^{k+1}\}_{i=1}^{N}$ by
	\begin{equation*}
		\begin{dcases}
			(\rho^{k+1}_i, m^{k+1}_i) = \arg \inf_{(\rho^h,m^h)}  \sum_{i=1}^{N}\frac{1}{2}\phi(\rho_i,m_i)\Delta x + \tau \mathcal{E}^h(\rho),\\
			\text{s.t. } \rho_i+\frac{1}{2\Delta x}(m_{i+1}-m_{i-1})=\rho_i^k,  \quad m_0=-m_1, ~m_{N+1}=-m_{N}.
		\end{dcases} 
	\end{equation*}
	where $\mathcal E$ is computed via \eqref{eq:E2_1d_dis}.
\end{problem}

\begin{remark} [Use of mix boudnary conditions]
The mix boundary conditions \eqref{eq:mixbc} are implicitely used in the discrete JKO scheme: the wetting boundary conditions are used in the discretization of free energy functional $\mathcal{E}^h$ and its derivatives $\partial \mathcal{E}^h/\partial \rho_i$; the no-flux boundary conditions are used in the discretization of the constraint of continuity equation. 
\end{remark}

\begin{remark} [Determination of boundary values using wetting boundary conditions]
\label{rmk:boundary_value}
We can use the wetting boundary conditions \eqref{eq:lbc_1d} and \eqref{eq:rbc_1d} to determine the values of $\rho_{1/2}$ and $\rho_{N_x+1/2}$ at the boundaries (and hence $\rho_0$ and $\rho_{N_x}$ at the ghost points) in the evaluation of $\mathcal{E}^h$ and $\partial \mathcal{E}^h/\partial \rho_i$. We employ a cubic-polynomial wall energy which can both ensure the vanishing of normal gradient of the phase field in the bulk region and avoiding the formation of the wall layer
\begin{equation}\label{eq:fw}
f_w(\rho,\beta_w)=\frac{\epsilon}{\sqrt{2}}\cos\beta_w(\rho^3/3-\rho).
\end{equation}

Here we consider the determination of $\rho_{1/2}$ for illustrative purpose. Inserting $\rho_0=2\rho_{1/2}-\rho_1$ into Eq.~(\ref{eq:lbc_1d}) leads a quadratic equation for $\rho_{1/2}$
\begin{equation}\label{eq:quad_eq}
\gamma X^2 + \epsilon X- (\epsilon \rho_1+\gamma)=0
\end{equation}
where $\gamma=\frac{\sqrt{2}\Delta x}{4}\cos\beta_w$. Note that Eq.~(\ref{eq:quad_eq}) has two solutions and $\rho_{1/2}$ is the solution that lies within the range of the phase-field:
\begin{align*}
\rho_{1/2}=\left\{\begin{array}{ll}
\displaystyle \frac{\epsilon}{2\gamma}+\sqrt{(\rho_1-\frac{\epsilon}{2\gamma})^2+(1-\rho_1)^2} & \text{if $\cos\beta_w>0$ }, \\
\displaystyle \frac{\epsilon}{2\gamma}-\sqrt{(\rho_1-\frac{\epsilon}{2\gamma})^2+(1-\rho_1)^2} & \text{if $\cos\beta_w<0$ }, \\
\rho_1 & \text{if $\cos\beta_w=0$ }.
\end{array}\right.
\end{align*}
Similarly, we can obtain the value of $\rho_{N_x+1/2}$ using \eqref{eq:rbc_1d}. 
\end{remark}

\subsubsection{Two-dimensional case}
Consider the computational domain $\Omega=[a,b]\times[c,d]$, where the substrate boundary is $\Gamma_w=[a,b]\times\{y=c\}$ and the non-substarte boundary is $\partial \Omega \setminus \Gamma_w=\{x=a \ \text{ or}\ x=b\}\times[c,d]\cup [a,b]\times\{y=d\}$. We divide the domain $\Omega$ into $N_x\times N_y$ cells $C_{i,j}=[x_{i-1/2},x_{i+1/2}]\times[y_{j-1/2},y_{j+1/2}]$ with uniform size $\Delta x\Delta y=\Big(\frac{b-a}{N_x}\Big)\Big(\frac{d-c}{N_y}\Big)$ for $i=1,\ldots,N_x$ and $j=1,\ldots,N_y$. Then $a = x_{1/2}$, $b = x_{N_x+1/2}$, $c = y_{1/2}$, $d = y_{N_y+1/2}$, and the center of the cell $C_{i,j}$ is $(x_i,y_i)$ with $x_i=a+(i-1/2)\Delta x$ and $y_j=c+(j-1/2)\Delta y$. 

The discrete energy is obtained by applying the mid-point rule for the integral of $H(\rho)$, $V(x)\rho$ and $f_w(\rho)$ and the trapezoidal rule for the integral of $\|\nabla \rho\|$:
\begin{align}\label{eq:E2_2d_dis}
    	\mathcal{E}^{h} =
	& \sum_{i=1}^{N_x} \sum_{j=1}^{N_y} \Big(H(\rho_{i,j})+V(x_{i,j})\rho_{i,j}\Big)\Delta x \Delta y + \frac{\epsilon^2}{2}\Big(\sum_{j=1}^{N_y}\sum_{i=1}^{N_x-1}(\nabla_x \rho)_{i+\frac{1}{2},j}^2+\sum_{i=1}^{N_x}\sum_{j=1}^{N_y-1}(\nabla_y \rho)_{i,j+\frac{1}{2}}^2 \Big)\Delta x \Delta y \nonumber \\
	&+ \frac{\epsilon^2}{4} \sum_{i=1}^{N_x} (\nabla_y \rho)_{i,\frac{1}{2}}^2\Delta x \Delta y
	+ \sum_{i=1}^{N_x}f_w(\rho_{i,\frac{1}{2}})\Delta x,
\end{align}

where we have used the non-substrate wetting boundary condition $\nabla \rho\cdot \hat{n}=0$ on $\partial \Omega \setminus \Gamma_w$
\begin{equation*}
	(\nabla_x \rho)_{\frac{1}{2},j}=(\nabla_x \rho)_{N_x+\frac{1}{2},j}=(\nabla_y \rho)_{i,N_y+\frac{1}{2}}=0.
\end{equation*}
The gradient of $\rho$ is approximated by second-order finite difference:
\begin{equation*}
	(\nabla_x \rho)_{i+\frac{1}{2},j}=\frac{\rho_{i+1,j}-\rho_{i,j}}{\Delta x}, \quad (\nabla_y \rho)_{i,j+\frac{1}{2}}=\frac{\rho_{i,j+1}-\rho_{i,j}}{\Delta y}.
\end{equation*}
Then the gradient of $\mathcal{E}^{h}$ is 
\begin{equation}\label{eq:gradE_2d_dis}
	\frac{\partial \mathcal{E}^h}{\partial \rho_{i,j}} = \Big(H'(\rho_{i,j})+V(x_{i,j})\rho_{i,j} -\epsilon^2 L_{i,j}\Big)\Delta x\Delta y + W_{i,j} \Delta x,
\end{equation}
where $L_{i,j}=(L^x)_{i,j}+(L^y)_{i,j}$ is 
\begin{align*}\label{eq:L_dis}
	(L^x)_{i,j}=\left\{\begin{array}{ll}
		\displaystyle\frac{\rho_{2,j}-\rho_{1,j}}{\Delta x^2} & \text{if $i=1$ }, \\
		\displaystyle\frac{\rho_{i-1,j}-\rho_{i,j}}{\Delta x^2} & \text{if $i=N_x$}, \\
		\displaystyle\frac{\rho_{i+1,j}-2\rho_{i,j}+\rho_{i-1,j}}{\Delta x^2} & \text{otherwise }, 
	\end{array}\right. 
	(L^y)_{i,j}=\left\{\begin{array}{ll}
		\displaystyle\frac{\rho_{i,2}-\rho_{i,1}}{\Delta y^2} & \text{if $j=1$ }, \\
		\displaystyle\frac{\rho_{i,j-1}-\rho_{i,j}}{\Delta y^2} & \text{if $j=N_y$},  \\
		\displaystyle\frac{\rho_{i,j+1}-2\rho_{i,j}+\rho_{i,j-1}}{\Delta y^2} & 	\text{otherwise}, \\
\end{array}\right.
\end{align*}
and the wall-energy part $W_{i,j}$ is
\begin{align*}
	W_{i,j}=\left\{\begin{array}{ll}
		f'_w(\rho_{i,1/2}) & \text{if $j=1$ }, \\
		0 & \text{otherwise }.
	\end{array}\right. 
\end{align*}
Note that we have used the approximation $\rho_{i,1/2}=(\rho_{i,0}+\rho_{i,1})/2$ and the wetting boundary condition on solid substrate $\Gamma_w$ in the derivation of $\nabla \mathcal{E}^h(\rho^h)$
\begin{equation*}
	\epsilon^2\Big(\frac{\rho_{i,1}-\rho_{i,0}}{\Delta y}\Big)=f'_w\Big(\frac{\rho_{i,0}+\rho_{i,1}}{2}\Big).
\end{equation*}
Again, we obtain the values $\rho_{i,1/2}$ (and hence $\rho_{i,0}$) involved in Eqs.~(\ref{eq:E2_2d_dis}) and (\ref{eq:gradE_2d_dis}) by the above boundary condition according to Remark~\ref{rmk:boundary_value}. Then we have the following discrete JKO scheme

\begin{problem}[2D discrete generalized dynamic JKO] Given $\{\rho^k\}_{i,j}$, solve $\{\rho^{k+1}\}_{i,j}$ by
	\begin{equation*}
		\begin{dcases}
			(\rho^{k+1}_{i,j}, \hat{m}^{k+1}_i) = \arg \inf_{(\rho^h,\mathbf{m}^h)}  \sum_{i=1}^{N_x}\sum_{j=1}^{N_y}\frac{1}{2}\phi(\rho_{i,j},\hat{m}_{i,j})\Delta x \Delta y+ \tau \mathcal{E}^h(\rho^h),\\
			\text{s.t. } \rho_{i,j}+\frac{1}{2\Delta x}(m^x_{i+1,j}-m^x_{i-1,j})+\frac{1}{2\Delta y}(m^y_{i,j+1}-m^y_{i,j-1})=\rho_{i,j}^k,  \\
			m^x_{0,j}=-m^x_{1,j}, m^x_{N_x+1,j}=-m^x_{N_x,j}, m^y_{i,0}=-m^y_{i,1}, m^y_{i,N_y+1}=-m^y_{i,N_y}.
		\end{dcases} 
	\end{equation*}
\end{problem}

\section{Primal-Dual algorithm}
Upon discretization, the discrete generalized dynamic JKO scheme amounts to solve an optimization problem subject to a linear constraint:
\begin{equation*}\label{eq:min_cons}
\min_{u} \Phi(u) + \tau E(u), \quad \text{s.t. $Au=b$,}
\end{equation*}
where we have rewritten the constraint of the discretized continuity equation in the form $Au=b$ and we define
\begin{align*}
&u = (\hat{\rho},\hat{m}^x,\hat{m}^y)=\big((\rho_{i,j})_{1\leq i\leq N_x}^{1\leq j\leq N_y},(m^x_{i,j})_{1\leq i\leq N_x}^{1\leq j\leq N_y},(m^y_{i,j})_{1\leq i\leq N_x}^{1\leq j\leq N_y}\big),\\
&\Phi(u)=\sum_{i=1}^{N_x}\sum_{j=1}^{N_y}\frac{1}{2}\phi(\rho_{i,j},\hat{m}_{i,j})\Delta x \Delta y, \\
&E(u) = \mathcal{E}^h(\hat{\rho}).
\end{align*}
This minimization problem can be reformulated as an unconstrained optimization problem
\begin{equation*}\label{eq:min_uncons}
\min_{u} \Phi(u) + \tau E(u)+ i_{\delta}(Au), \quad
i_{\delta}(y)=\left\{\begin{array}{ll}
0 & \text{if $\|Au-b\|\leq \delta$ }, \\
\infty & \text{otherwise}.
\end{array}\right.
\end{equation*}
Here we relax the equality of the linear constraint at the fully discrete level to an inequality by a small parameter $\delta$, given that even an exact solution of the continuity equation at continuum level will only satisfy the discrete linear constraint up to an error term depending on the order the finite difference operators.

\subsection{Primal-Dual method for three operators}
We can apply the primal dual splitting scheme for three operators (PD3O) in \cite{yan2018new} to solve this minimization problem:
\begin{equation}\label{eq:PDHG}
\begin{dcases}
\varphi^{(l+1)} = \prox_{\sigma i_\delta^*} (\varphi^{(l)} + \sigma \mathsf{A} \bar{{u}}^{(l)}),\\
{u}^{(l+1)} = \prox_{{\lambda\Phi}} ({u}^{(l)} -  \lambda \nabla E( {u}^{(l)}) - \lambda\mathsf{A}^t \varphi^{(l+1)}),\\
\bar{{u}}^{(l+1)} = 2{u}^{(l+1)} - {u}^{(l)} +  \lambda  \nabla E( {u}^{(l)})-  \lambda \nabla E( {u}^{(l+1)}),
\end{dcases} 
\end{equation}
where we require $\sigma\lambda<1/\lambda_{max}(AA^t)$ for the convergence. The PD3O algorithm for one step of discrete dynamic JKO scheme is shown in Algorithm~\ref{alg:PDHG}, where we choose the initial guesses as follows (here we take 2D case for illustrative purpose):
\begin{align*}
u^0 = (\rho^0,\mathbf{0}_{N_x\times N_y},\mathbf{0}_{N_x\times N_y}\mathbf{0}_{N_x\times N_y}.),\quad
\varphi^0 = \mathbf{0}_{N_x\times N_y}.
\end{align*}
We update the variables until achieving the stopping criteria that consist of the constraint and the convergence monitors:
\begin{align*}
&\|Au^{(l+1)}-b\|_2=\big|\rho_{i,j}-\rho_{i,j}^0+\frac{m^x_{i+1,j}-m^x_{i-1,j}}{2\Delta x}+\frac{m^y_{i,j+1}-m^y_{i,j-1}}{2\Delta y}\big|^2\Delta x \Delta y\leq \delta,\\
&\max\Big\{\frac{\|u^{(l+1)}-u^{(l)}\|}{\|u^{(l+1)}\|}, \frac{\|\varphi^{(l+1)}-\varphi^{(l)}\|}{\|\varphi^{(l+1)}\|}\Big\}\leq \text{TOL}, \\
&\max\Big\{\frac{|E(u^{(l+1)})-E(u^{(l)})|}{|E(u^{(l+1)}|},\frac{|\Phi(u^{(l+1)})-\Phi(u^{(l)})|}{|\Phi(u^{(l+1)}|}\Big\}\leq \text{TOL}.
\end{align*}

The success of this algorithm depends on the ease of computing the two proximal operators, which in general is not trivial. Fortunately, we can compute $\prox_{{\lambda\Phi}}$ easily by performing Newton iteration method with a strategy for choosing initial guesses that guarantee the convergence (which is dicussed in Section~\ref{subsec:prox_Phi}), and we also have an explicit formula for $\prox_{\sigma i_{\delta}^*}$. By Moreau’s identity, we can write $\prox_{\sigma i_\delta^*}$ in terms of projections onto balls of
radius $\sigma$ centered at $b$:
\begin{equation*}
\prox_{\sigma i_\delta^*}(y) = y-\sigma \proj_{B_\delta}(y/\sigma), \quad \proj_{B_\delta}(y) = \begin{cases} y & \|y-b\|_2 \leq \delta \, , \\ \delta \frac{ y-b}{\|y-b\|_2}+b & \text{ otherwise.}  \end{cases}
\end{equation*}

\begin{algorithm}[H]
	\caption{Primal-Dual for one step of dynamic JKO}\label{alg:PDHG}
	\SetAlgoLined
	\KwIn{${u}^{0}$, $\varphi^{0}$, $\text{Iter}_{\text{max}}$, $\lambda, \sigma, \tau  >0$}
	\KwOut{${u}^*$, $ \varphi^*$  }
	
	\BlankLine
	
	Let $\bar{{u}}^{0}={u}^{0}$ and $l = 0$;\\
	\While{$l <\text{Iter}_\text{max}$}{
		\Repeat{stopping criteria is achieved}{
			$\varphi^{(l+1)} = \prox_{\sigma i_\delta^*} (\varphi^{(l)} + \sigma \tilde{\mathsf{A}} \bar{{u}}^{(l)})$,
			\\ ${u}^{(l+1)} = \prox_{\lambda {\Phi}} ({u}^{(l)} - \lambda  \nabla E( {u}^{(l)}) - \lambda \tilde{\mathsf{A}}^t \varphi^{(l+1)})$,
			\\ $\bar{{u}}^{(l+1)} = 2{u}^{(l+1)} - {u}^{(l)} + \lambda  \nabla E( {u}^{(l)})- \lambda  \nabla E( {u}^{(l+1)})$\,,
			
		}
		${u}^* = {u}^{(l+1)}$ \\
		$\varphi^* = \varphi^{(l + 1)}$}
\end{algorithm}

\subsection{Accelerated Primal-Dual method by preconditioning}
For the phase-separation simulation for the 2D Cahn-Hilliard equation (Fig.~\ref{fig:CH_2D_separation_rho}), PD3O method (Algorithm~\ref{alg:PDHG}) may converge slowly (see Fig.~\ref{fig:PD_ADMM}). Inspired by the recent work on the acceleration of original primal dual method for two operators by preconditioning \cite{liu2021acceleration}, we propose the preconditioned primal-dual algorithm for three operators (PrePD3O, see Algorithm~\ref{alg:PrePDHG}) that converges much faster:
\begin{equation*}\label{eq:PrePDHG}
	\begin{dcases}
		\varphi^{(l+1)} = \prox_{i_\delta^*}^{M_2} (\varphi^{(l)} + M_2^{-1} \mathsf{A} \bar{{u}}^{(l)}),\\
		{u}^{(l+1)} = \prox_{{\phi}}^{M_1} ({u}^{(l)} -  M_1^{-1} \nabla E( {u}^{(l)}) - M_1^{-1}\mathsf{A}^t \varphi^{(l+1)}),\\
		\bar{{u}}^{(l+1)} = 2{u}^{(l+1)} - {u}^{(l)} +  M_1^{-1}  \nabla E( {u}^{(l)})-  M_1^{-1}  \nabla E( {u}^{(l+1)}),
	\end{dcases} 
\end{equation*}
where the extended proximal operator is defined as 
\begin{equation*}
	\prox_f^{M}(y) = \arg \min_{x} \frac{1}{2}\|x-y\|_{M}^2+f(x), \ \text{where $\|x\|_{M}^2=x^tMx$}.
\end{equation*}
Here we use $M_1=\frac{1}{\lambda}I$ and $M_2=\lambda \mathsf{A}\mathsf{A}^{t}$ with $\lambda$ being a tuning parameter to achieve acceleration. Then $\prox_{{\phi}}^{M_1}=\prox_{\lambda {\phi}}$ as given in Section~\ref{subsec:prox_Phi}.  Moreover, one can show the Moreau's-like identity for the extended proximal operator
\begin{equation*}
	\prox_{f^*}^{M_2}(y) = y-M_2^{-1}\prox_{f}^{M_2^{-1}}(M_2 y),
\end{equation*}
which provides the expression of $\prox_{i_\delta^*}^{M_2}(y)$ in terms of projections onto balls of radius $\delta$ centered at $b$:
\begin{equation*}
\prox_{i_\delta^*}^{M_2}(y) = y-M_2^{-1}\proj_{B_\delta}(M_2 y), \quad \proj_{B_\delta}(y) = \begin{cases} y & \|y-b\|_2 \leq \delta \, , \\ \delta \frac{ y-b}{\|y-b\|_2}+b & \text{ otherwise.}  \end{cases}
\end{equation*}

\begin{algorithm}[H]
	\caption{Preconditioned Primal-Dual for one step of dynamic JKO}\label{alg:PrePDHG}
	\SetAlgoLined
	\KwIn{${u}^{0}$, $\varphi^{0}$, $\text{Iter}_{\text{max}}$, $\lambda, \sigma, \tau  >0$}
	\KwOut{${u}^*$, $ \varphi^*$  }
	
	\BlankLine
	
	Let $\bar{{u}}^{0}={u}^{0}$ and $l = 0$;\\
	\While{$l <\text{Iter}_\text{max}$}{
		\Repeat{stopping criteria is achieved}{
			$\varphi^{(l+1)} = \prox_{i_\delta^*}^{M_2} (\varphi^{(l)} + M_2^{-1} \tilde{\mathsf{A}} \bar{{u}}^{(l)})$,
			\\ ${u}^{(l+1)} = \prox_{{\phi}}^{M_1} ({u}^{(l)} -  M_1^{-1} \nabla E( {u}^{(l)}) - M_1^{-1}\tilde{\mathsf{A}}^t \varphi^{(l+1)})$,
			\\ $\bar{{u}}^{(l+1)} = 2{u}^{(l+1)} - {u}^{(l)} +  M_1^{-1}  \nabla E( {u}^{(l)})-  M_1^{-1}  \nabla E( {u}^{(l+1)})$,\\
			where $M_1=\frac{1}{\lambda}I$ and $M_2=\lambda AA^{t}$\,,
			
		}
		${u}^* = {u}^{(l+1)}$ \\
		$\varphi^* = \varphi^{(l + 1)}$}
\end{algorithm}

\subsection{Computing $\prox_{\lambda \Phi} (u)$} \label{subsec:prox_Phi}
The efficiency of the method relies largely on the computation of the proximal operator $\prox_{\lambda \Phi}$. For linear mobility $M(\rho)=\rho$, we obtained an explicit formula for the proximal operator, see \cite{carrillo2022primal}. This is however not true for the nonlinear mobility case with $M(\rho)=(\rho-\alpha)(\beta-\rho)$. Nevertheless,  Newton's iteration provides a viable surrogate for computing the proximal. In addition, we can prove that with appropriate choice of initial guess, the Newton iteration converges to a solution that lies within the desired range, and thus makes the whole solver bounded-preserving.

Since $\Phi(u)=\sum_{i,j}\frac{1}{2}\phi(\rho_{i,j},m_{i,j})$ is separable, its proximal operator is component-wise, i.e.,  $\prox_{{\lambda\Phi}}(u) = \big(\prox_{\frac{\lambda}{2}\phi}(\rho_{i,j},m_{i,j})\big)_{1\leq i\leq N_x}^{1\leq j\leq N_y}$. We compute the proximal operator $\prox_{\frac{\lambda}{2}\phi}(\rho,m)$ by using Newton iteration. For $M(\rho)=(\rho-\alpha)(\beta-\rho)$, we can choose appropriate initial values to guarantee the convergence of Newton iteration, irregardless of the time step $\tau$. Furthermore, we can guarantee that the convergent solution satisfies the desired bounds, $\rho\in[\alpha,\beta]$.

The proximal operator of $\phi(\rho,m)$ is given by
\begin{equation} \label{eq:prox}
\prox_{\frac{\lambda}{2} {\phi}}(\rho,m) = \arg \min_{\tilde{\rho},\tilde{m}} \frac{1}{2}|\tilde{\rho}-\rho|^2+\frac{1}{2}\|\tilde{m}-m\|^2+\frac{\lambda}{2} \phi(\tilde{\rho},\tilde{m}), 
\end{equation}
where 
\begin{align*}
\phi(\rho,m) = \left\{\begin{array}{ll}
\frac{\| m \|^2}{M(\rho)} & \text{if $M(\rho)>0$ }, \\
0 & \text{if $(M(\rho),m)=(0,0) $}, \\
\infty & \text{otherwise.}
\end{array}\right.
\end{align*}
The definition of $\phi(\rho,m)$ guarantees the preserving of the bounds of $\alpha\leq\rho^*\leq\beta$ for $(\rho^*,m^*)=\prox_{\frac{\lambda}{2} {\phi}}(\rho,m)$. 

Let us firstly restrict our consideration for $\alpha<\tilde{\rho}<\beta$, for which we consider the minimization 
\begin{equation*} 
\min F(\tilde{\rho},\tilde{m})=\frac{1}{2}|\tilde{\rho}-\rho|^2+\frac{1}{2}\|\tilde{m}-m\|^2+ \frac{\lambda\|\tilde{m} \|^2}{2M(\tilde{\rho})}.
\end{equation*}
The optimal conditions for minimization yields 
\begin{align*}
\begin{dcases}
\frac{\partial F}{\partial \tilde{\rho}}=\tilde{\rho}-\rho-\lambda M'(\tilde{\rho})\frac{\|\tilde{m}\|^2}{2M^2(\tilde{\rho})}=0,\\
\frac{\partial F}{\partial \tilde{m}}=\tilde{m}-m+\lambda \frac{\tilde{m}}{M(\tilde{\rho})}=0.
\end{dcases}
\end{align*}
which reduces to 
\begin{equation*}
f(\tilde{\rho})=\tilde{\rho}-\rho-\lambda M'(\tilde{\rho})\frac{\|m\|^2}{2(\lambda+M(\tilde{\rho}))^2}=0\,.
\end{equation*}	
We use the Newton iteration method to find the root $\rho^*\in(\alpha,\beta)$ of $f(\tilde{\rho})$. Depending on the monotonicity and concavity of $f(\tilde{\rho})$, we can choose the appropriate initial values to guarantee the convergence of the Newton iteration. 
Taking derivatives of $f(\tilde{\rho})$ gives
\begin{align*}
\begin{dcases}
f'(\tilde{\rho})= 1+ (M'(\tilde{\rho}))^2\frac{\lambda\|m\|^2}{(\lambda+M(\tilde{\rho}))^3}- M''(\tilde{\rho})\frac{\lambda\|m\|^2}{2(\lambda+M(\tilde{\rho}))^2},\\
f''(\tilde{\rho})=  M'(\tilde{\rho})\frac{3\lambda\|m\|^2}{(\lambda+M(\tilde{\rho}))^3}\Big(M''(\tilde{\rho})-\frac{(M'(\tilde{\rho}))^2}{\lambda+M(\tilde{\rho})}\Big)-M'''(\tilde{\rho})\frac{\lambda\|m\|^2}{2(\lambda+M(\tilde{\rho}))^2}.
\end{dcases}
\end{align*}
For $M(\tilde{\rho})=(\tilde{\rho}-\alpha)(\beta-\tilde{\rho})$ and $\lambda>0$, we can show that
\begin{enumerate}[label=\arabic*)]
    \item $f'(\tilde{\rho})>0$ for $\alpha<\tilde{\rho}<\beta$;
    \item $f''(\tilde{\rho})<0$ for $\alpha<\tilde{\rho}<(\alpha+\beta)/2$ and $f''(\tilde{\rho})>0$ for $(\alpha+\beta)/2<\tilde{\rho}<\beta$.
\end{enumerate} 
Given that $f(\tilde{\rho})$ is monotonically increasing in $(\alpha,\beta)$, if there exists a subinterval $(a,b)\subset(\alpha,\beta)$ such that $f(a)<0$ and $f(b)>0$, we have $\rho^*\in (a,b)$. Then we can choose the initial guess $\tilde{\rho}_0=a$ ($\tilde{\rho}_0=b$) if $f''(\tilde{\rho})<0$ ($f''(\tilde{\rho})>0$) to guarantee the convergence of the Newton iteration. 
By extending the domain of $f(\tilde{\rho})$ to where it has meaning, we can evaluate the following values
\begin{align*}
&f(\alpha) = \alpha-(\beta-\alpha)\frac{\|m\|^2}{2\lambda}-\rho \\
&f\left(\frac{\alpha+\beta}{2}\right)=\frac{\alpha+\beta}{2}-\rho \\
&f(\rho) = \frac{\lambda \|m\|^2}{(\lambda+M(\rho))^2}\Big(\rho-\frac{\alpha+\beta}{2}\Big)\\
&f(\beta) = \beta+(\beta-\alpha)\frac{\|m\|^2}{2\lambda}-\rho.
\end{align*}
Notice that the input of the proximal operator $\rho$ can be outside of $[\alpha,\beta]$ (see Eq.~\eqref{eq:PDHG}). Depending on the value of $\rho$, we have the following strategy for choosing the initial guess $\tilde{\rho}_0$ for the Newton iteration that converges to desired solution $\rho^*\in(\alpha,\beta)$:
\begin{description}
	\item[Case 1:] When $\alpha\leq\rho<(\alpha+\beta)/2$, we have $f(\rho)<0$ and $f((\alpha+\beta)/2)>0$, and hence $\rho^*\in(\rho,(\alpha+\beta)/2)\subset(\alpha,\beta)$. Since $f''(\tilde{\rho})<0$ on $\rho<\tilde{\rho}<(\alpha+\beta)/2$, we set the initial guess $\tilde{\rho}_0=\rho$.
	\item[Case 2:] When $(\alpha+\beta)/2<\rho\leq\beta$, we have $f((\alpha+\beta)/2)<0$, $f(\rho)>0$, and hence $\rho^*\in((\alpha+\beta)/2,\rho)\subset(\alpha,\beta)$. Since $f''(\tilde{\rho})>0$ on $(\alpha+\beta)/2<\tilde{\rho}<\rho$, we set the initial guess $\tilde{\rho}_0=\rho$.
	\item[Case 3:] When $\rho=(\alpha+\beta)/2$, $f((\alpha+\beta)/2)=0$, then the optimal solution is $\rho^*=(\alpha+\beta)/2$.
	\item[Case 4:] When $\alpha-(\beta-\alpha)\frac{\|m\|^2}{2\lambda}<\rho<\alpha$, $f(\alpha)<0$ and $f((\alpha+\beta)/2)>0$, hence $\rho^*\in(\alpha,(\alpha+\beta)/2)\subset(\alpha,\beta)$. Since $f''(\tilde{\rho})<0$ on $\alpha<\tilde{\rho}<(\alpha+\beta)/2$, we set the initial guess $\tilde{\rho}_0=\alpha$.
 	\item[Case 5:] When $\beta<\rho<\beta+(\beta-\alpha)\frac{\|m\|^2}{2\lambda}$, $f((\alpha+\beta)/2)<0$ and $f(\beta)>0$, hence $\rho^*\in((\alpha+\beta)/2,\beta)\subset(\alpha,\beta)$. Since $f''(\tilde{\rho})>0$ on $(\alpha+\beta)/2<\tilde{\rho}<\beta$, we set the initial guess $\tilde{\rho}_0=\beta$.
\end{description}	

When $\rho\leq\alpha-(\beta-\alpha)\frac{\|m\|^2}{2\lambda}$ ($\rho\geq\beta+(\beta-\alpha)\frac{\|m\|^2}{2\lambda}$), we have $f(\tilde{\rho})>f(\alpha)\geq 0$ ($f(\tilde{\rho})<f(\beta)\leq0$) on $(\alpha,\beta)$, and hence there exits no root for $f(\tilde{\rho})$ within $(\alpha,\beta)$. Then the solution $\rho^*$ to the proximal operator \eqref{eq:prox} must be obtained at the endpoints of $(\alpha,\beta)$, which follows
\begin{description}
	\item[Case 6:] When $\rho\leq\alpha-(\beta-\alpha)\frac{\|m\|^2}{2\lambda}$, we have $(\rho^*,m^*)=(\alpha,0)$.
	\item[Case 7:] When $\rho\geq\beta+(\beta-\alpha)\frac{\|m\|^2}{2\lambda}$, we have $(\rho^*,m^*)=(\beta,0)$.
\end{description}

\section{Numerical Results}
This section is devoted to showcasing the flexibility and applicability of our proposed approach to several challenging problems. 
\subsection{1D Saturation Experiment}
In the first example we investigate the saturation effect due to the nonlinear mobility. 
Consider the equation
\begin{equation}\label{eq:saturation}
\rho_t = \nabla\cdot \Big(\rho(\alpha-\rho)\nabla\Big(D\ln(\rho)+\frac{C}{2}|x|^2\Big)\Big).
\end{equation}
whose corresponding energy is 
\begin{equation*}
\mathcal{E}= D\rho(\log\rho-1)+\frac{C}{2}|x|^2,
\end{equation*}
with nonlinear degenerate mobility $M(\rho) = \rho(\alpha-\rho)$.
The steady state of this problem depends on the conserved mass of the solution $M=\|\rho_0\|_{L^1}$ in the sense that
\begin{equation}\label{eq:saturation_equil}
\rho_\infty(x) = \begin{dcases}
	M\sqrt{\frac{C}{2\pi D}} \exp{\Big(-\frac{C}{2D}x^2\Big)} & \text{if $M\leq M_c$ }, \\
	\alpha\exp{\Big(-\frac{C}{2D}(x^2-l^2)^+\Big)} & \text{if $M>M_c$}, 
	\end{dcases}
\end{equation}
where $M_c=\alpha\sqrt{\frac{2\pi D}{C}}$ is a critical mass, $(s)^+=\max\{s,0\}$, $l$ is a positive constant to make sure that $\|\rho_\infty\|_{L^1}=M$. Clearly, \eqref{eq:saturation_equil} indicates an upper bound on $\rho_\infty$, $\rho \leq \alpha$. More particularly, when $M$ is beyond the critical value $M_c$, $\rho_\infty$ has two segments: constant $\alpha$ when $|x| \leq l$ and an exponential decay function when $|x| \geq l$. This is the saturation effect. 

Numerically, we solve Eq.~(\ref{eq:saturation}) over the domain $\Omega=[-4,4]$ with parameters $\alpha=1$, $C=1$, $D=1$, and start from a uniform initial density $\rho_0\in[-4,4]$ with the supercritical mass $M=3.32$. We plot the evolution of $\rho$ for $t\in[0,15]$ computed by Generalized dynamics JKO scheme (see Problem~\ref{prob:JKO1D}) for various $\Delta x$, as shown in Fig.~\ref{fig:satuation_p1}. We observe oscillations as $\rho$ approaches 1, which can be reduced by finer mesh (see the comparison between the results for $\Delta x =0.014$ and $\Delta x=0.01$). 
\begin{figure}[H]
    \center{1D Saturation experiment by JKO scheme}\\
	\includegraphics[width=0.49\textwidth]{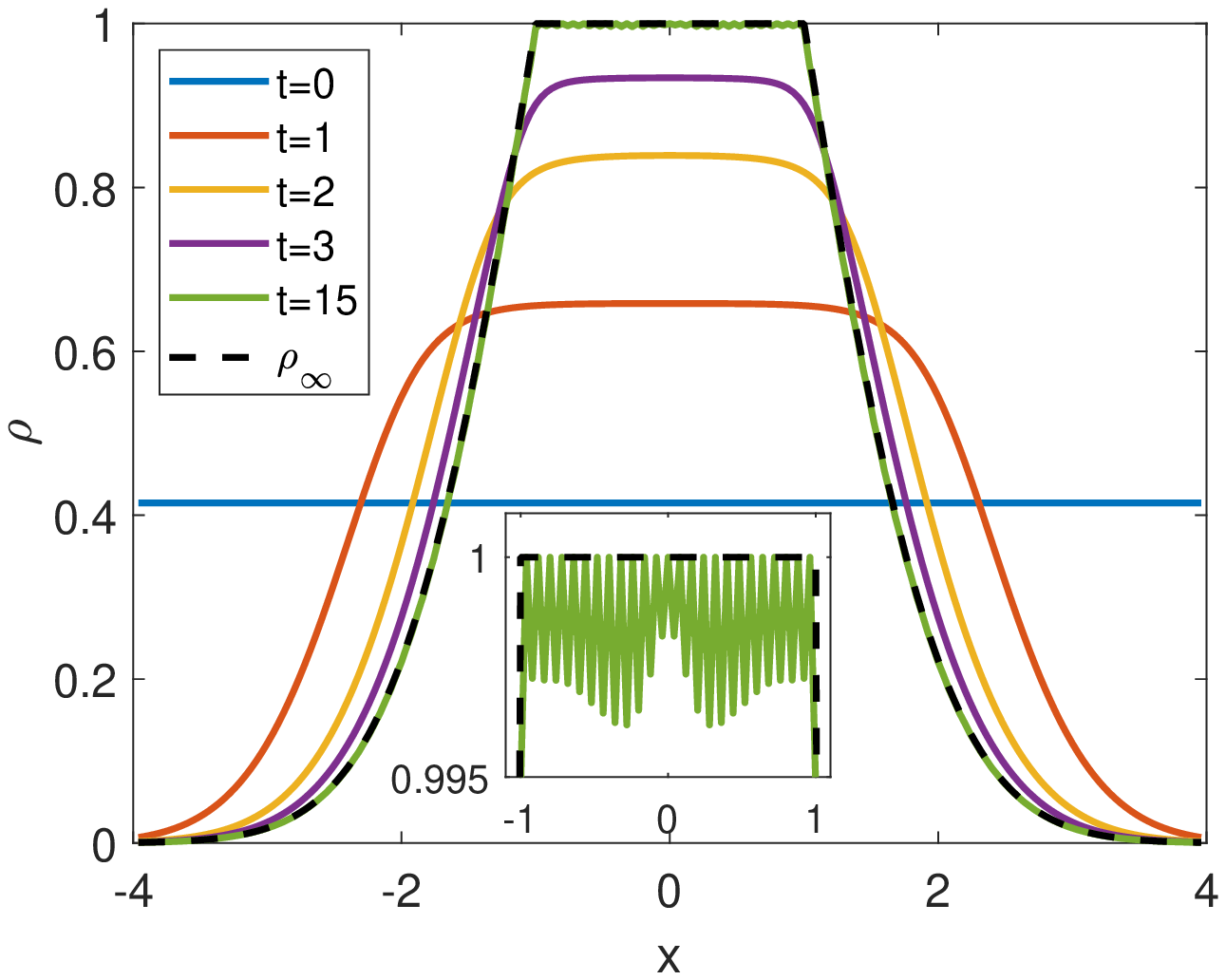}
	\includegraphics[width=0.495\textwidth]{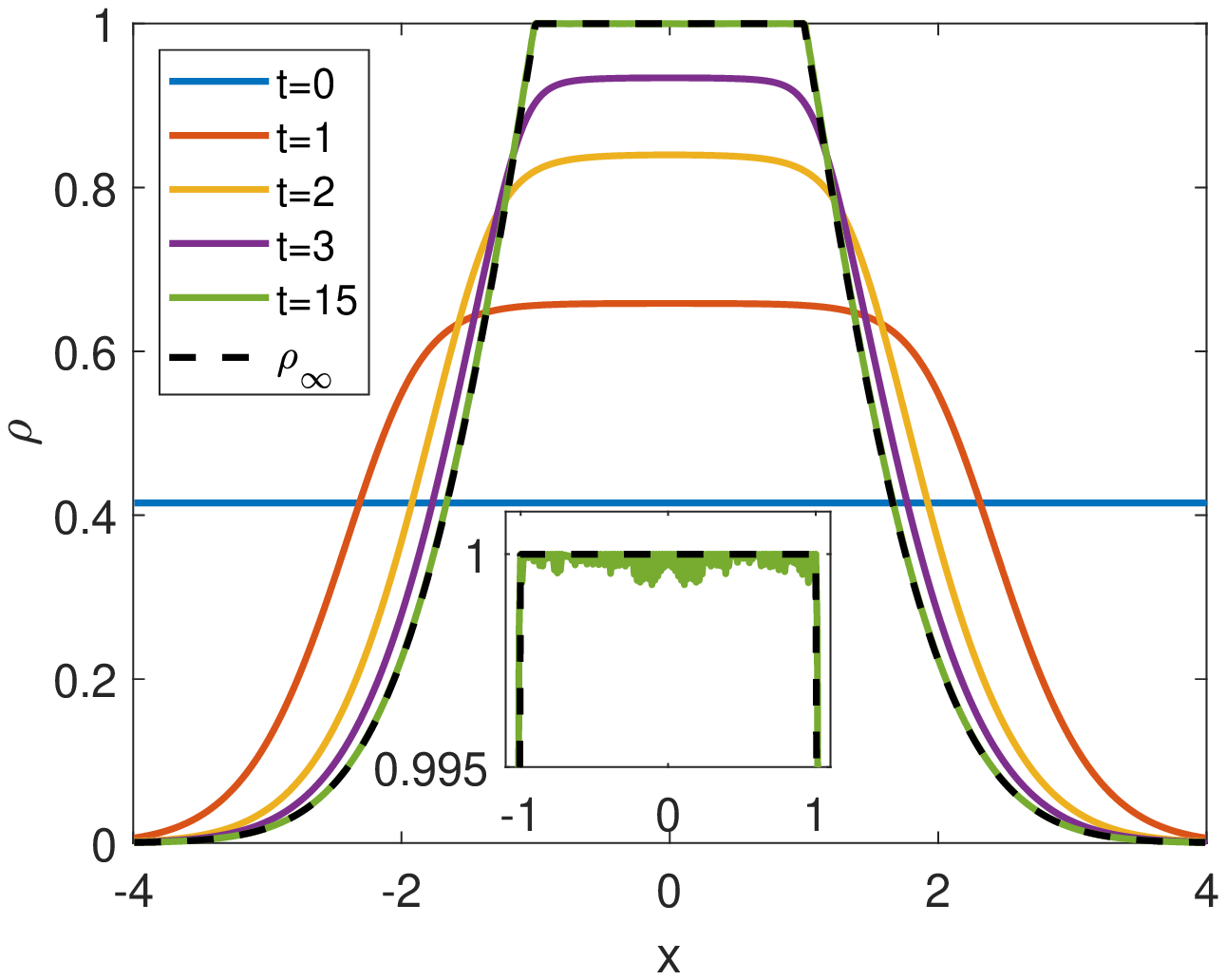}
	\caption{Evolution of solutions to 1D Saturation equation with mobility $M(\rho)=\rho(1-\rho)$ for $t\in[0,15]$. Left: $\Delta x=0.04$, $\tau=0.01$; Right: $\Delta x=0.02$, $\tau=0.01$. The insets are the zoom-in figures for the oscillation when $\rho$ is close to 1.}\label{fig:satuation_p1}
\end{figure}

Moreover, We can reduce the oscillation near $\rho=1$ by computing the evolution by the Generalized Schr\"{o}dinger bridge scheme (see SBP scheme in Remark~\ref{rmk:SBP}), which is equivalent to the fisher information regularization \cite{LJW20}. We implement the GSB scheme with adaptive regularization coefficient $\eta=1/(1-\|\rho\|_\infty)$ starting from $\eta_0=80$, shown in Fig.~\ref{fig:satuation_p2}. 

\begin{remark} [Generalized Schr\"{o}dinger bridge problem] \label{rmk:SBP}
To avoid oscillation as $\rho\rightarrow \alpha$ and $\rho\rightarrow \beta$ that may appear (for example, in the 1D saturation experiment), we propose the following scheme inspired by Schr\"{o}dinger bridge problem:

{\it Defining $\mathcal{H}(\rho)=\int_{\Omega}\frac{1}{\beta-\alpha}\Big((\rho-\alpha)\ln(\rho-\alpha)+(\beta-\rho)\ln(\beta-\rho)\Big)\rd x$, we solve $\rho^{k+1}(x)=\rho(x,1)$ by}
\begin{equation*} 
\begin{dcases}
(\rho(x,t), m(x,t)) = \arg \inf_{(\rho,m)} \int_0^{1}\int_{\Omega} \frac{\| m \|^2}{2M(\rho)} \rd x \rd t + \tau \Big(\mathcal{E}(\rho(\cdot,1))-\eta^{-1}\mathcal{H}(\rho(\cdot,1))\Big),\\
\text{s.t. } \partial_t \rho +\nabla\cdot m = \tau\eta^{-1}\Delta\rho, \ \rho(x,0)=\rho^k(x), \ (m-\tau\eta^{-1}\nabla\rho)\cdot \nu =0,
\end{dcases} 
\end{equation*}
The auxiliary entropy $\mathcal{H}$ keeps $\rho$ away from $\alpha$ and $\beta$. The generalized Schr\"{o}dinger bridge problem (SBP) is equivalent to the JKO scheme with fisher information regularization (FIR) and it does not violate the first-order accuracy of the JKO scheme. As $\eta\rightarrow \infty$, the SBP (or FIR) recovers the JKO scheme. 

\begin{figure}[H]
    \center{1D Saturation experiment by SBP scheme}\\	\includegraphics[width=0.49\textwidth]{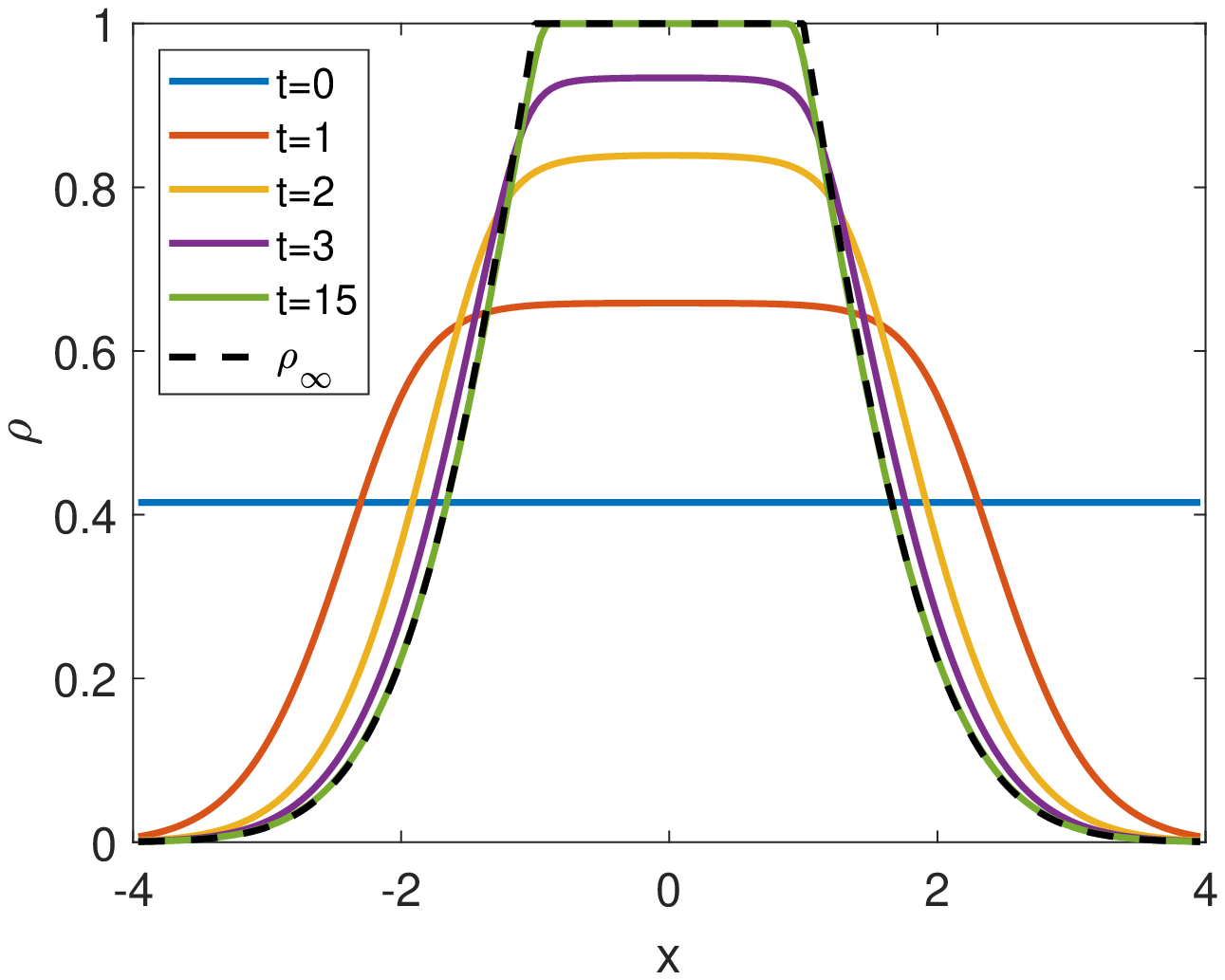}
	\includegraphics[width=0.49\textwidth]{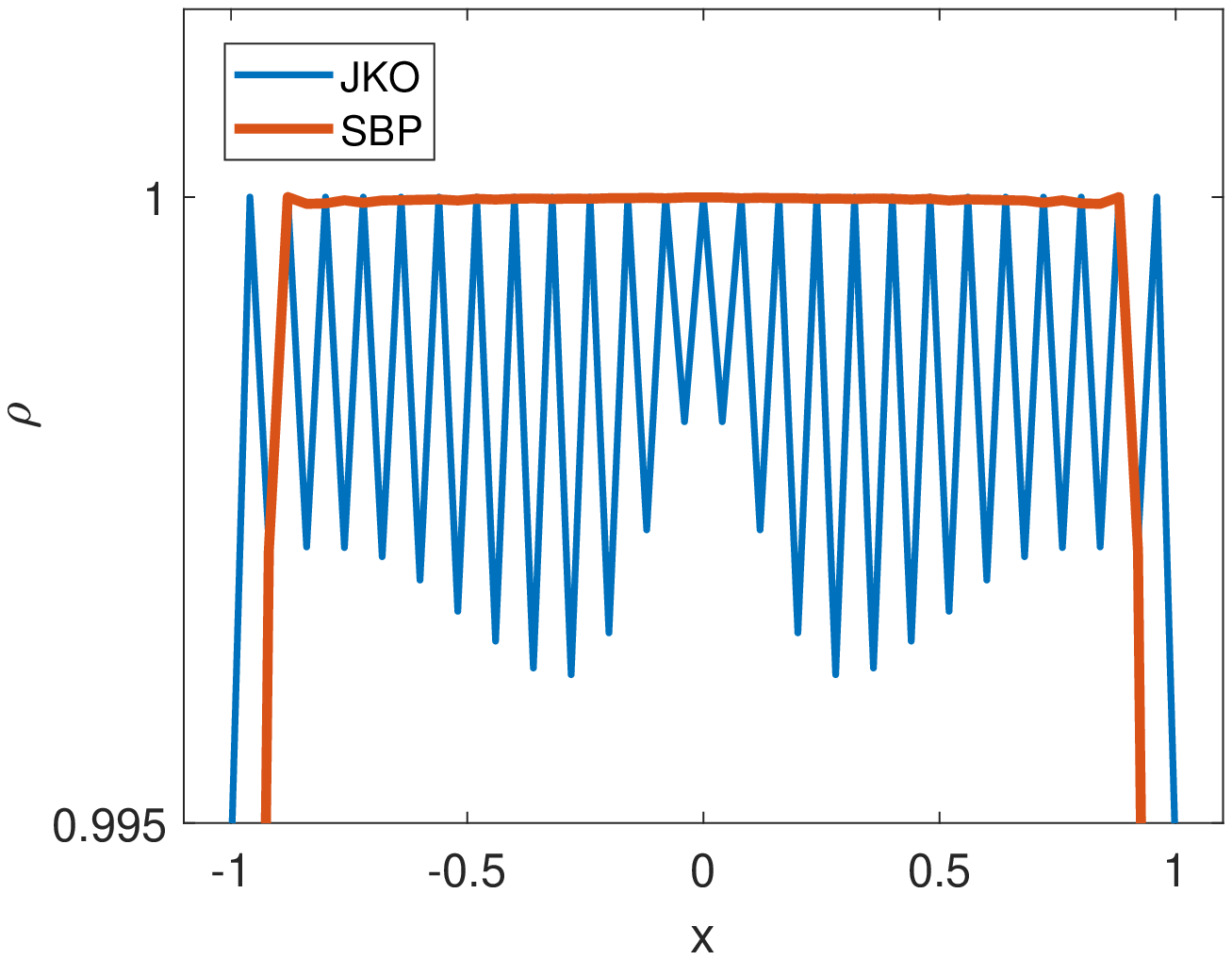}
	\caption{Left: Evolution of solutions for 1D Saturation experiment by Generalized Schr\"{o}dinger bridge scheme with $\Delta x=0.04$, $\tau=0.01$, $\eta_0$=80; Right: Comparison with Generalized dynamic JKO scheme.  }\label{fig:satuation_p2}
\end{figure}
\end{remark}

\subsection{1D Cahn-Hilliard equation} \label{sec:CH}
In the second example, we consider the Cahn-Hilliard equation
\begin{equation*}
\rho_t = \nabla\cdot\Big(M(\rho)\nabla(H'(\rho)-\epsilon^2 \Delta \rho)\Big),
\end{equation*}
with nonlinear mobility $M(\rho)=(1+\rho)(1-\rho)$. This is the model to study the phase separation in binary alloys, where $\rho$ is the difference of the mass density of the two components of the alloy. The corresponding energy is 
\begin{equation*}
\mathcal{E}(\rho)=\int_{\Omega} \Big(H(\rho)+\frac{\epsilon^2}{2}|\nabla \rho|^2\Big) \rd x,
\end{equation*}
where $H$ is either the Ginzburg-Landau double-well potential 
\begin{equation} \label{H-double}
H_{GL}(\rho)=\frac{1}{4}(\rho^2-1)^2,
\end{equation}
or the logarithmic potential $H_{log}(\rho)$
\begin{equation} \label{H-log}
H_{log}(\rho)=\frac{\theta}{2}\Big[(1+\rho)\ln\Big(\frac{1+\rho}{2}\Big)+(1-\rho)\ln\Big(\frac{1-\rho}{2}\Big)\Big]+\frac{\theta_c}{2}(1-\rho^2).
\end{equation}
The Dirichlet energy $\int_\Omega |\nabla \rho| \rd x$ is to penalize large gradients with strength $\epsilon^2$. 

The first test aims to verify  the order of accuracy to our scheme \ref{prob:JKO1D} using the analytical form of the steady state with a carefully chosen initial condition \cite{barrett1999finite}. Take the logarithmic potential $H_{log}$ with $\theta=0$ and $\theta_c=1$ and the Dirichlet energy with $\epsilon=0.1$, and the initial condition is set to be:
\begin{equation}\label{eq:CH_1d_conv_ini}
\rho_0(x) = \begin{dcases}
	\cos\Big(\frac{x-1/2}{\epsilon}\Big)-1 & \text{if $|x-\frac{1}{2}|\leq\frac{\pi\epsilon}{2}$ }, \\
	-1 & \text{otherwise}.
	\end{dcases}
\end{equation}
The corresponding steady state is given by 
\begin{equation}\label{eq:CH_1d_conv_equil}
\rho_\infty(x) = \begin{dcases}
	\frac{1}{\pi}\Big[1+\cos\Big(\frac{x-1/2}{\epsilon}\Big)\Big]-1 & \text{if $|x-\frac{1}{2}|\leq\frac{\pi\epsilon}{2}$ }, \\
	-1 & \text{otherwise}.
	\end{dcases}
\end{equation}
The simulation results and the convergence with respect to the spacial discretization $\Delta x$ are shown in Fig.~\ref{fig:CH_converg}. In particular, we observe a second-order convergence in space for our fully discrete scheme by plotting the error between our numerical solution $\rho^*=\rho(x,1)$ at $t=1$ and the analytical solution for steady state $\rho_\infty$ in the $l_2$ norm:
$$\|\rho^*-\rho_\infty\|_2=\sqrt{\sum_i |\rho^*(x_i)-\rho_\infty(x_i)|^2\Delta x}.$$
\begin{figure}[H]
	\includegraphics[width=0.329\textwidth]{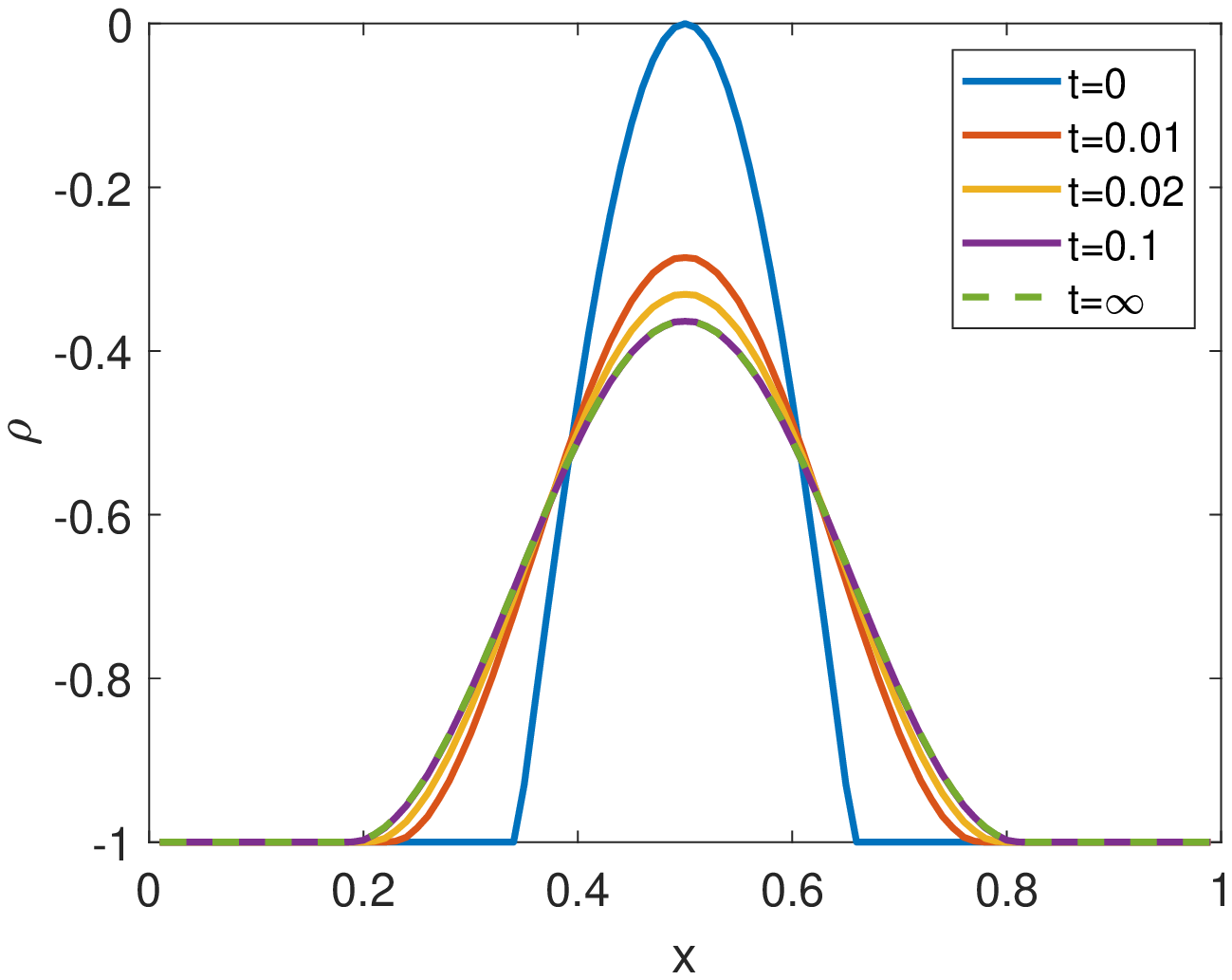}
	\includegraphics[width=0.329\textwidth]{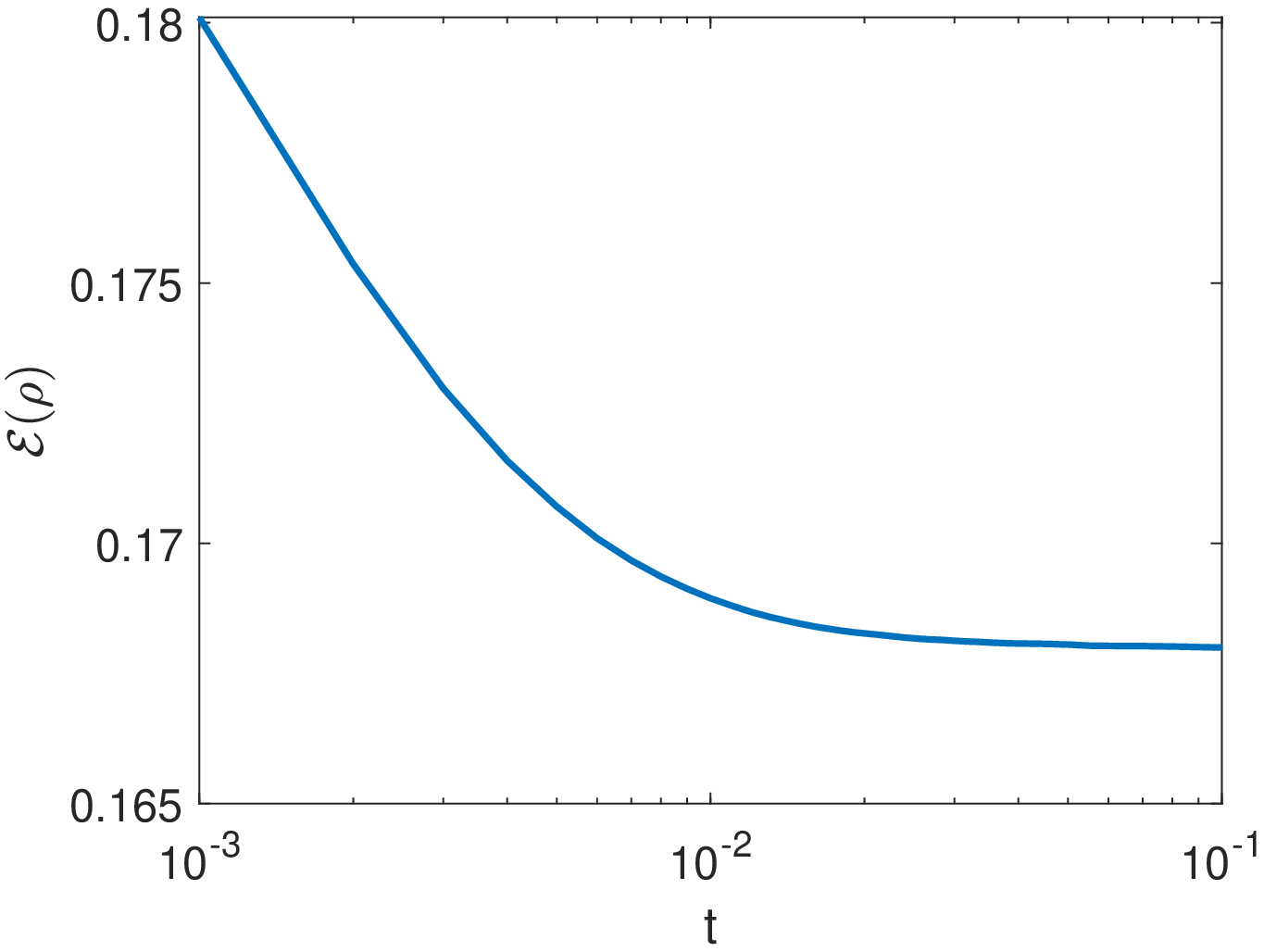}
	\includegraphics[width=0.329\textwidth]{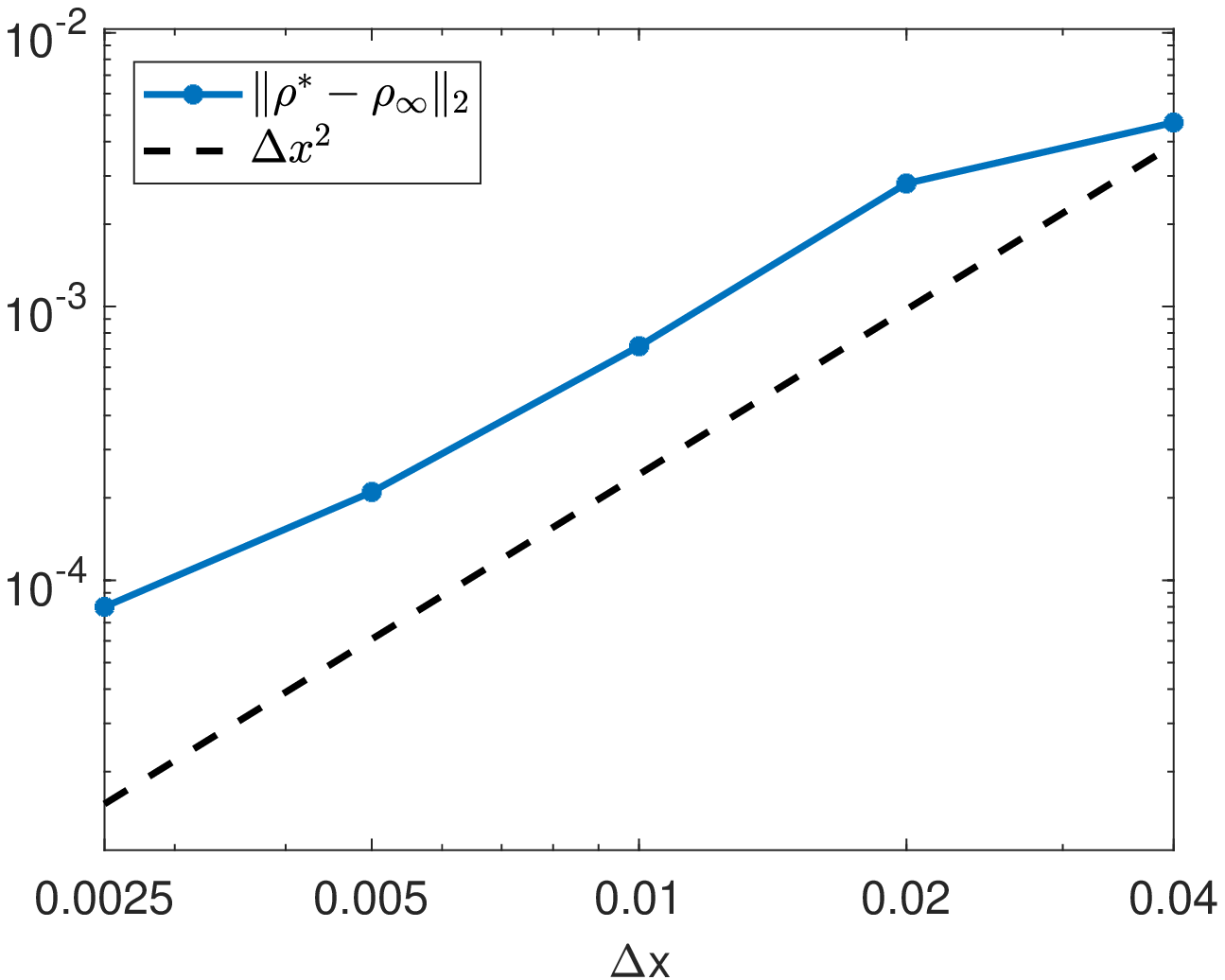}	
	\caption{Evolution for the 1D Cahn-Hilliard equation with logarithmic potential $H_{log}=(1-\rho^2)/2$. Left: evolution of $\rho(x,t)$. $\Delta x=0.01$, $\tau=0.001$. Center: Free energy decay. Right: Convergence to exact steady solution for various $\Delta x$.  }\label{fig:CH_converg}
\end{figure}

As a second test, we consider the logarithmic potential \eqref{H-log} with $\theta=0.3$ and $\theta_c=1$ and the Dirichlet energy with $\epsilon=\sqrt{10^{-3}}$. With the initial condition given by 
\begin{equation*} 
\rho_0(x) = \begin{dcases}
	1 & \text{if $0\leq x\leq \frac{1}{3}-\frac{1}{20}$ }, \\
	20\big(\frac{1}{3}-x\big) & \text{if $\big|x-\frac{1}{3}\big|\leq \frac{1}{20}$}, \\
	-20\big|x-\frac{41}{50}\big| & \text{if $\big|x-\frac{41}{50}\big|\leq \frac{1}{20}$}, \\
	-1 & \text{otherwise}\,,
	\end{dcases}
\end{equation*}
the evolution of the solution $\rho(x,t)$ and the free-energy $\mathcal{E}(t)$ are displayed in Fig.~\ref{fig:CH_log2}.

\begin{figure}
	\includegraphics[width=0.49\textwidth]{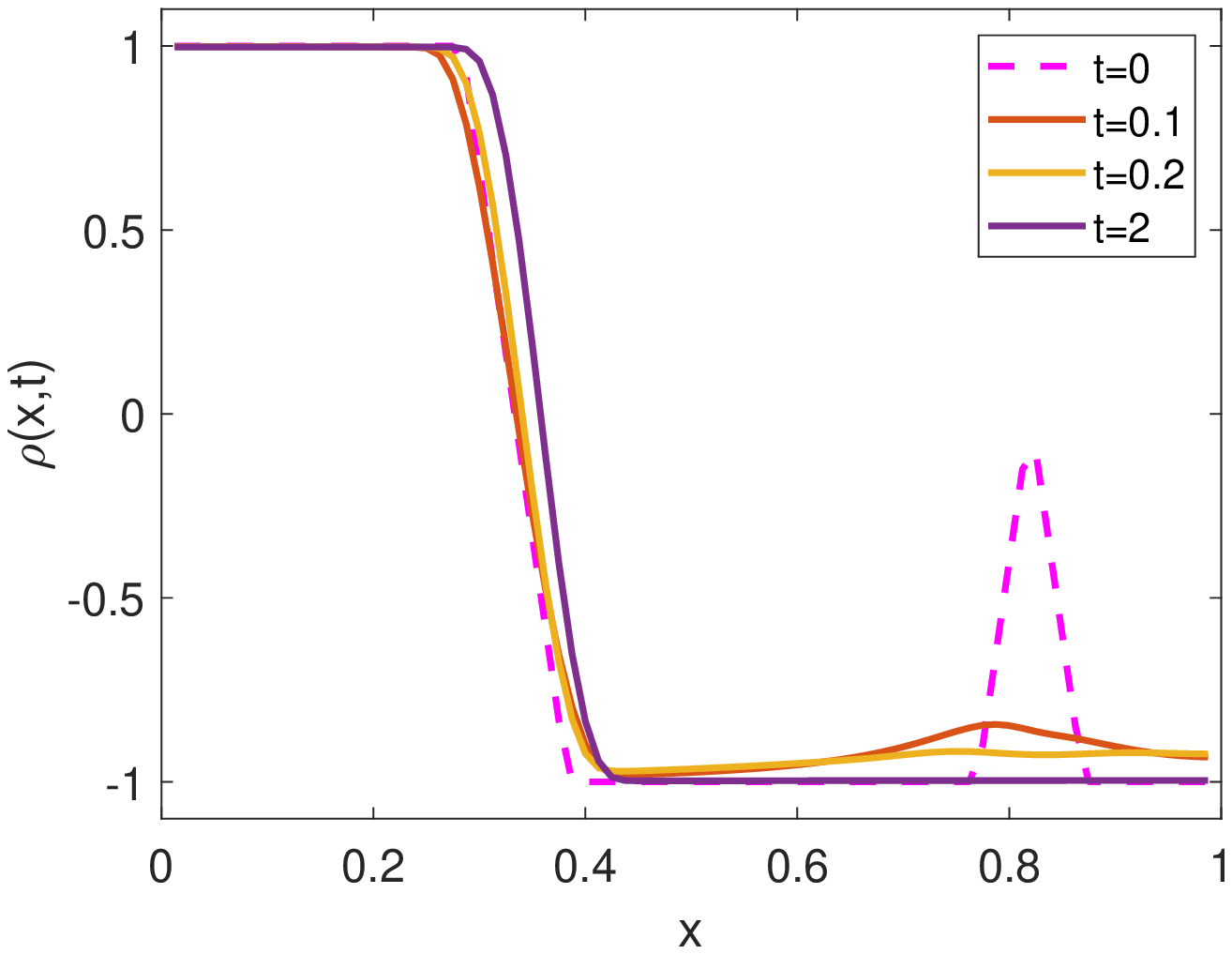}
	\includegraphics[width=0.49\textwidth]{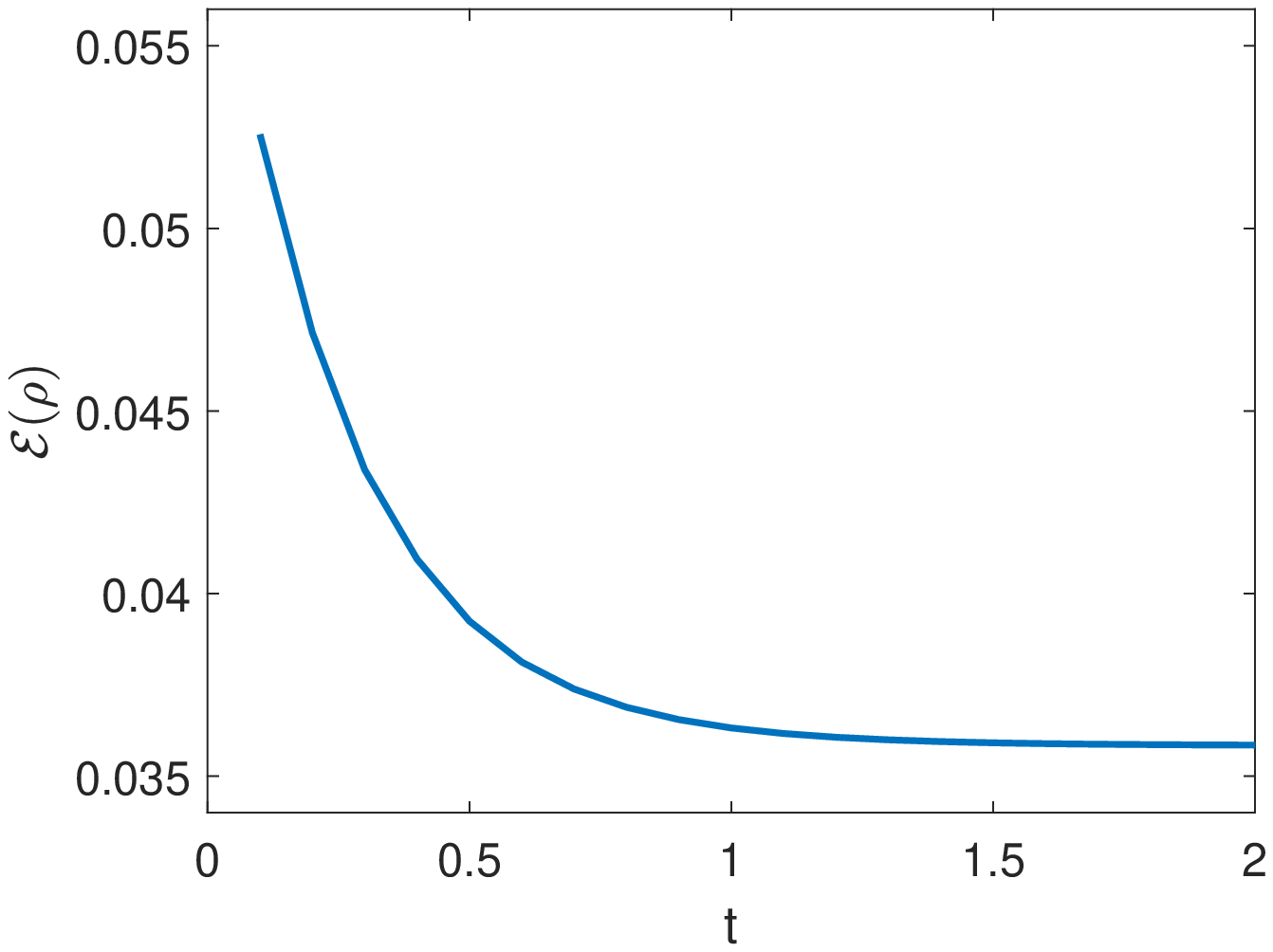}
	\caption{Evolution for the 1D Cahn-Hilliard equation with logarithmic potential, where the parameters are $\theta=0.3$, $\theta_c=1$ and $\epsilon=\sqrt{10^{-3}}$. Left: evolution of $\rho(x,t)$. $\Delta x=0.0125$, $\tau=0.1$. Right: Free energy decay. }\label{fig:CH_log2}
\end{figure}

In the third test, we examine the phase separation with emerging clusters at $\rho = \pm 1$ during temporal evolution, and its dependence on the choice of potential. The initial phase-field is taken as a randomized field such that the local value of $\rho(x,t=0)$ that follows uniform distribution in $[-0.5,0.5]$. The spatial domain is $[-40,40]$ and is discretized uniformly with $\Delta x =0.4$. The results, with both logarithmic potential ($\theta=0.3$ and $\theta_c=1$) and double-well potential, and $\epsilon=1$ are collected in Fig.~\ref{fig:CH_log3}. In both cases, an initial phase separation is observed followed by coarsening process with merging phases. The middle column represent the zoom-in plot of the phase field solution at $t=100$, where it is shown that with logarithmic potential, a plateau forms at the local maximum and minimum of $\rho$ and is connected by a sharper transition than the double well potential.  

\begin{figure}
	\includegraphics[width=0.329\textwidth]{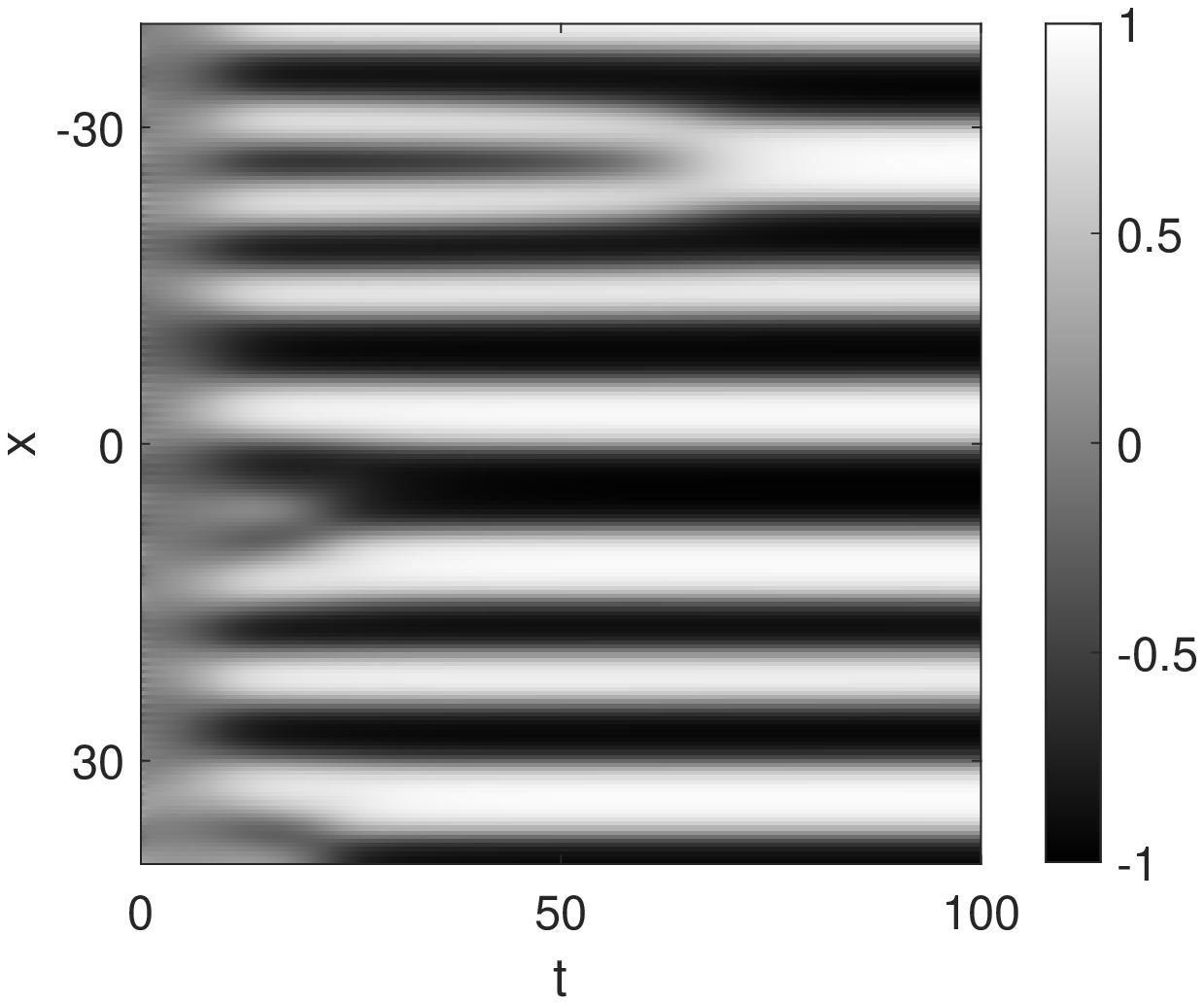}
	\includegraphics[width=0.329\textwidth]{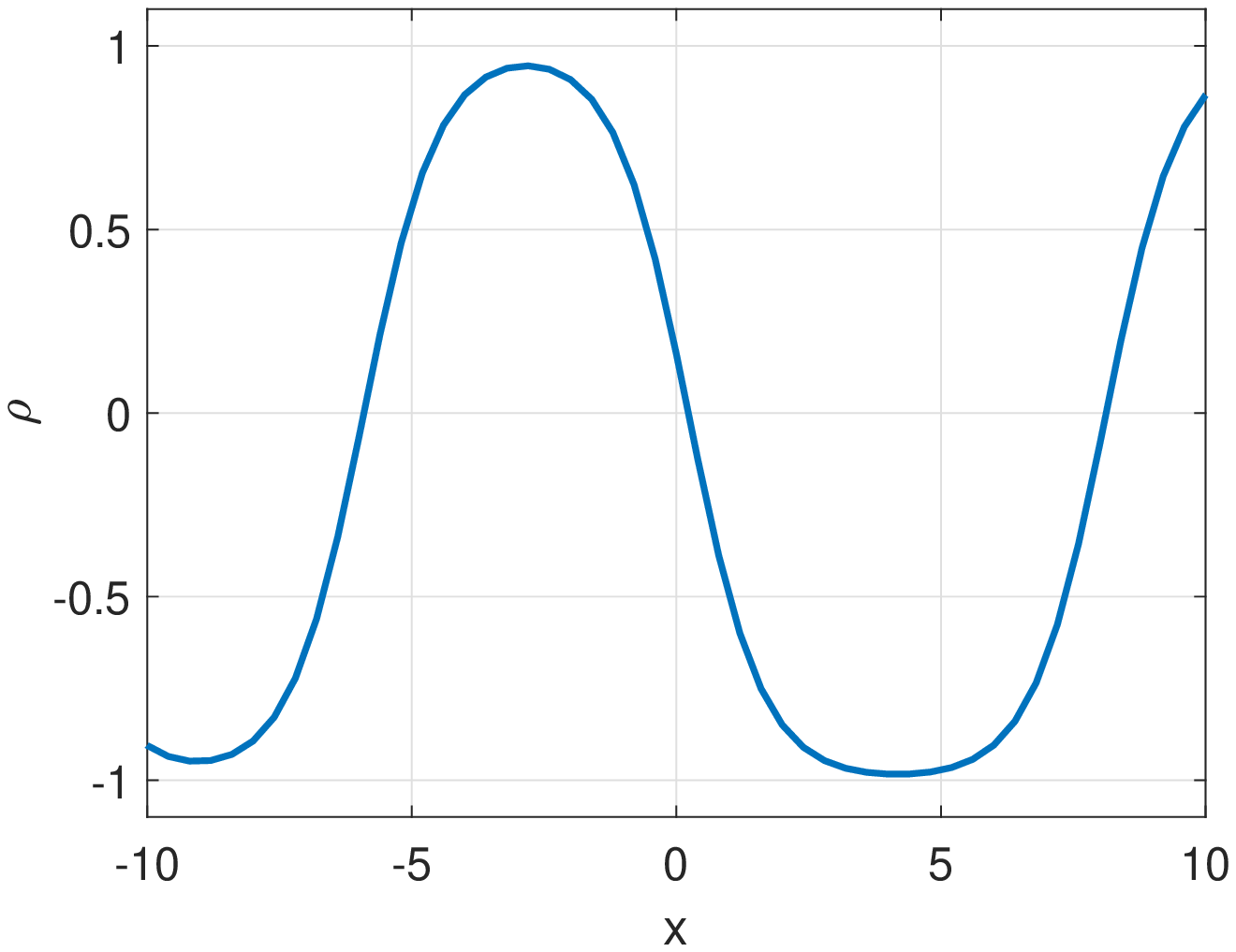}
	\includegraphics[width=0.329\textwidth]{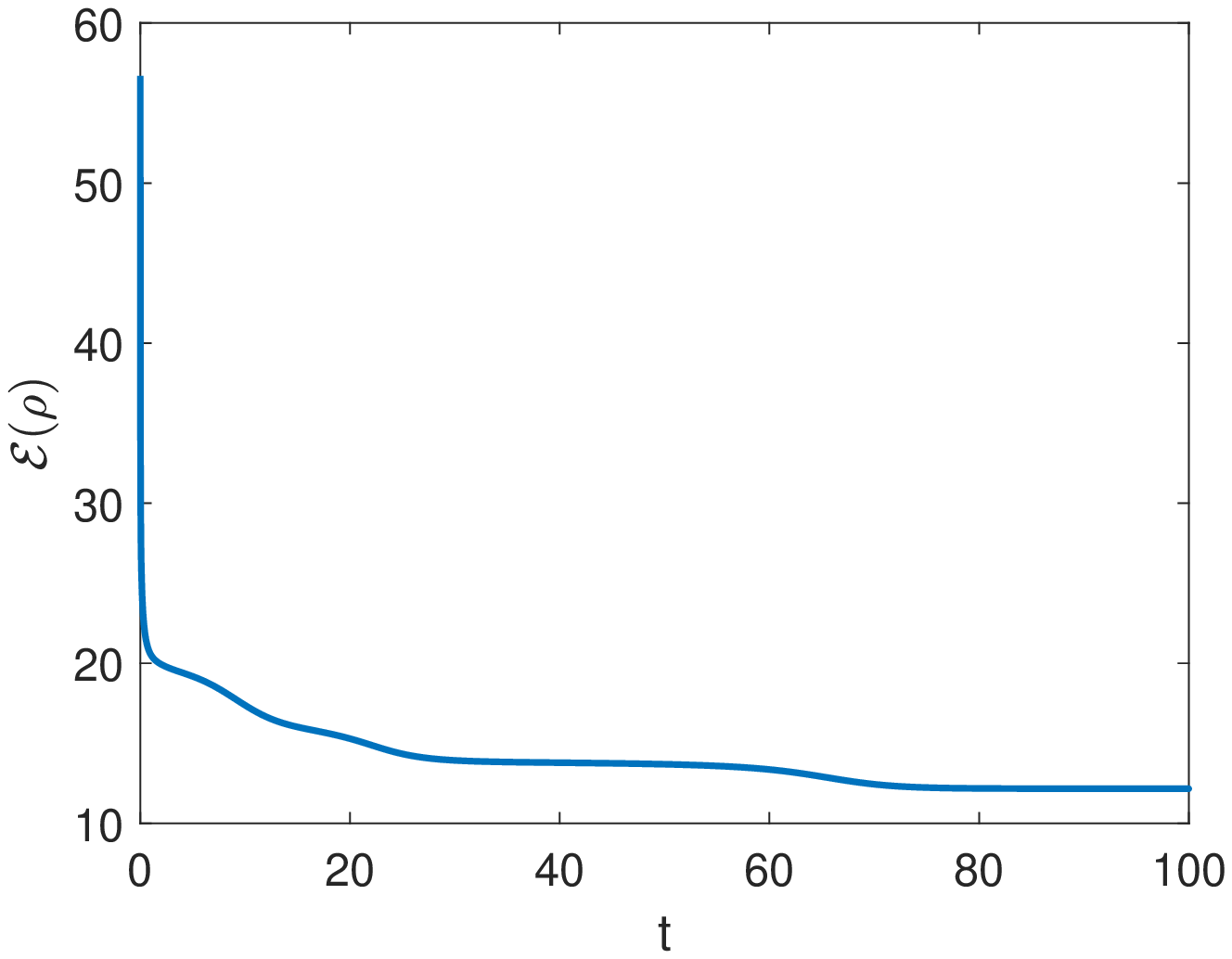}\\
	\includegraphics[width=0.329\textwidth]{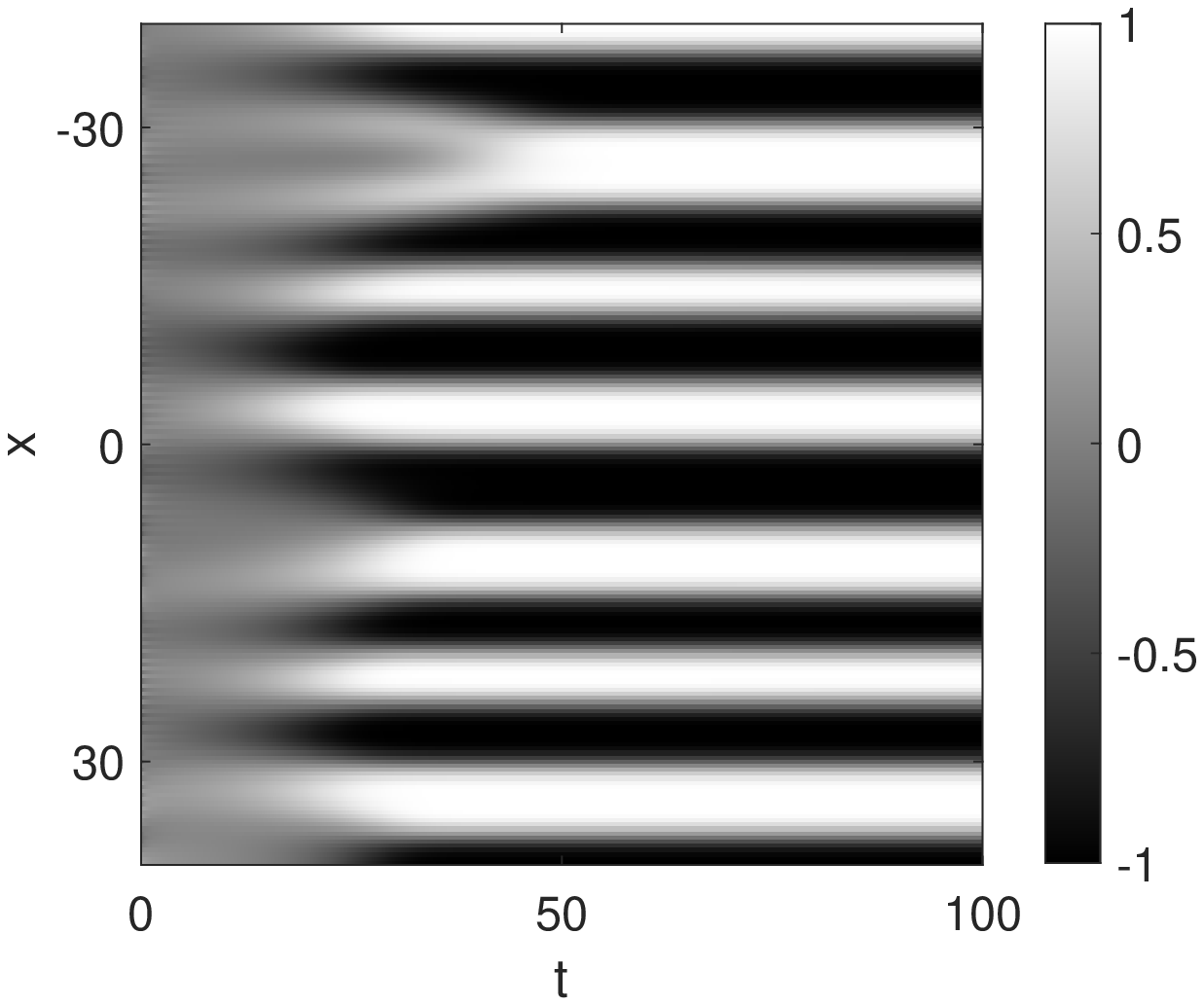}
	\includegraphics[width=0.329\textwidth]{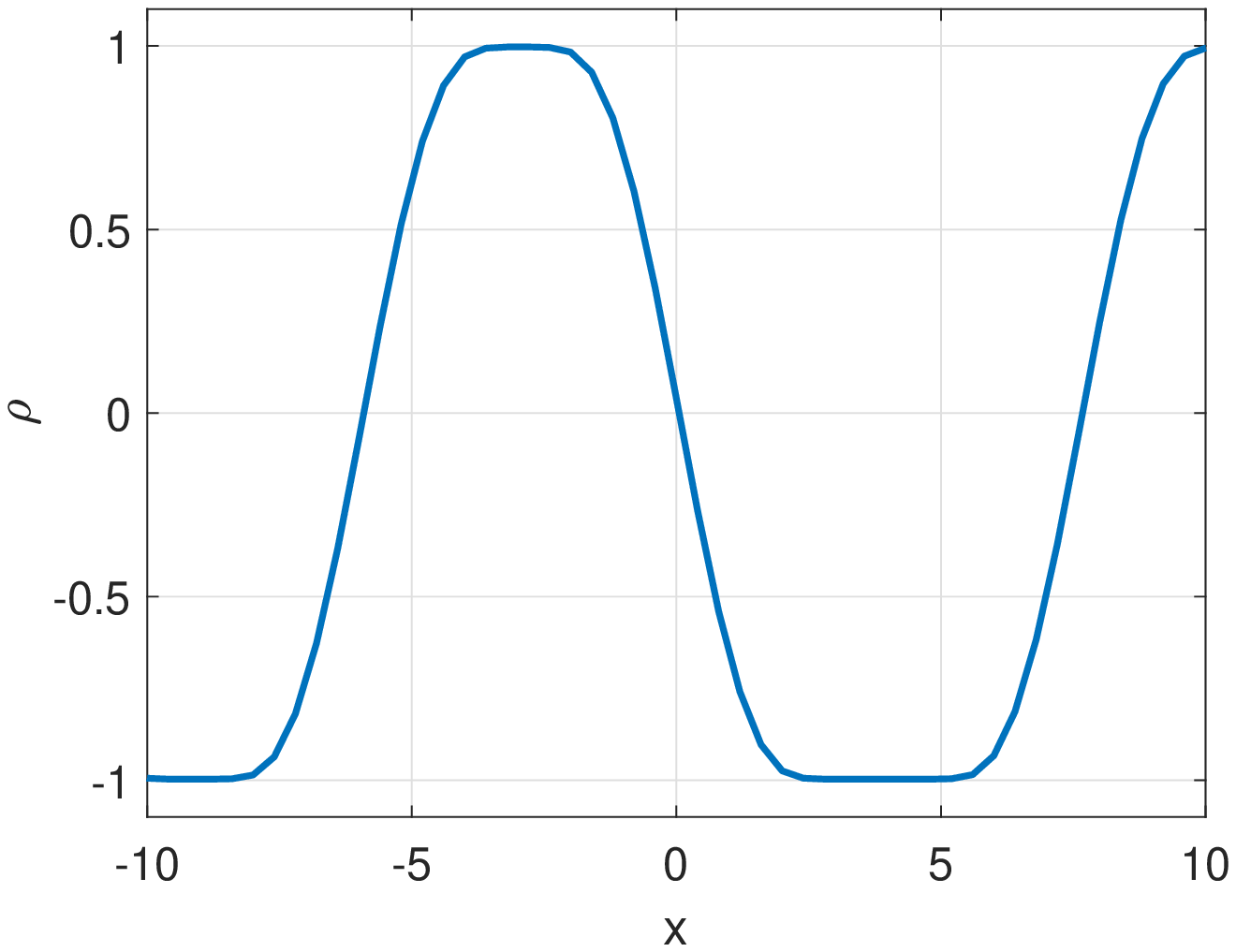}
	\includegraphics[width=0.329\textwidth]{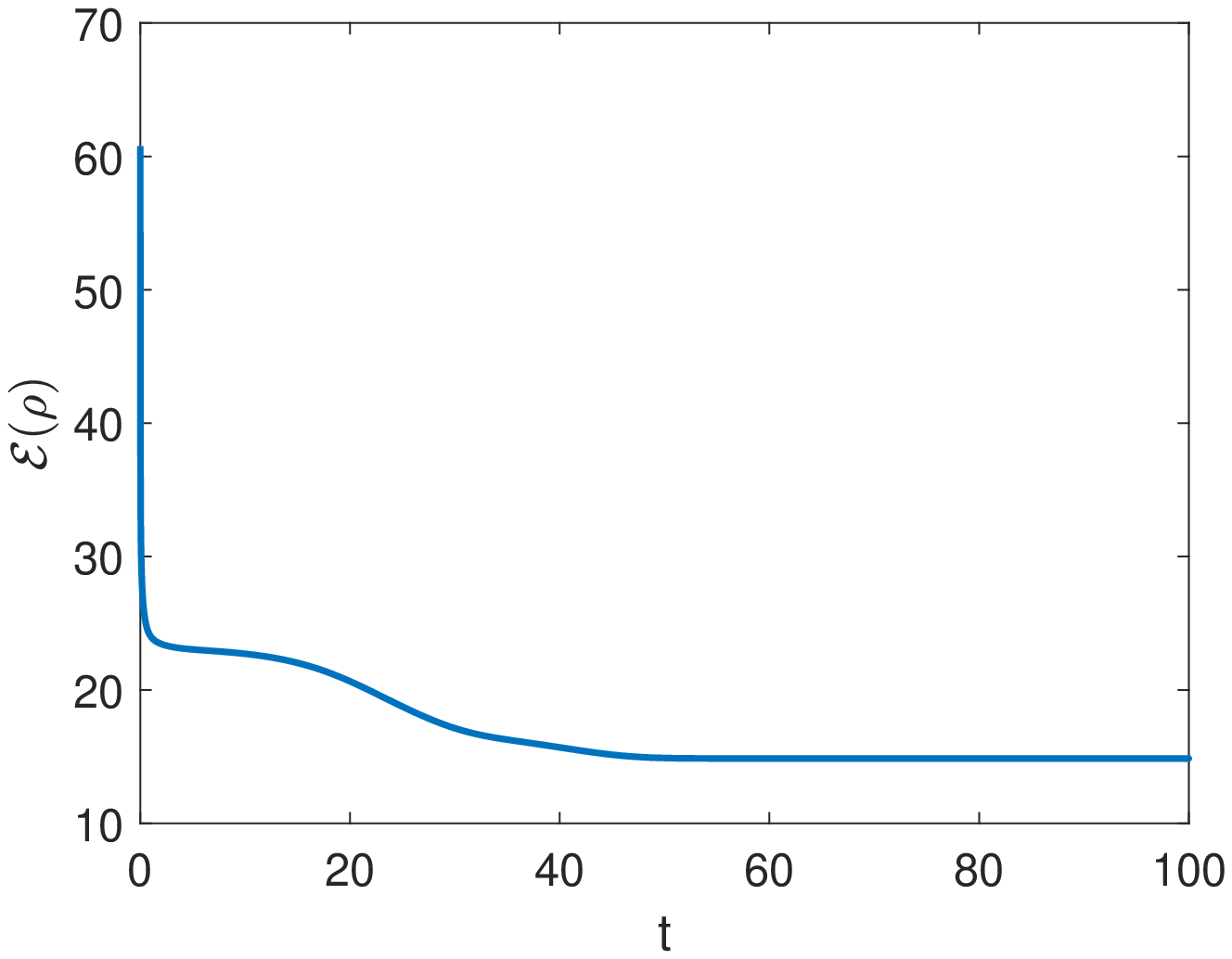}
	\caption{Evolution for the 1D Cahn-Hilliard equation with logarithmic potential ($\theta=0.3$, $\theta_c=1$) (top row) and double-well potential (bottom row), and $\epsilon=1$. Left: evolution of $\rho(x,t)$. $\Delta x=0.4$, $\tau=0.01$. Center: Zoom in phase-field solution at $t=100$. Right: Free energy decay. }\label{fig:CH_log3}
\end{figure}

\subsection{2D Cahn-Hilliard equation}
As with the 1D Cahn-Hilliard equation case, we first test the order of convergence by applying the logarithmic potential $H_{log}=(1-\rho^2)/2$ with $\theta=0$ and $\theta_c=1$ and the Dirichlet energy with $\epsilon=0.1$. The initial conditions are set as in (\ref{eq:CH_1d_conv_ini}) and the corresponding steady state is given in (\ref{eq:CH_1d_conv_equil}). The evolution and convergence results are shown in Fig.~\ref{fig:CH_2D_log1}. Again, We observe a second-order convergence in space for our fully discrete scheme. 
\begin{figure}[H]
	\centering
	\includegraphics[width=0.4\textwidth]{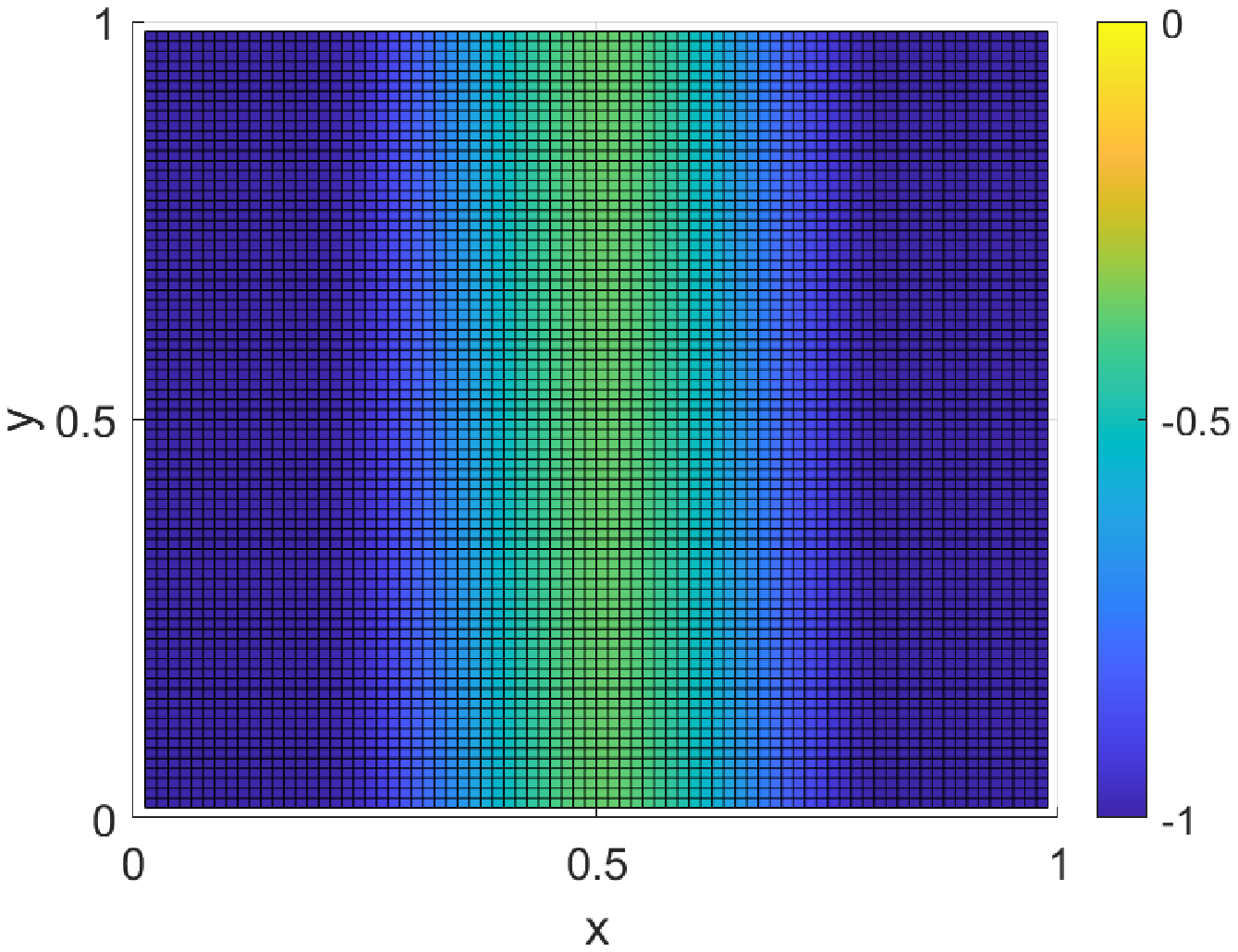}
	\includegraphics[width=0.4\textwidth]{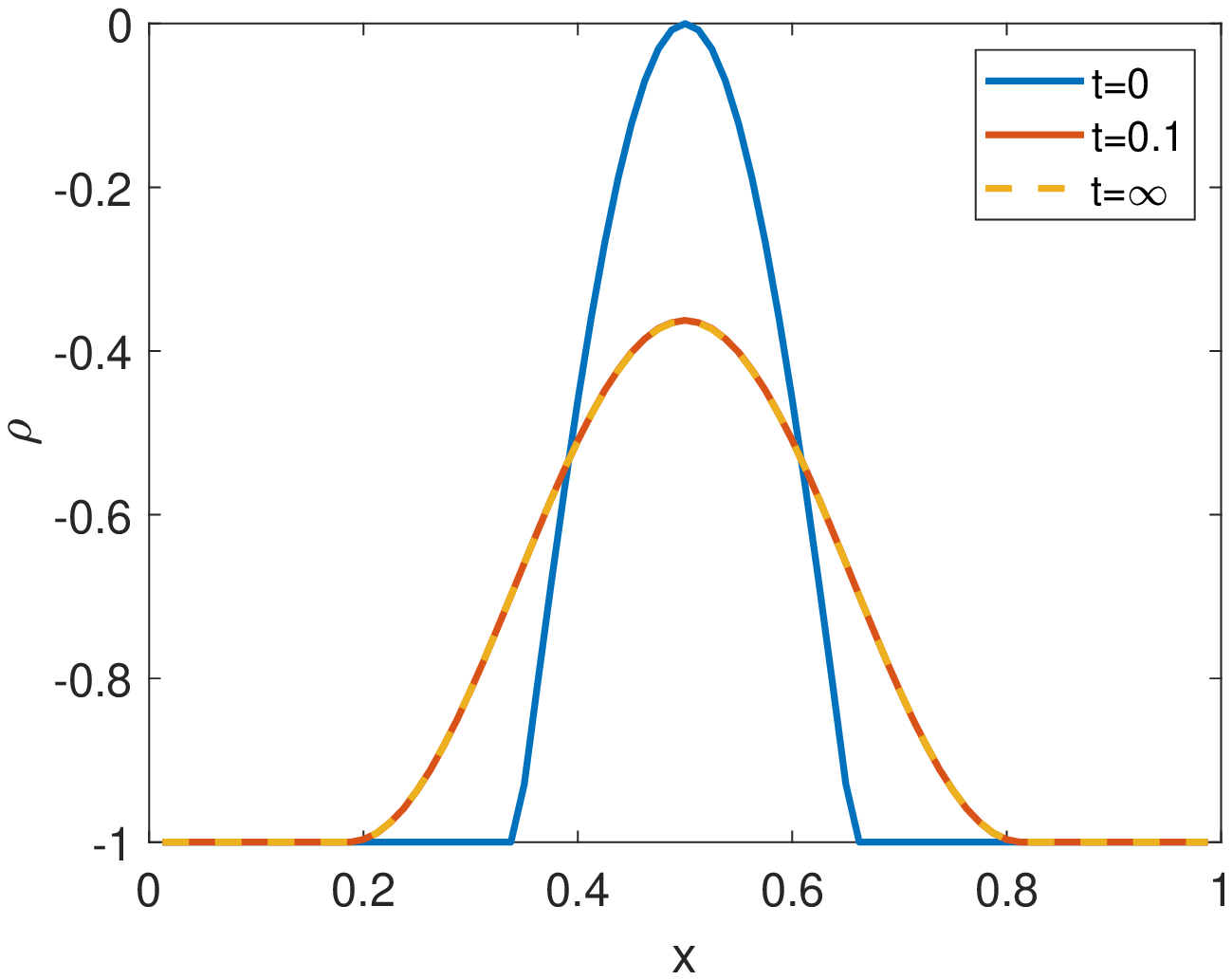}\\
	\includegraphics[width=0.4\textwidth]{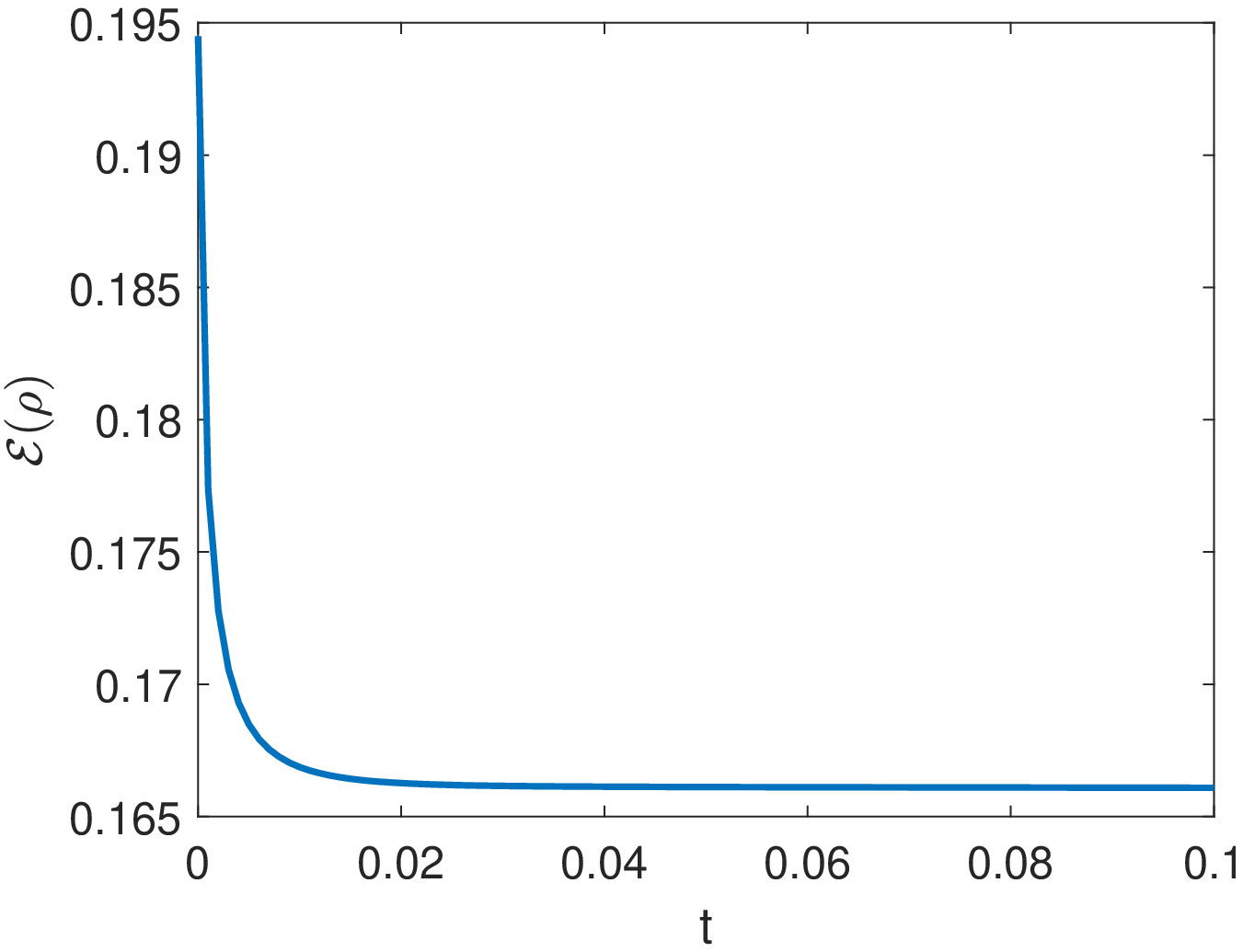}
	\includegraphics[width=0.4\textwidth]{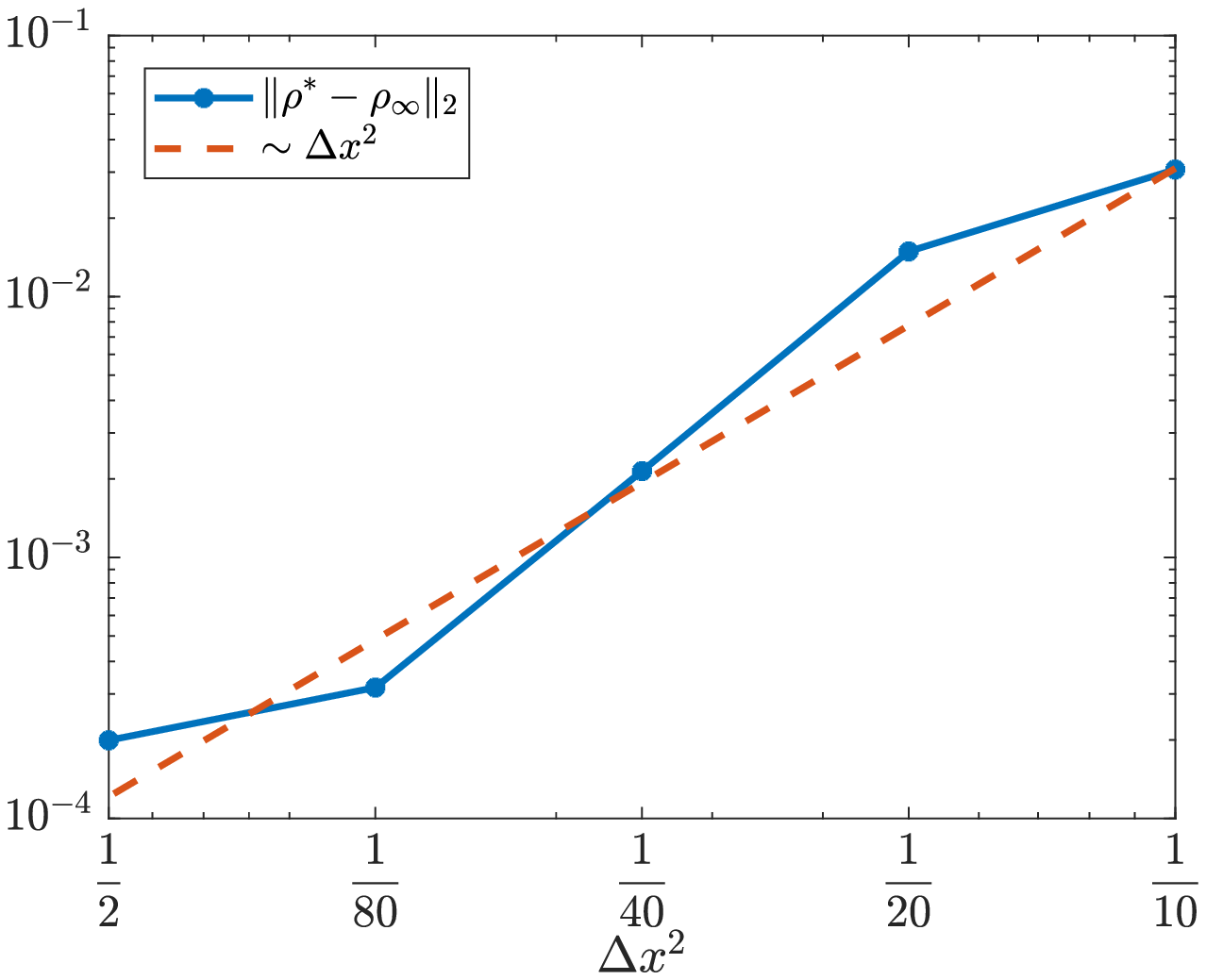}
	\caption{Evolution for the 2D Cahn-Hilliard equation with logarithmic potential ($\theta=0$, $\theta_c=1$, $\epsilon=0.1$). Top Left: evolution of $\rho(x,t)$. $\Delta x=0.0125$, $\tau=0.01$. Top Right: Sideview of phase-field solution. Bottom Left: Free energy decay. Bottom Right: Convergence to exact steady solution for various $\Delta x$. }\label{fig:CH_2D_log1}
\end{figure}

We also simulate the phase separation with the double-well potential, and $\epsilon=0.018$. The initial phase-field is taken as a randomized field such that the local value of $\rho(x,t=0)=-0.4+r$, where $r$ follows uniform distribution in $[-0.1,0.1]$. We compute the evolution of phase-field solutions in a domain of $[0,1]\times[0,1]$ with 64 and 128 cells. The temporal evolution of the phase-field solutions are almost identical for different mesh choices, as shown in Fig.~\ref{fig:CH_2D_separation_rho}. The free energy evolution for different mesh choices and time-steps is shown in Fig.~\ref{fig:CH_2D_separatin_energy} to confirm that our simulations are indeed convergent to the real solutions. 

For the simulation with random initial condition, the original PD3O (Algorithm~\ref{alg:PDHG}) converges slowly for each time-step at the early stage due to the randomness. Instead, we implement the PrePD3O (Algorithm~\ref{alg:PrePDHG}) for faster convergence. To compare the convergence for two primal-dual algorithms, we plot the convergence monitors v.s. iteration number for one time-step until they achieve the same stopping criteria with TOL$=10^{-5}$ in Fig.~\ref{fig:PD_ADMM}. We observe a much faster convergence rate for PrePD3O reaching the stopping criteria with around 1100 iterations, while PD3O needs more than 180000 iterations.  

\begin{figure}[H]
	\centering
	\includegraphics[width=0.4\textwidth]{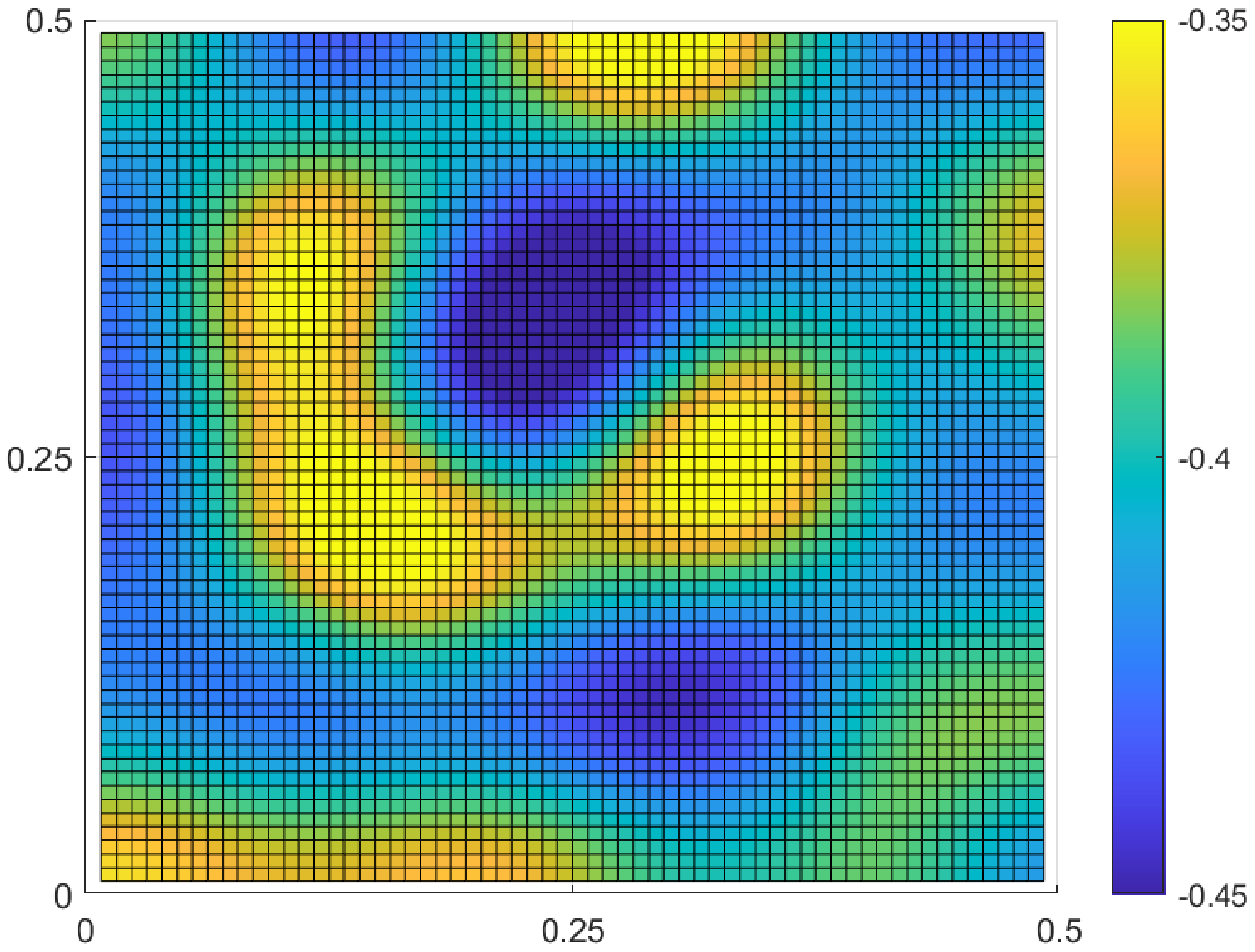}	\includegraphics[width=0.4\textwidth]{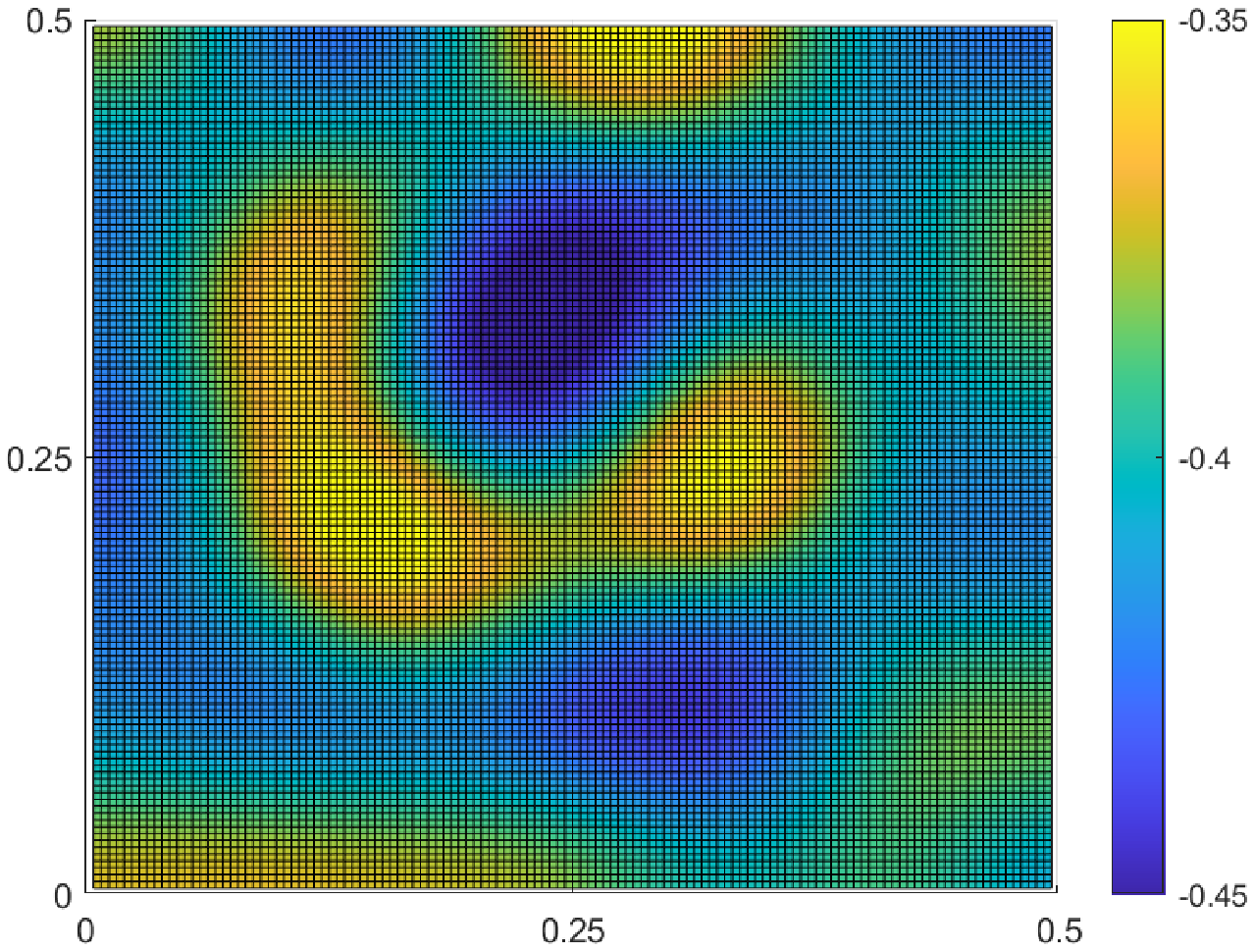}\\
	\includegraphics[width=0.4\textwidth]{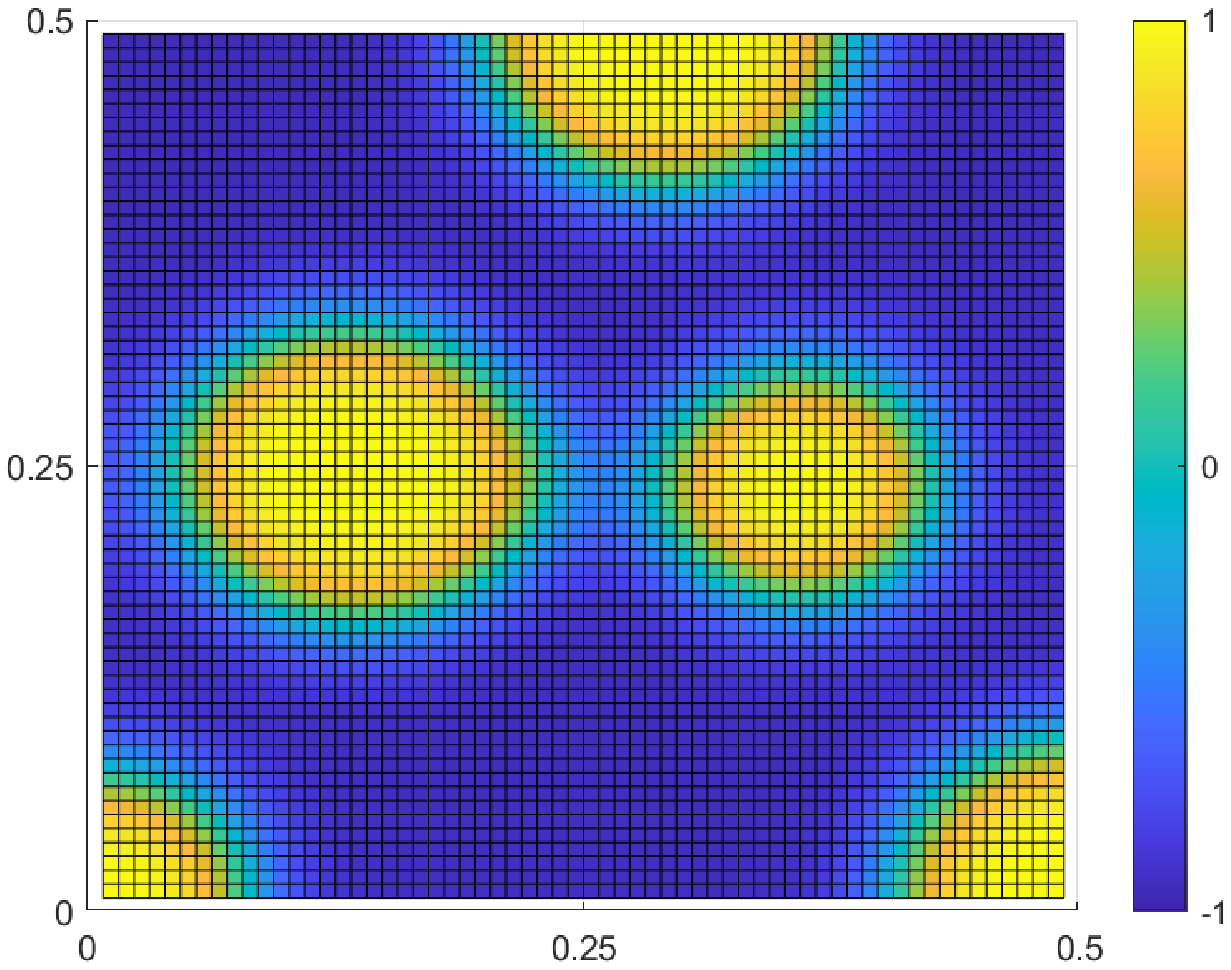}
	\includegraphics[width=0.4\textwidth]{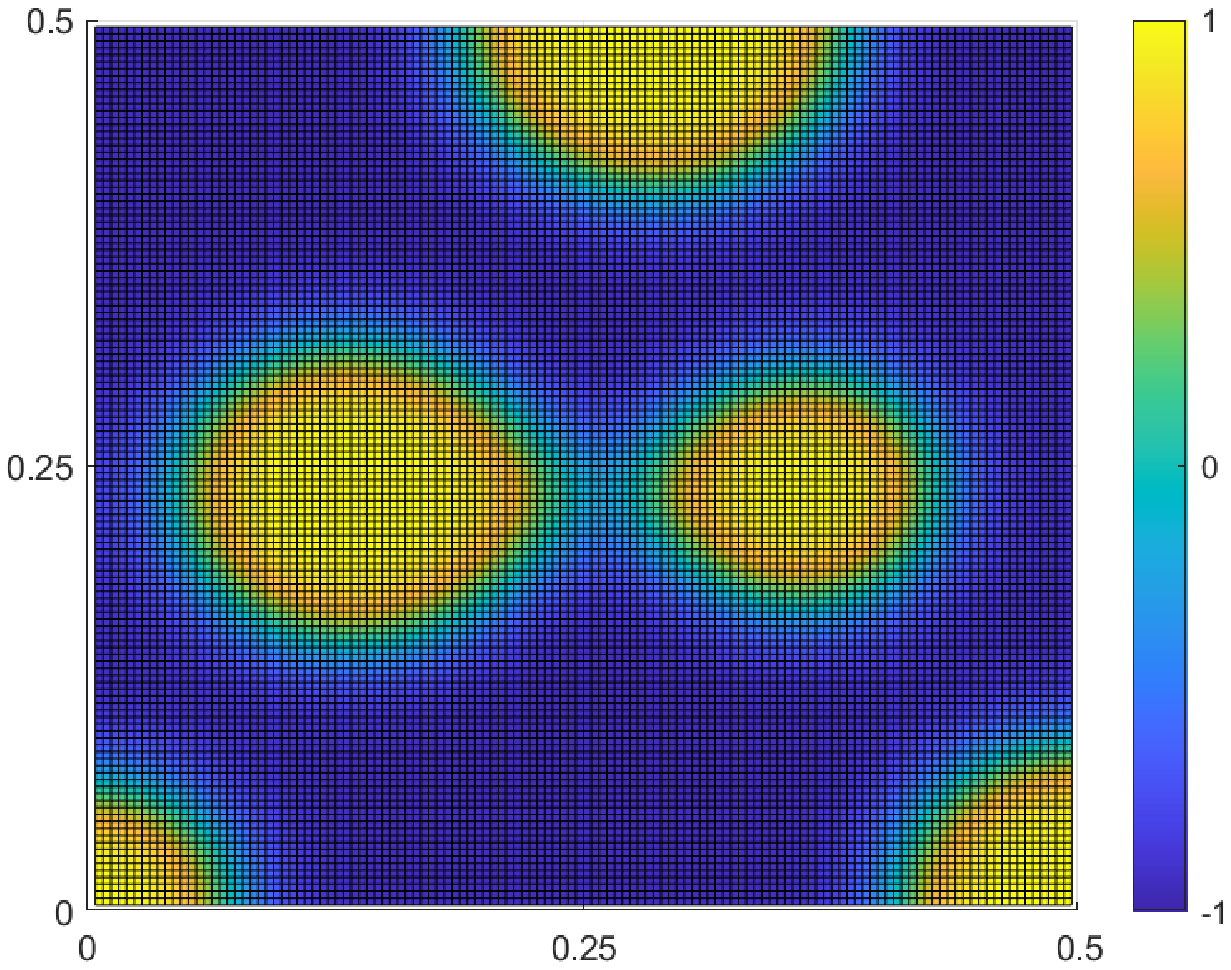}\\
	\includegraphics[width=0.4\textwidth]{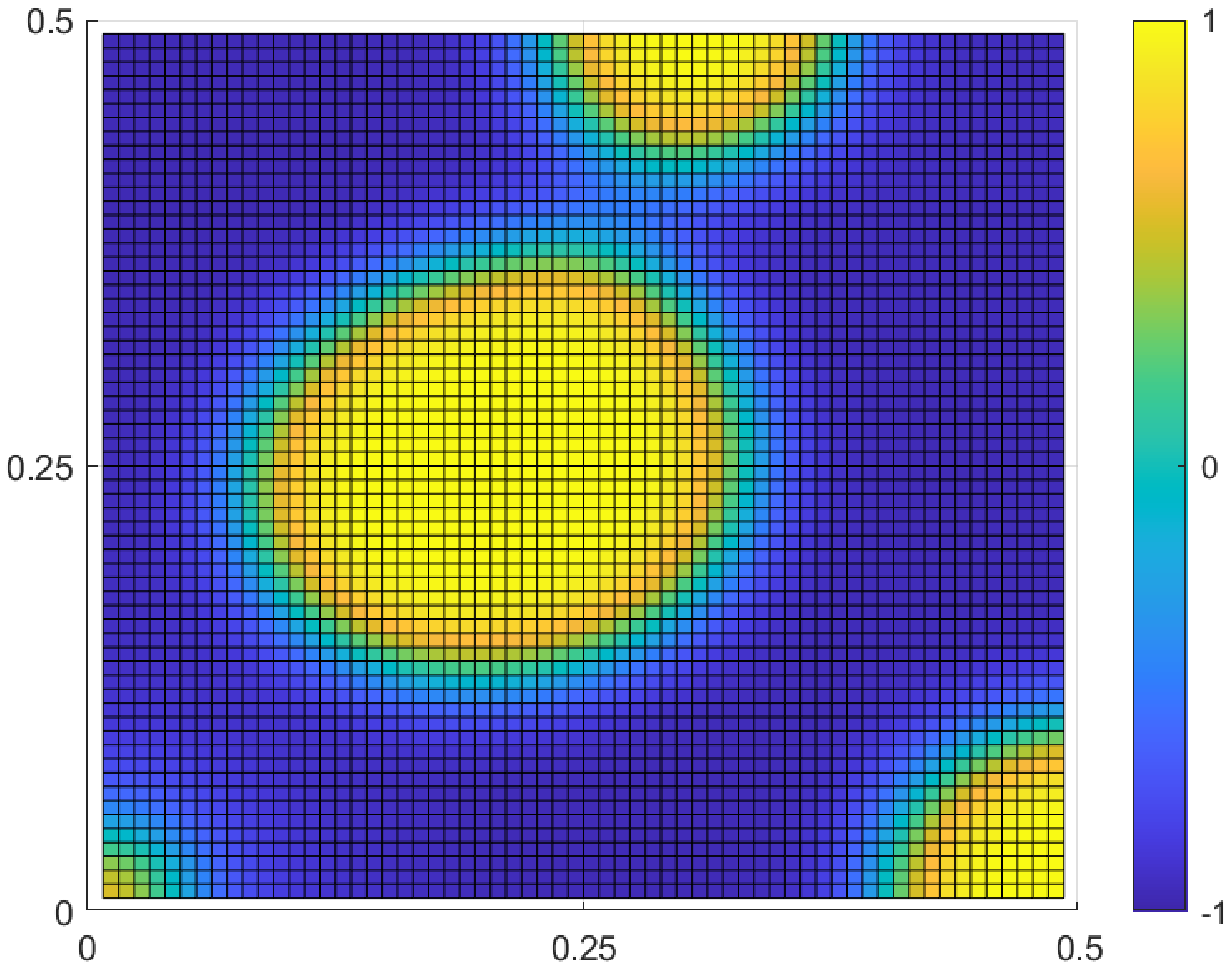}
	\includegraphics[width=0.4\textwidth]{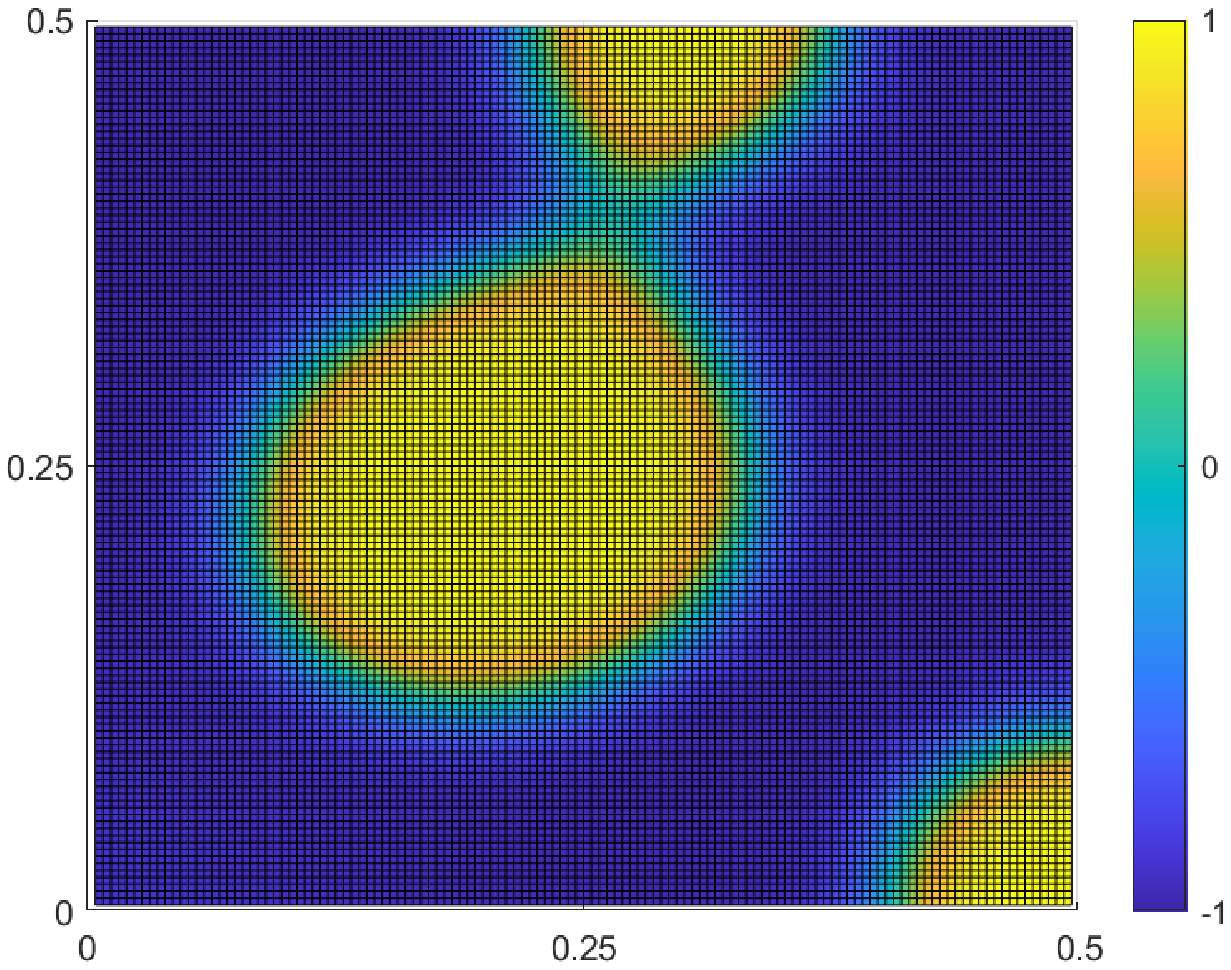}\\
	\includegraphics[width=0.4\textwidth]{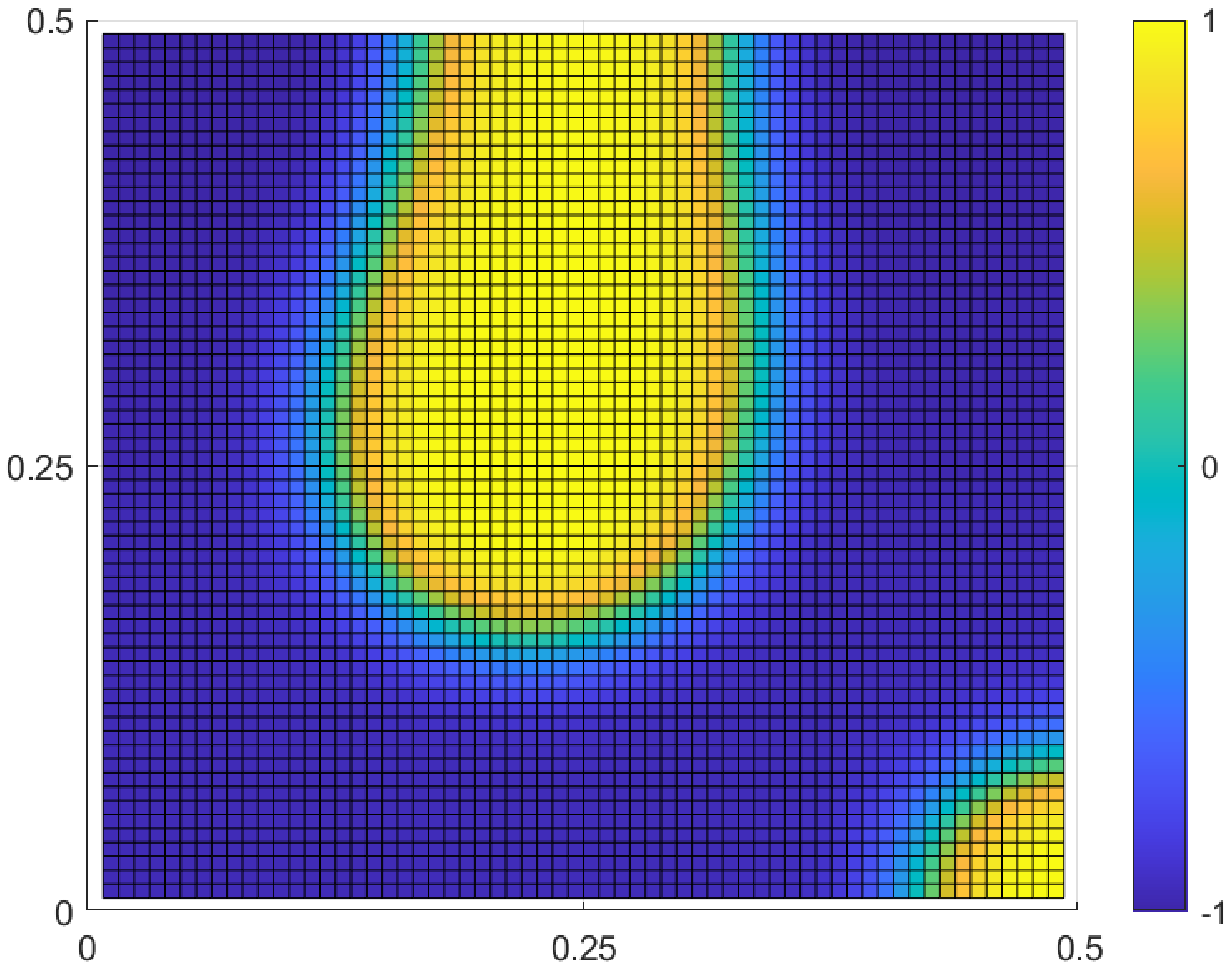}
	\includegraphics[width=0.4\textwidth]{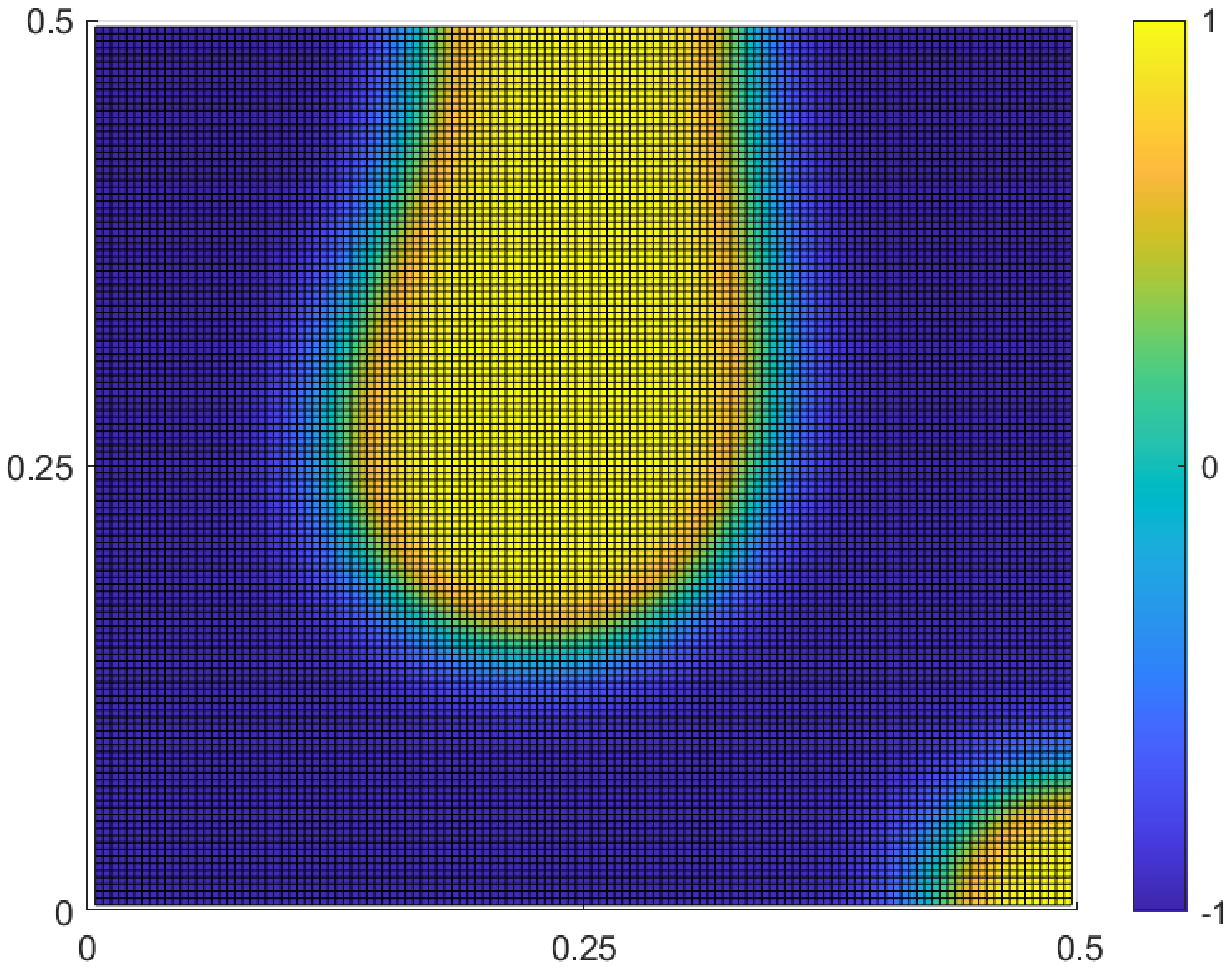}\\
	\caption{Phase separation of the initially randomized phase-field for the 2D Cahn-Hilliard equation with double-well potential and $\epsilon=0.018$. Left column: computed by $\Delta x=\Delta y=1/64$. Right column: computed by $\Delta x=\Delta y=1/128$. From top to bottom: solution at $t=0.01,0.2,0.4,1$, where $\tau=0.001$.}\label{fig:CH_2D_separation_rho}
\end{figure}

\begin{figure}[H]
\centering
	\includegraphics[width=0.6\textwidth]{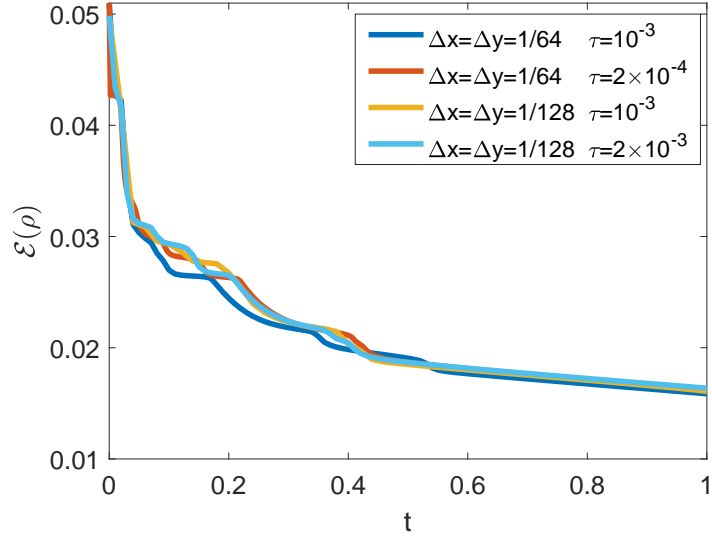}
	\caption{Free energy decay in time for the 2D Cahn-Hilliard equation with double-well potential and $\epsilon=0.018$. We compare the temporal evolution of free energy by simulations with different $\Delta x$, $\Delta y$ and $\tau$. }\label{fig:CH_2D_separatin_energy}
\end{figure}

\begin{figure}[H]
	\centering
	\includegraphics[width=0.495\textwidth]{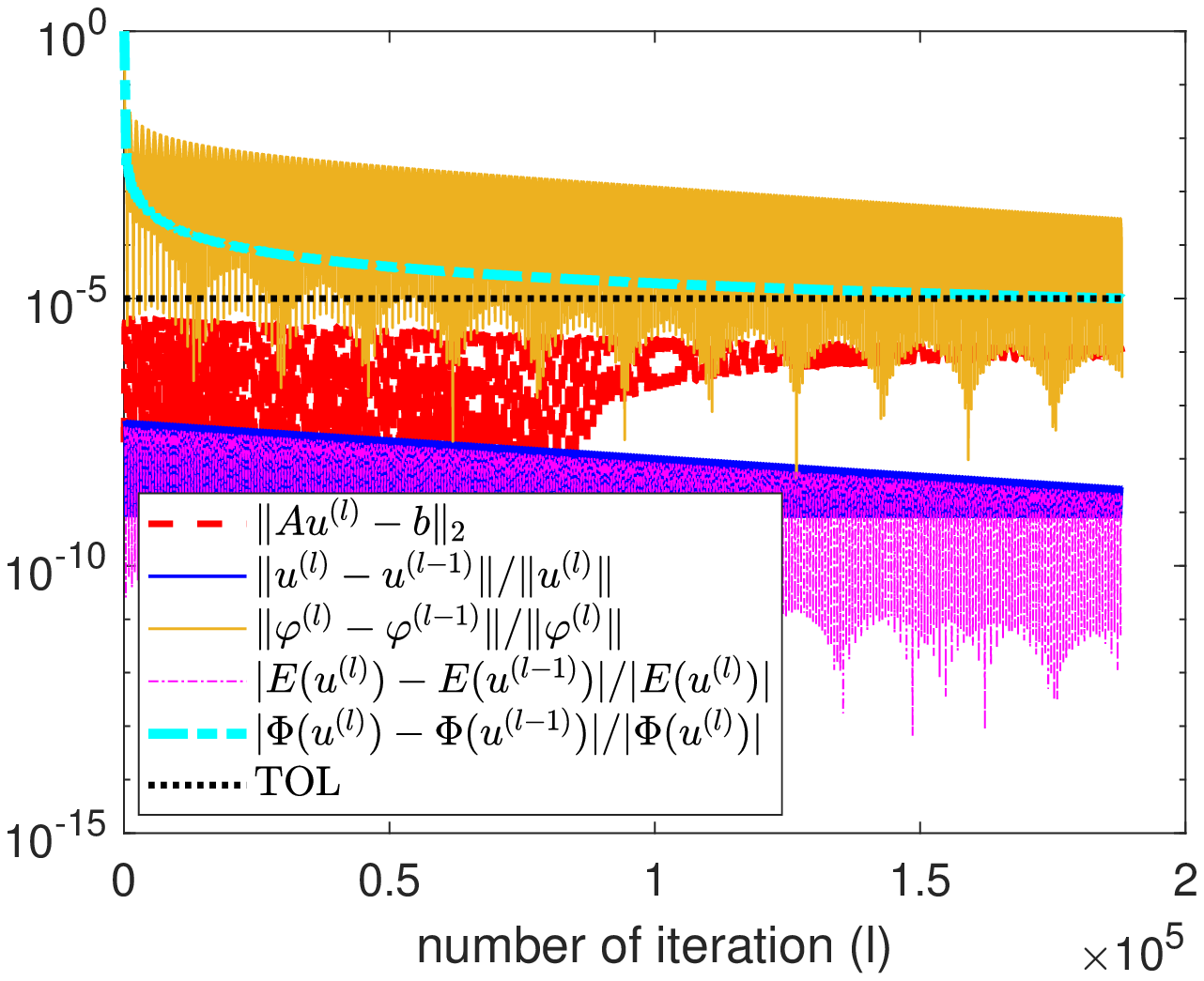}
        \includegraphics[width=0.495\textwidth]{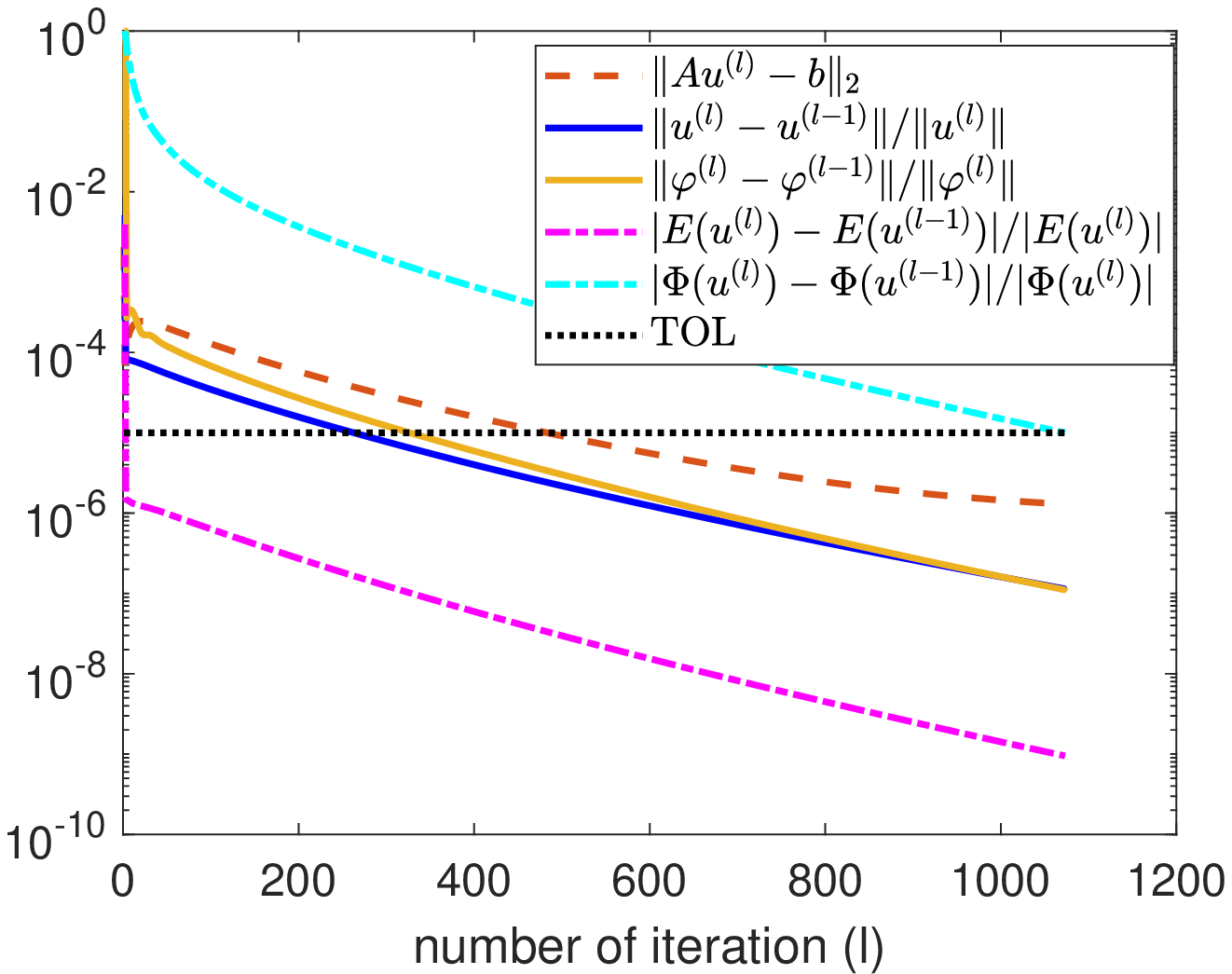}
	\caption{Comparison of convergence rate of two algorithms PD3O and PrePD3O for computing one JKO step of simulations in Fig.~\ref{fig:CH_2D_separation_rho}. We take $\Delta x=\Delta y=1/64$, $\lambda=0.001$ (and $\sigma=0.99/(\lambda \lambda_{max}(AA^{\text{T}})$) for PD3O and $\lambda=50$ for PrePD3O, where the values of $\lambda$ are chosen for the ``optimal" convergence rate of each algorithm.}\label{fig:PD_ADMM}
\end{figure}

\subsection{Wetting phenomenon of droplets}
Now we consider the 2D Cahn-Hilliard equation with double-well potential \eqref{H-double} and the wall free energy \eqref{eq:fw}. We simulate the equilibrium phase-fields of sessile droplets on flat substrate with different contact angles: $\beta_w=\pi/6, \pi/4, \pi/3, 5\pi/12, \pi/2, 7\pi/12, 2\pi/3, 4\pi/5,5\pi/6$. The equilibrium phase fields at $t=0.1$ and the evolution of their energy are shown in Fig.~\ref{fig:equil_angle}. The smoothed initial phase is given by a sharp-interface phase convolution with a mollifier
\begin{align*} 
&\rho_0(x,y) = \tilde{\rho}_0(x,y)*W(x,y)-1,\\
&\tilde{\rho}_0(x,y) = \begin{dcases}
	2 & \text{if $x^2+y^2<0.25^2$ }, \\
	0 & \text{if $x^2+y^2>0.25^2$}.
\end{dcases}
\quad
W(x,y) = \frac{1}{4\pi \epsilon^2}e^{-\frac{x^2+y^2}{4\epsilon^2}}.
\end{align*}

\begin{figure}[H]
	\centering
	\includegraphics[width=0.32\textwidth]{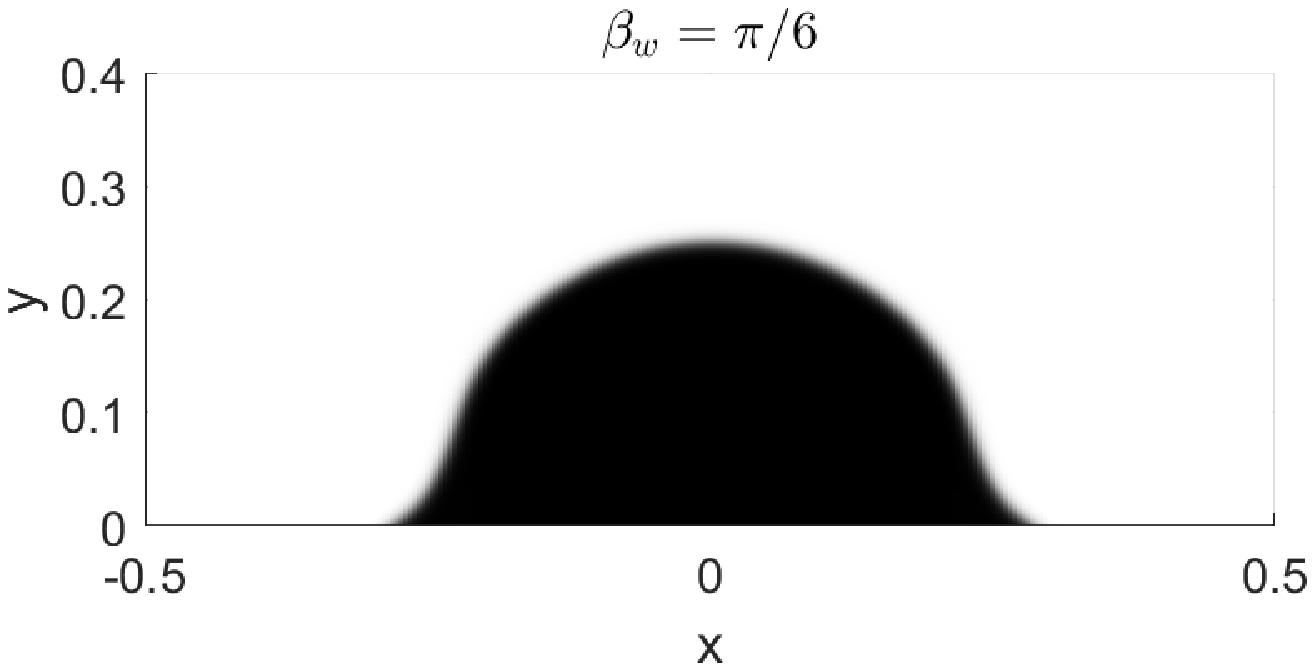}
	\includegraphics[width=0.32\textwidth]{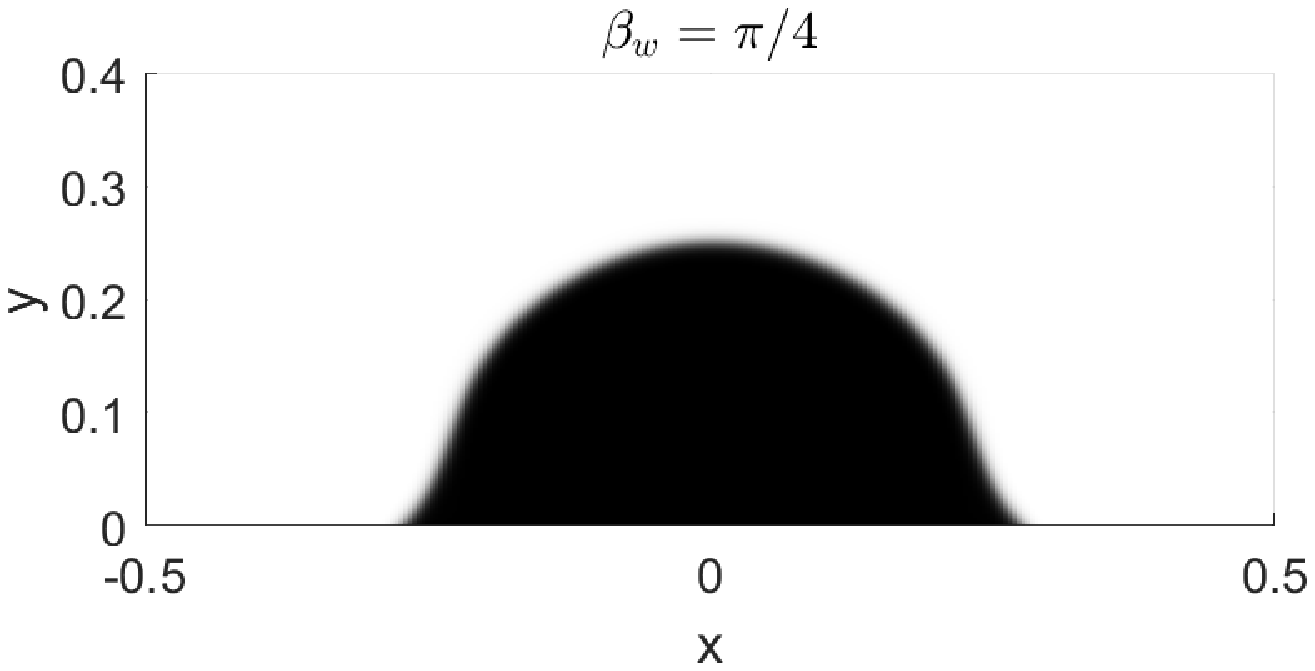}
	\includegraphics[width=0.32\textwidth]{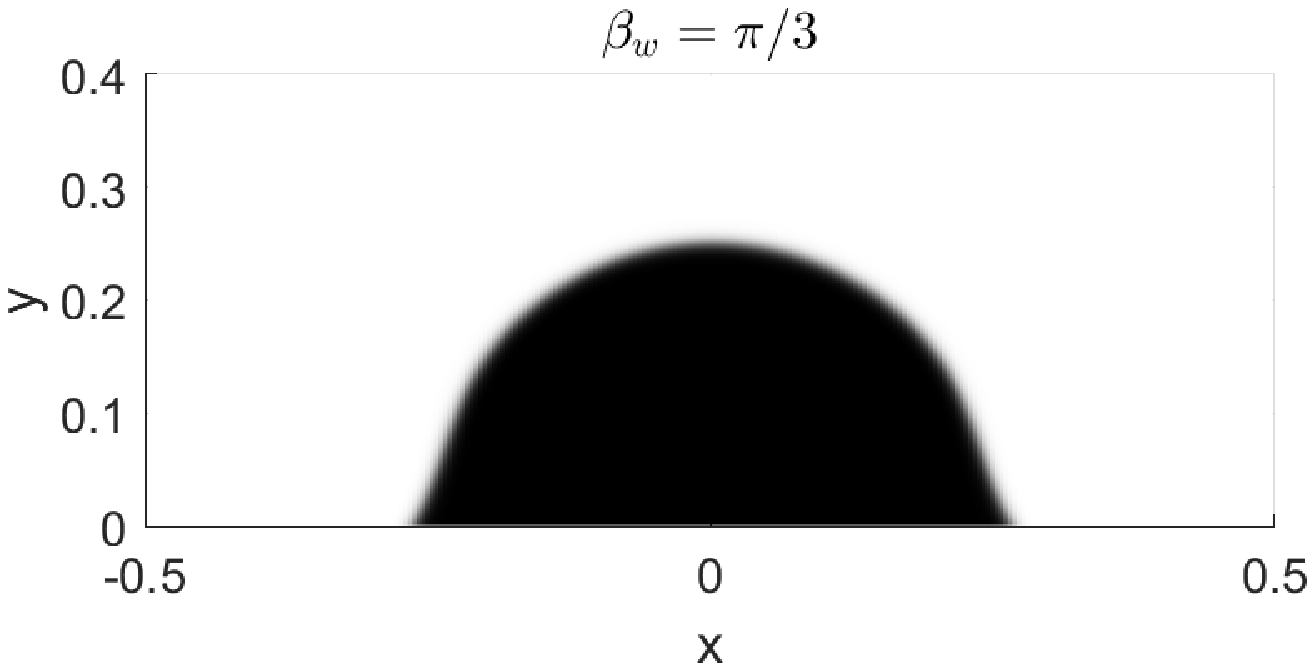}\\
	\includegraphics[width=0.32\textwidth]{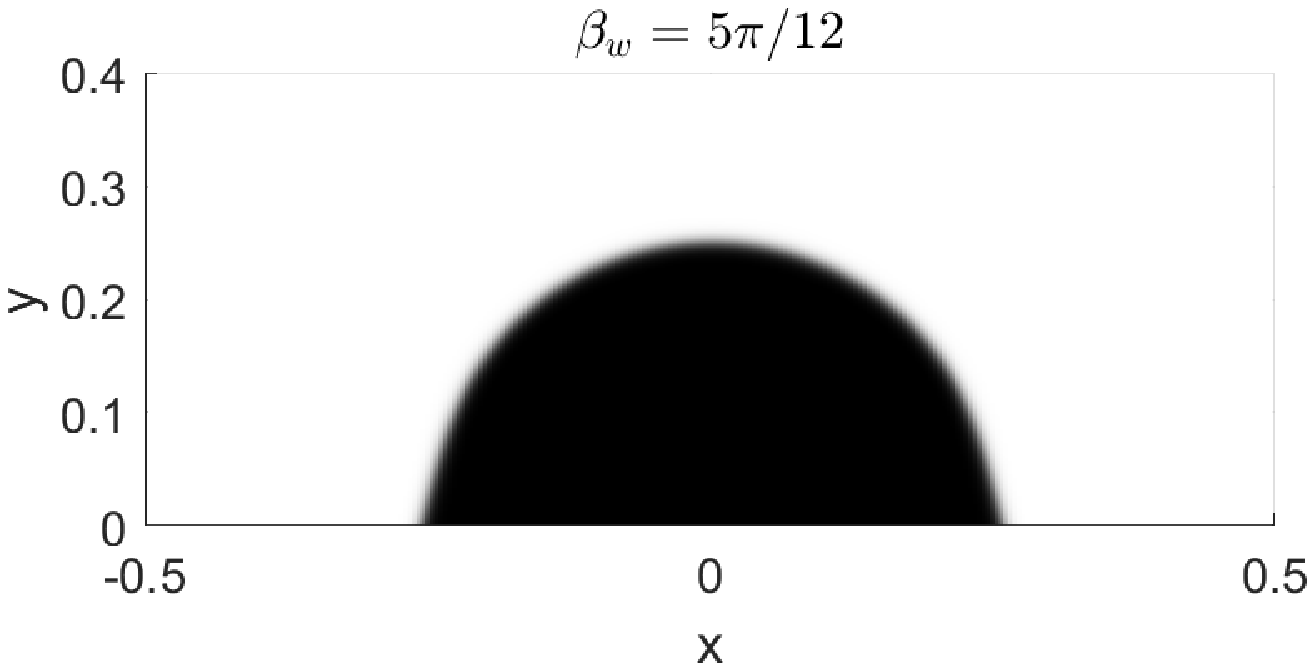}
	\includegraphics[width=0.32\textwidth]{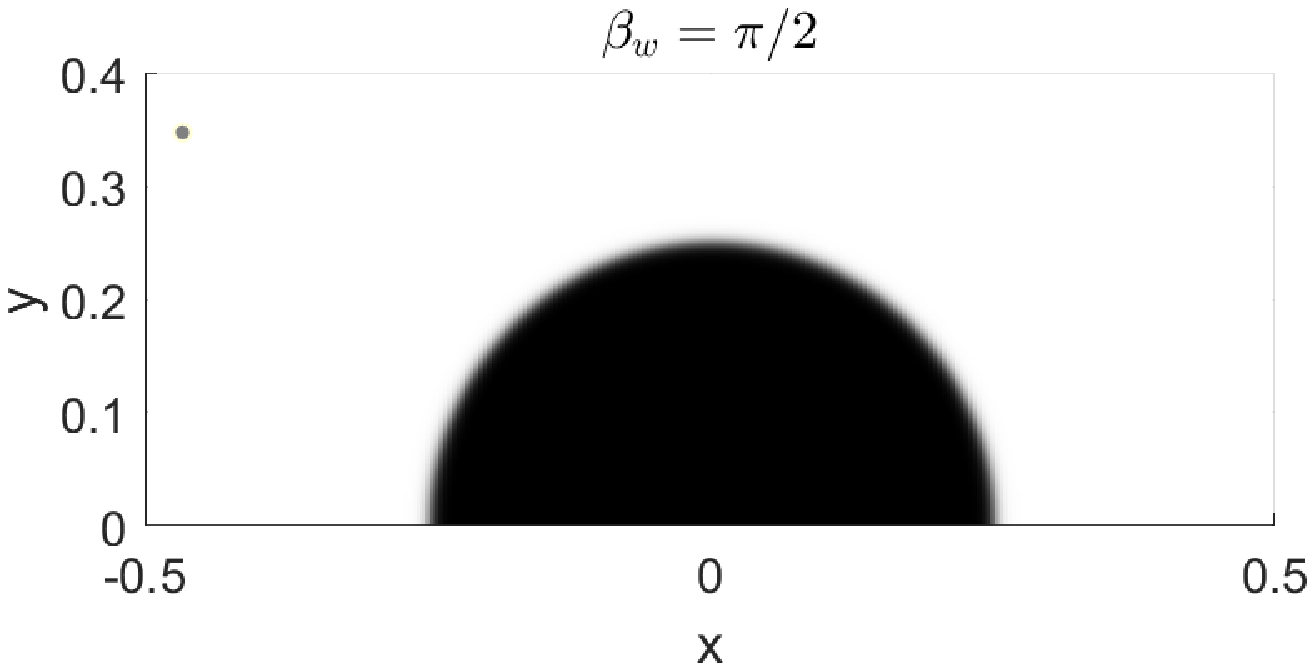}
	\includegraphics[width=0.32\textwidth]{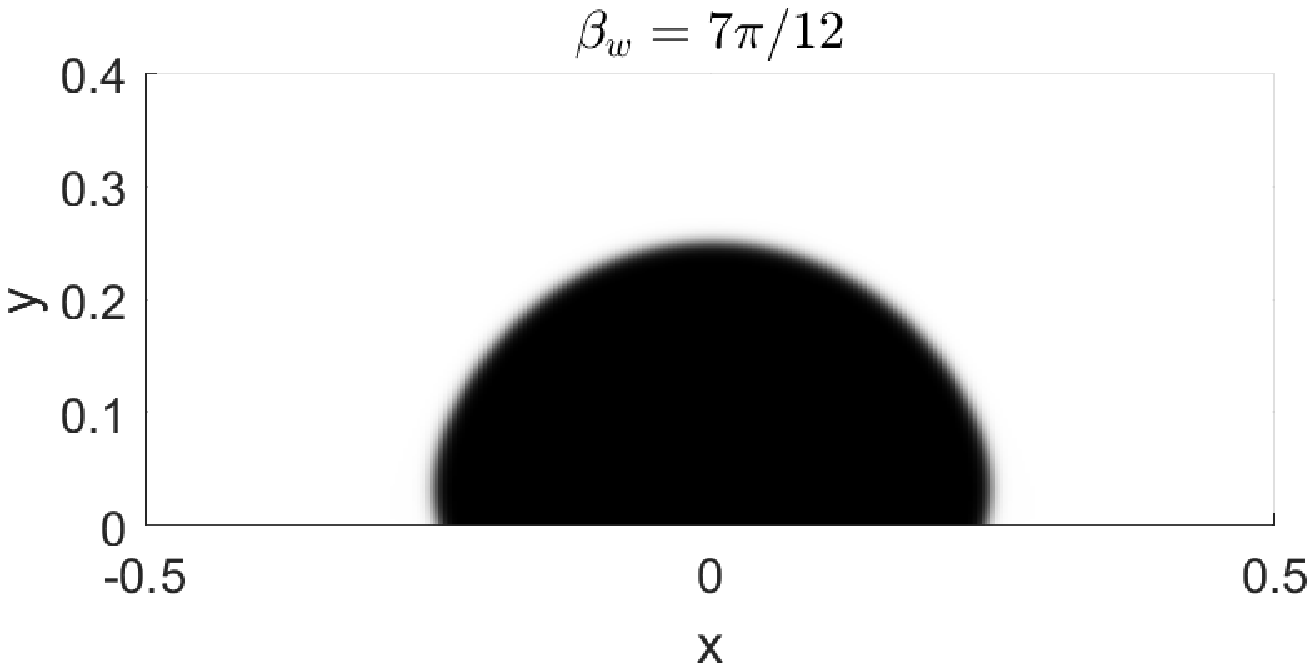}\\
	\includegraphics[width=0.32\textwidth]{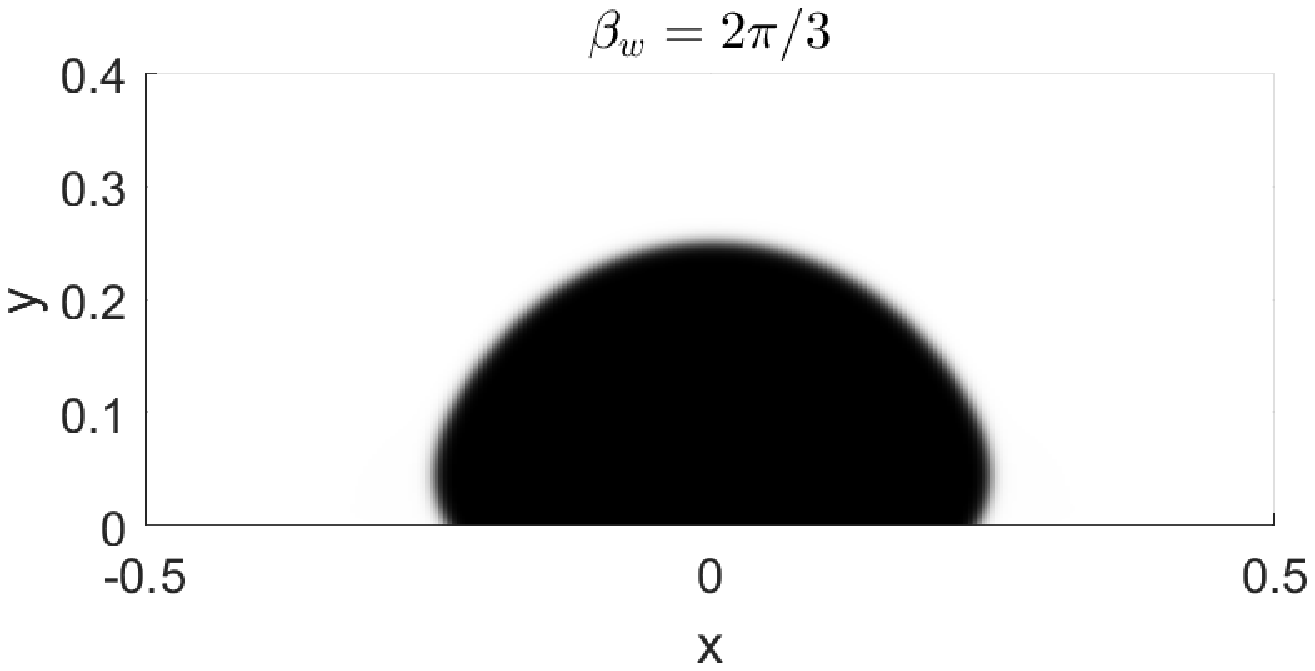}
	\includegraphics[width=0.32\textwidth]{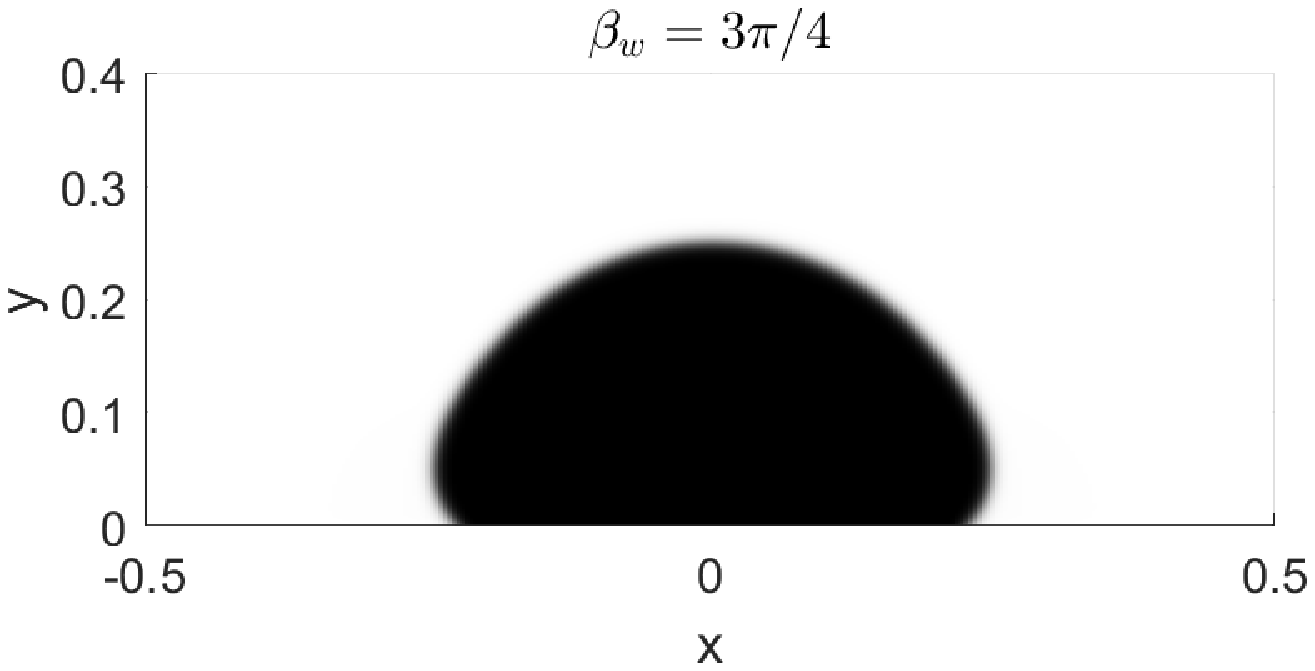}
	\includegraphics[width=0.32\textwidth]{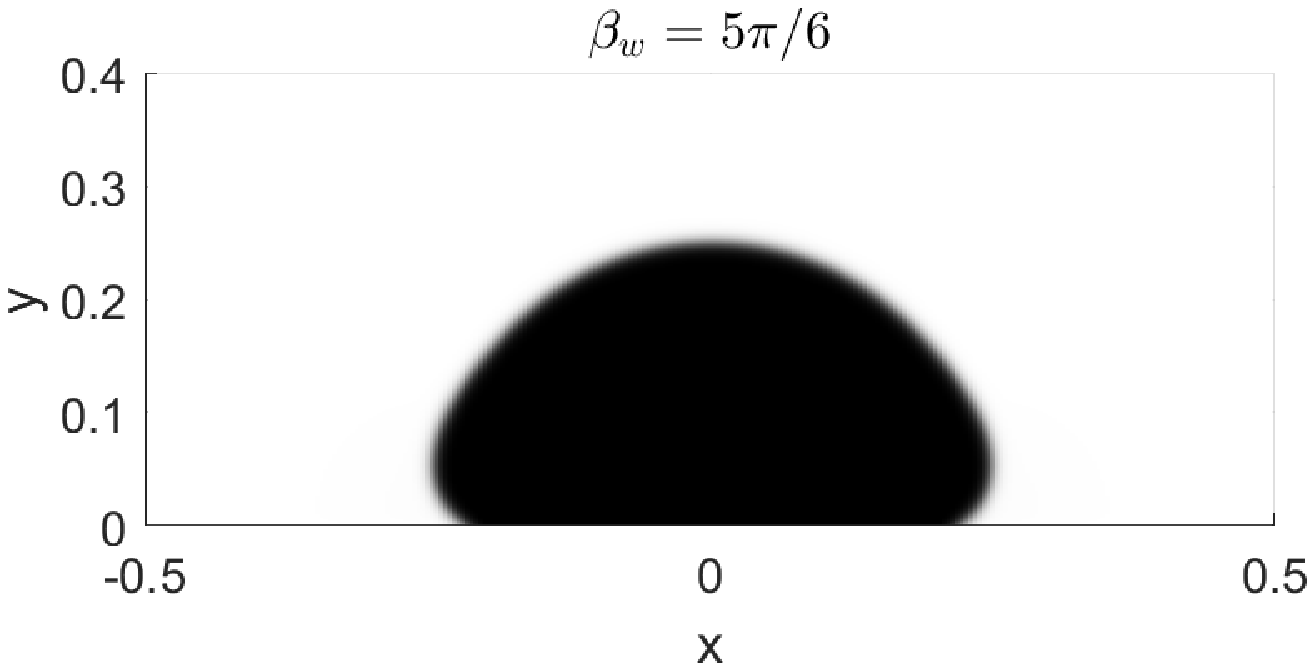}\\
	\includegraphics[width=0.45\textwidth]{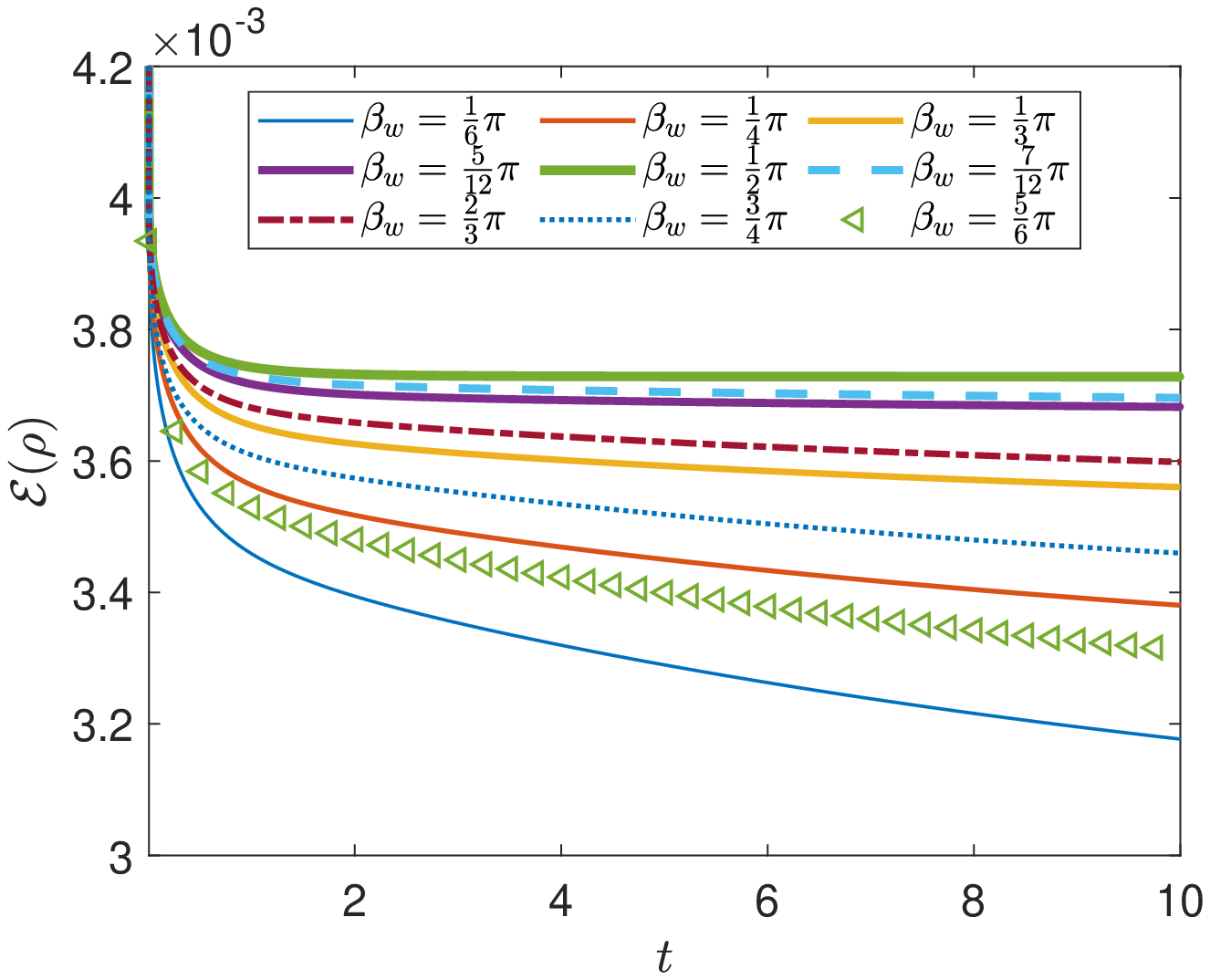}
	\caption{Phase-fields of sessile droplets at $t=0.1$ with different contact angles and the evolution of their total free energy: $\beta_w=\pi/6, \pi/4, \pi/3, 5\pi/12, \pi/2, 7\pi/12, 2\pi/3, 4\pi/5,5\pi/6$. We take $\epsilon=0.012$, $\tau=0.01$ and 256$\times$256 cells for simulations. }\label{fig:equil_angle}
\end{figure}
 
Then We simulate the dynamics of two droplets with two choices of contact angle: $\beta_w=\pi/4, 3\pi/4$. The temporal evolution of the droplets and their energies are shown in Fig.~\ref{fig:bimodal}. We observe that the two droplets merge and form a single phase on a hydrophilic substrate ($\beta_w=\pi/4$); while they remain separated with some distance on a hydrophobic substrate ($\beta_w=3\pi/4$). Consequently, we observe two stages of energy decay during the evolution for $\beta_w=\pi/4$, where the first mild decay corresponds to two droplets adjusting the contact angle and the second dramatic decay corresponds to the coalesce of the two droplets. The initial phase is given by the convolution with the mollifier
\begin{align*}\label{eq:drop_ini2}
&\rho_0(x,y) = \tilde{\rho}_0(x,y)*W(x,y)-1,\\
&\tilde{\rho}_0(x,y) = 
\begin{dcases}
	2 & \text{if $(x+0.35)^2+y^2<0.3^2$ or $(x-0.35)^2+y^2<0.3^2$ }, \\
	0 & \text{elsewhere}.
\end{dcases} 
\end{align*}

\begin{figure}[H]
	\centering
	\includegraphics[width=0.4\textwidth]{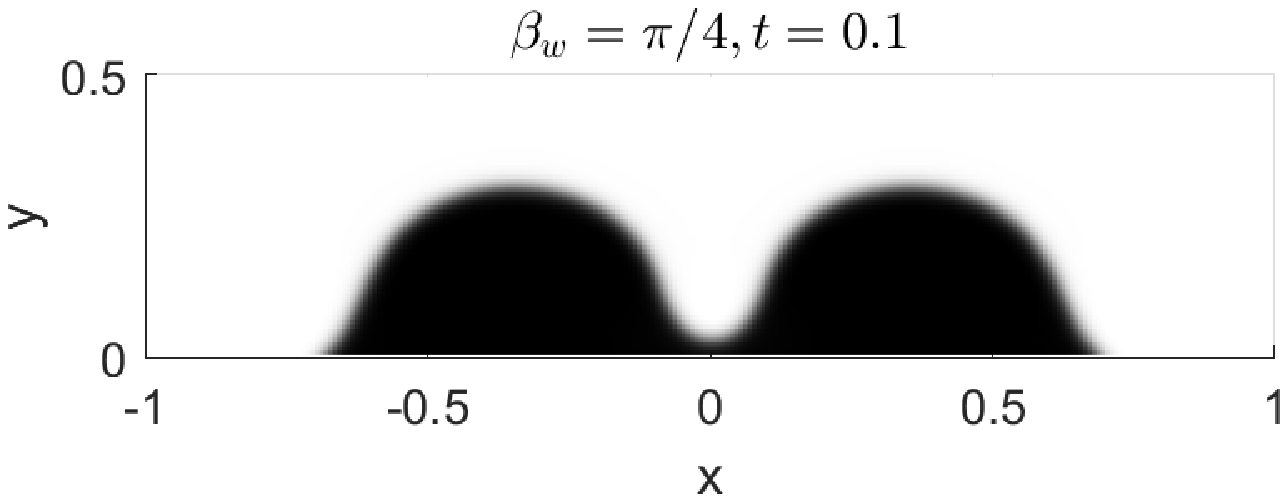}
	\includegraphics[width=0.4\textwidth]{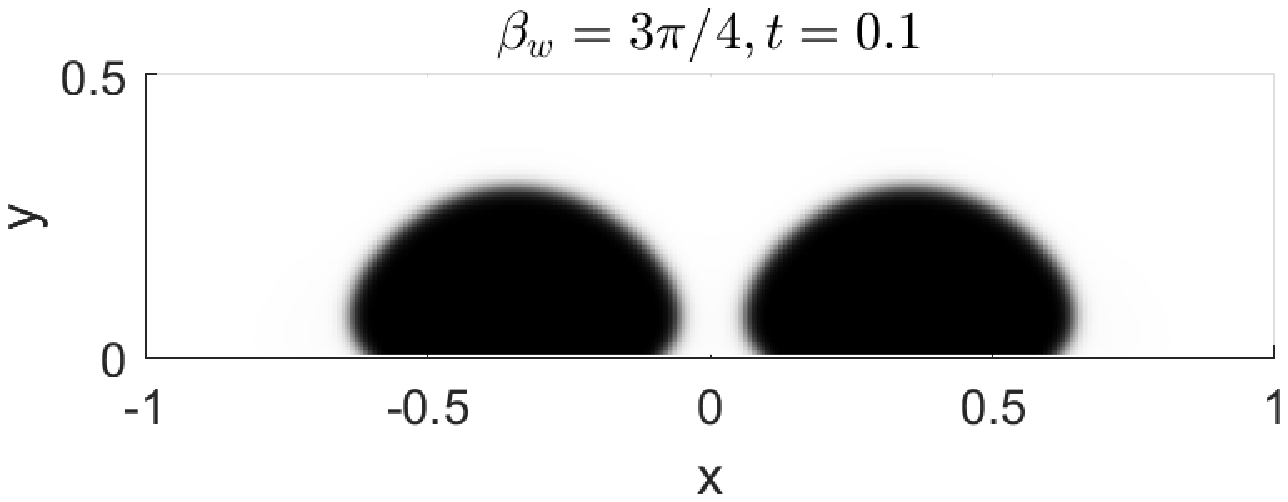}\\
	\includegraphics[width=0.4\textwidth]{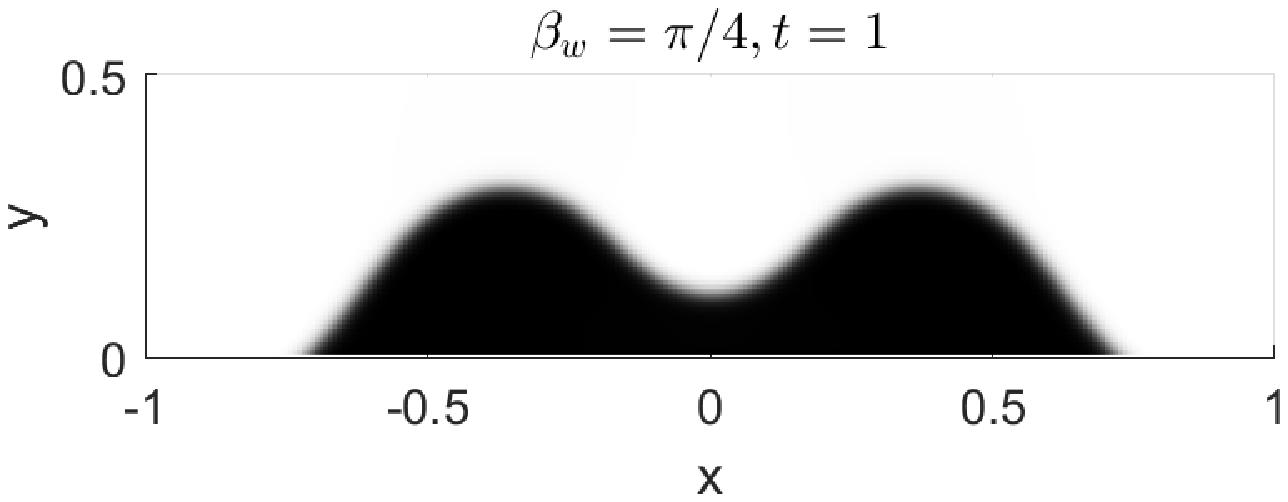}
	\includegraphics[width=0.4\textwidth]{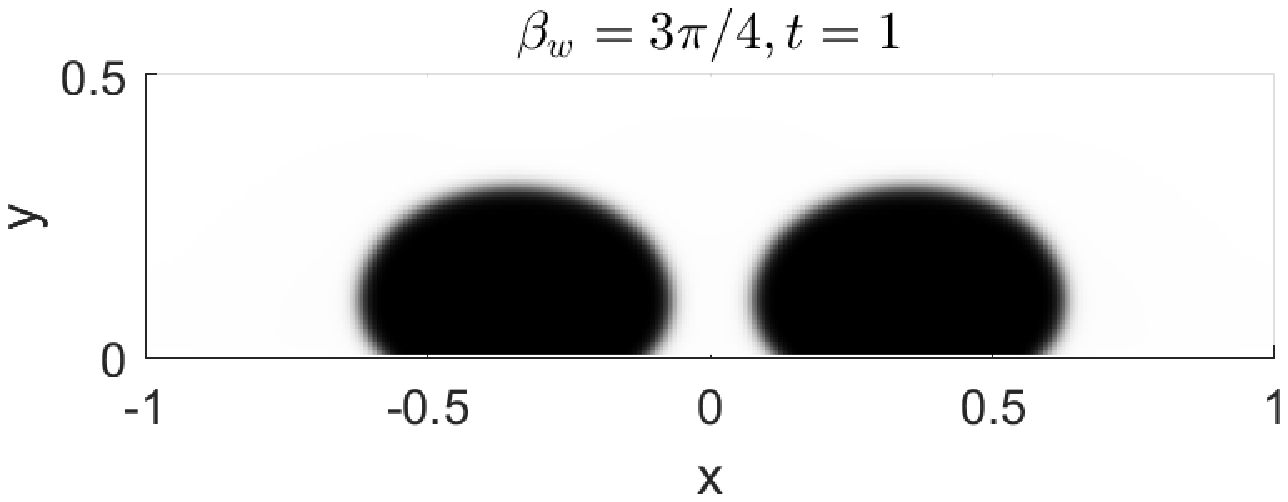}\\
	\includegraphics[width=0.4\textwidth]{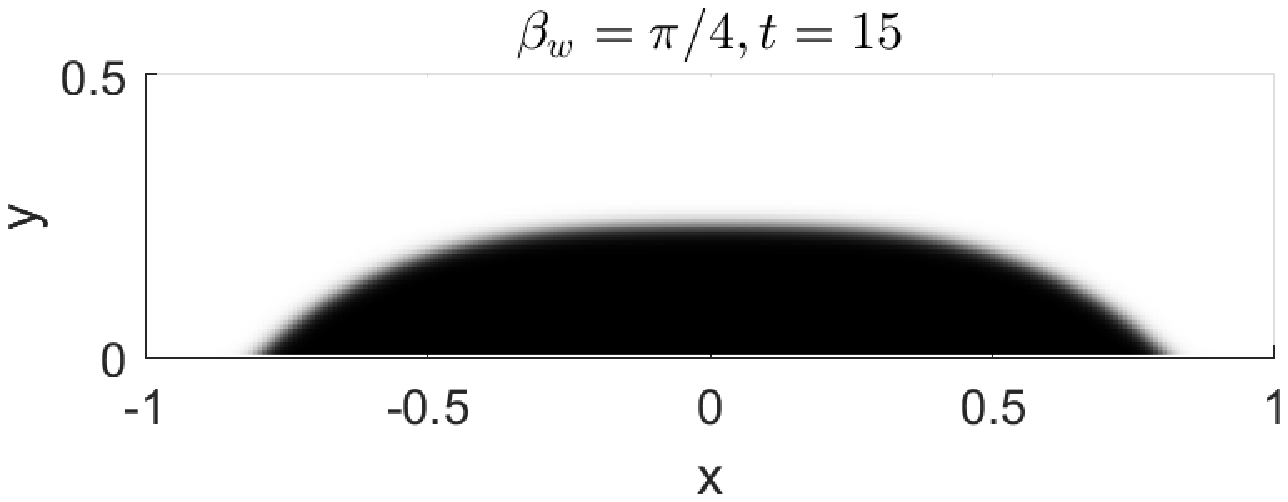}
	\includegraphics[width=0.4\textwidth]{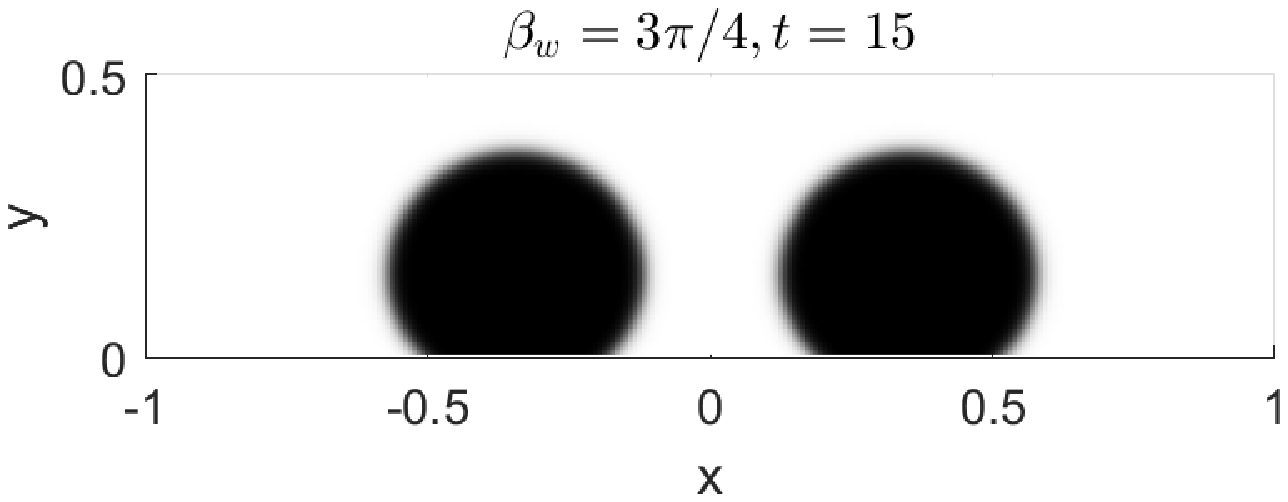}\\
	\includegraphics[width=0.4\textwidth]{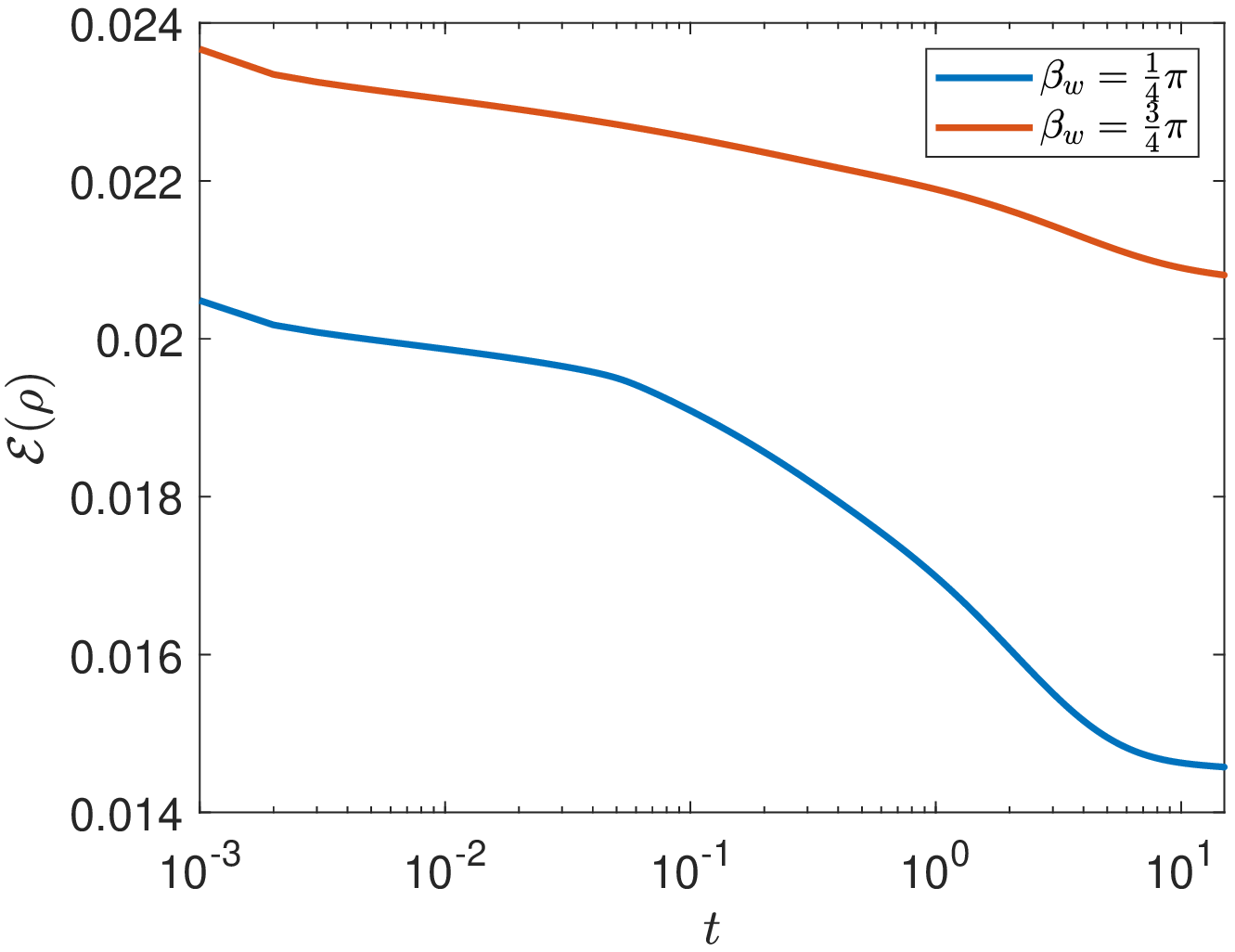}
	\caption{Temporal evolution of two droplets with contact angle $\beta_w=\pi/4$ (Left) and $\beta_w=3\pi/4$ (Right) and their free energy. We take $\epsilon=0.005$, $\tau=0.005$ and 256$\times$64 cells for simulations. }\label{fig:bimodal}
\end{figure}

In the end, we consider the wetting phenomenon of two droplets with different sizes. We investigate the different dynamics of the droplets induced by the Ginzburg-Landau double-well potential $H_{GL}$ \eqref{H-double} and the logarithmic potential $H_{log}$ \eqref{H-log} ($\theta=0.3,\theta_c=1$) with the nonlinear degenerate mobility $M(\rho)=(1-\rho^2)$. It was shown formally that the Cahn-Hilliard equation with $H_{log}$ and $M(\rho)$ converges to the sharp limit motion of surface diffusion flow \cite{elliott1996cahn}; while the pair of $H_{GL}$ and $M(\rho)$ leads to the motion driven by both surface diffusion and additional bulk diffusion \cite{Lee2015APL,jiang2019ccp,Bretin2022M3AS}. The simulation results in Fig.~\ref{fig:mobility_bimodal} show that the small droplet is gradually absorbed by the large droplet due to the additional bulk diffusion induced by $H_{GL}$, and a dramatic energy decay occurs corresponding to the disappearance of the small droplet. However, the two droplets remain distant and the small droplet does not disappear with $H_{log}$ .

\begin{figure}[H]
	\centering
	\includegraphics[width=0.4\textwidth]{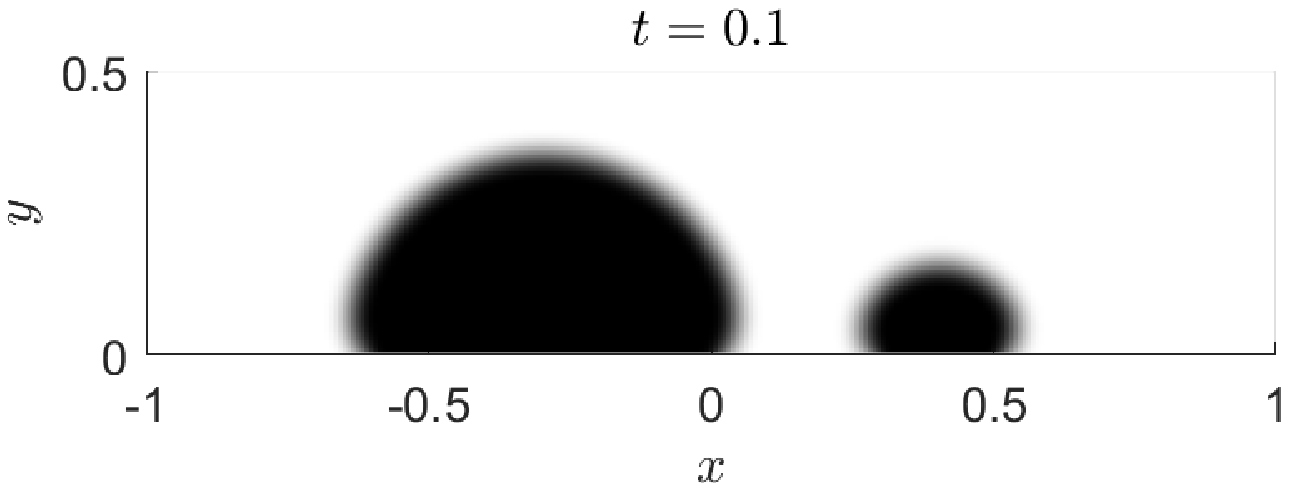}
	\includegraphics[width=0.4\textwidth]{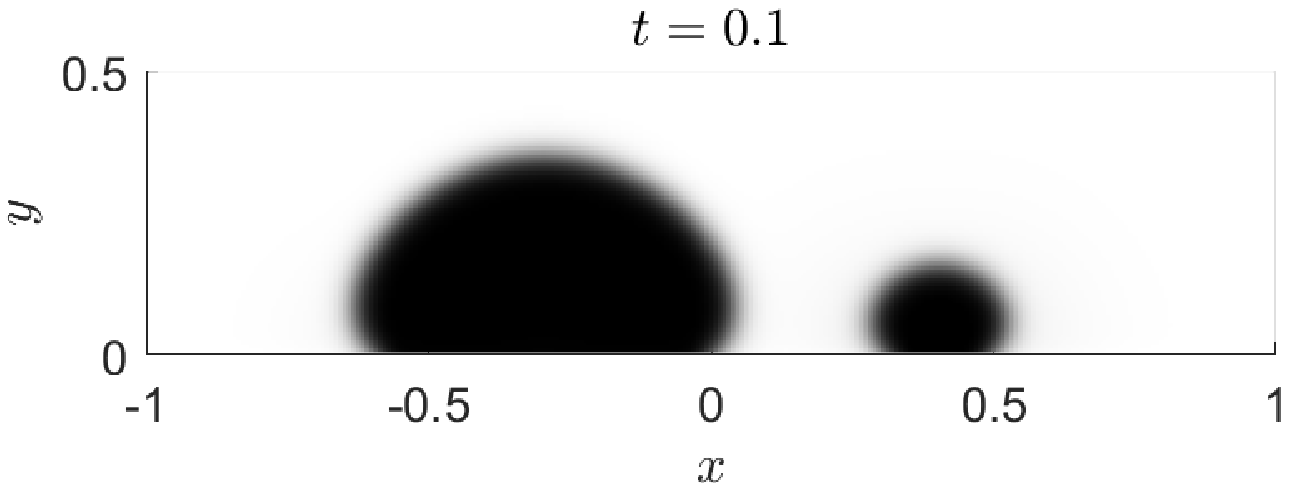}\\
	\includegraphics[width=0.4\textwidth]{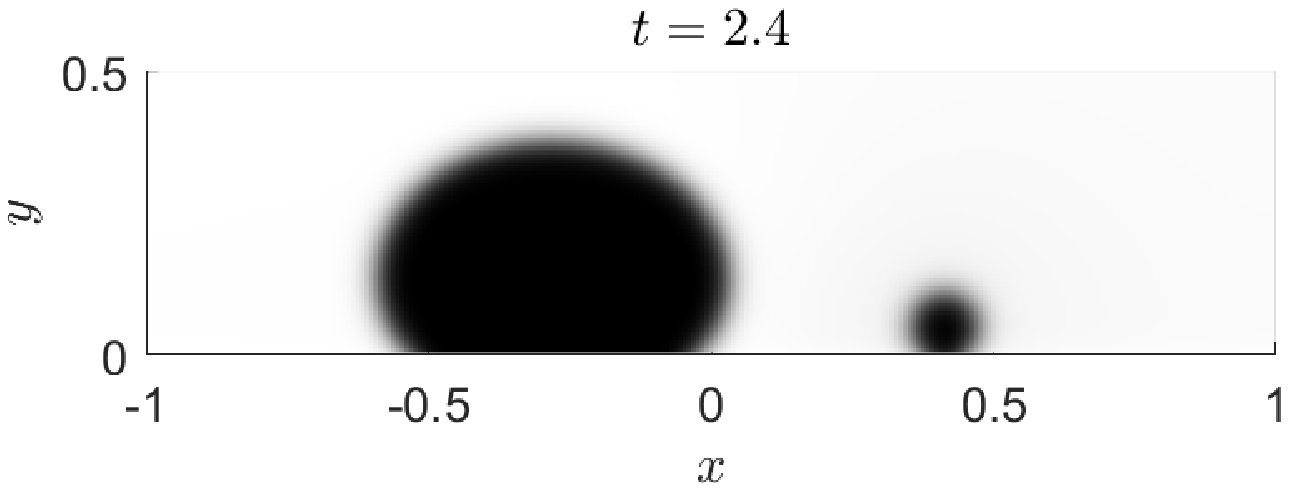}
	\includegraphics[width=0.4\textwidth]{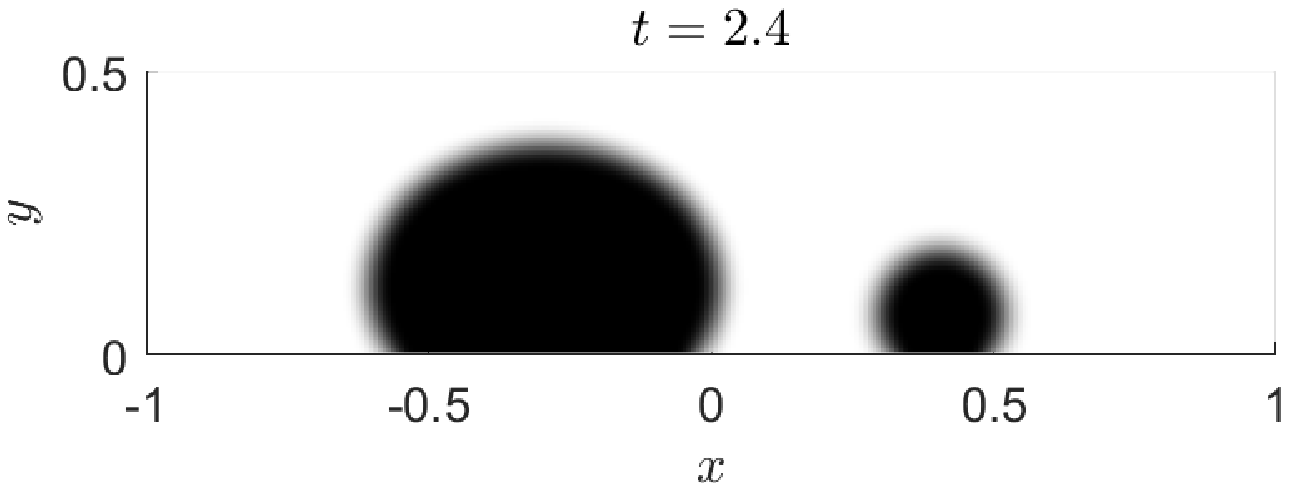}\\
	\includegraphics[width=0.4\textwidth]{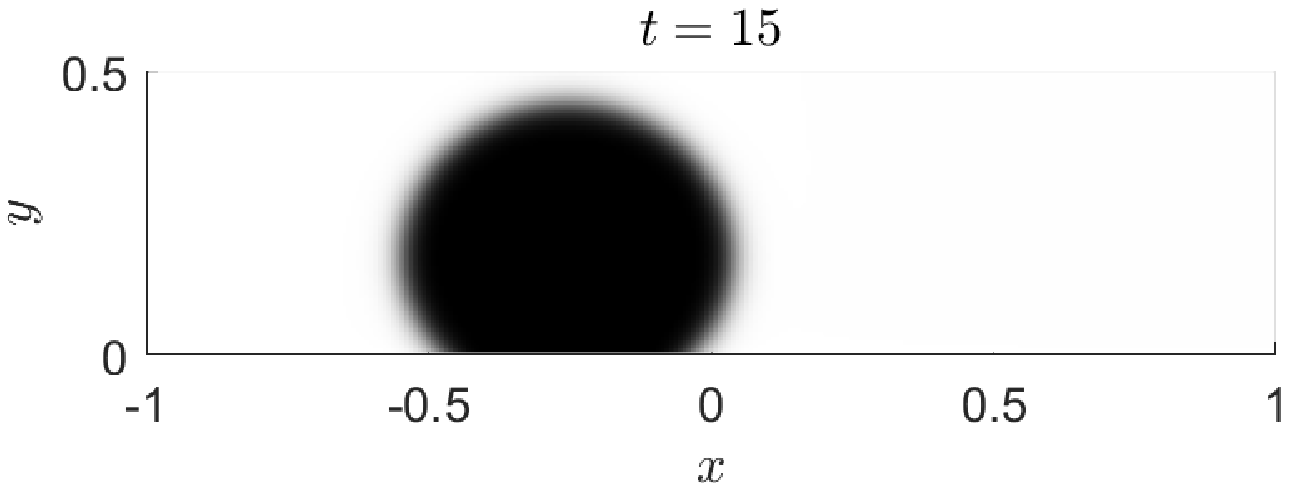}
	\includegraphics[width=0.4\textwidth]{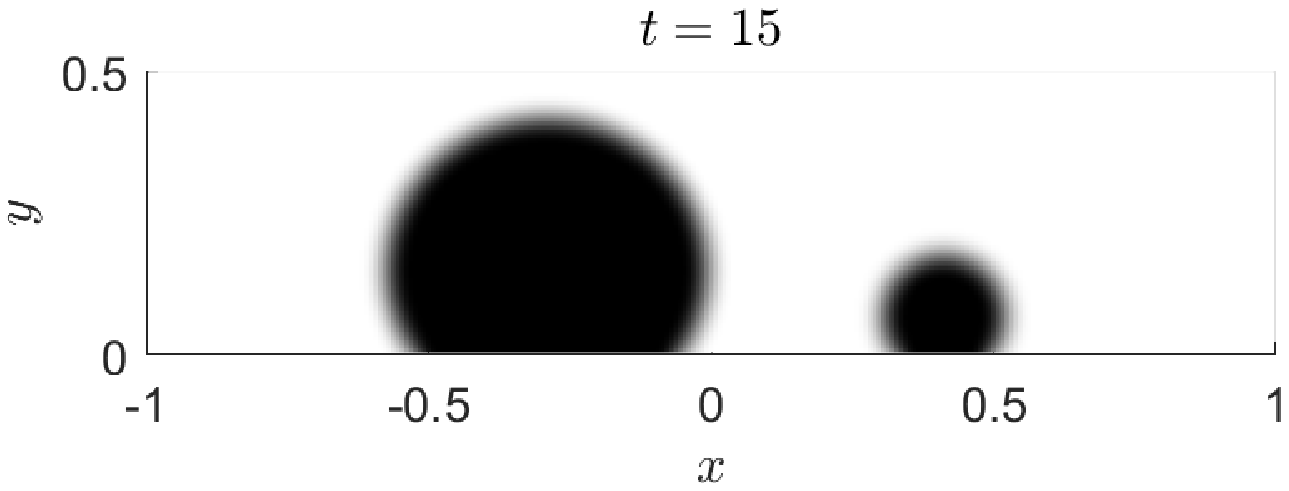}\\
	\includegraphics[width=0.4\textwidth]{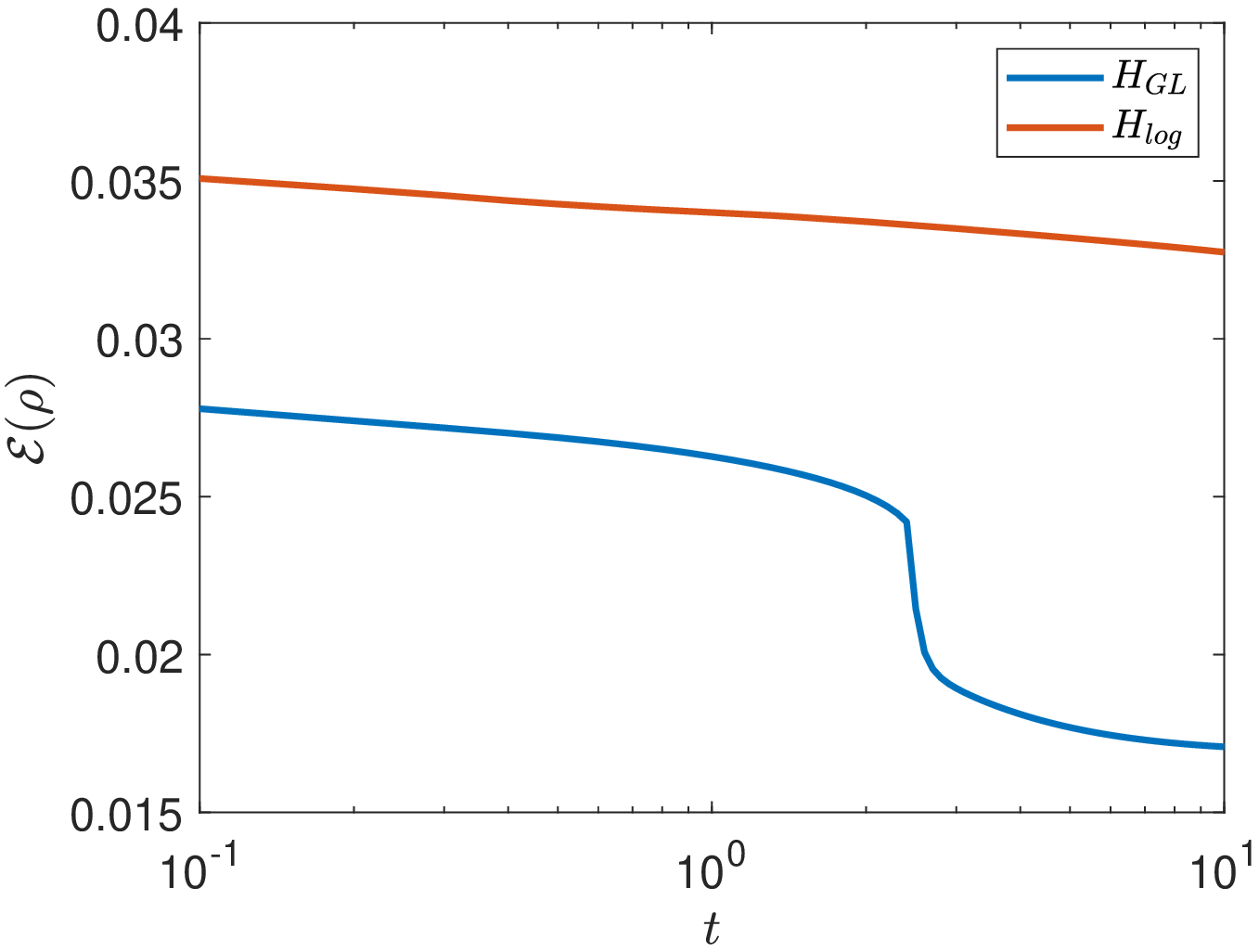}
	\caption{Temporal evolution of two droplets in different sizes by the Ginzburg-Landau double-well potential $H_{GL}$ (Left) and logarithmic potential $H_{log}$ (Right) and their free energy (Bottom). We take $\epsilon=0.02$, $\beta_w=3\pi/4$, $\tau=0.1$ and 256$\times$96 cells.}\label{fig:mobility_bimodal}
\end{figure}

\section*{Acknowledgements}
JAC was supported by the ERC Advanced Grant No. 883363 (Nonlocal PDEs for Complex Particle Dynamics (Nonlocal-CPD): Phase Transitions, Patterns and Synchronization) under the European Union’s Horizon 2020 research and innovation programme. JAC was also partially supported by EPSRC Grants No. EP/V051121/1 (Stability analysis for non-linear partial differential equations across multiscale applications) under the EPSRC lead agency agreement with the NSF, and EP/T022132/1 (Spectral element methods for fractional differential equations, with applications in applied analysis and medical imaging). LW acknowledges the support from NSF grant DMS-1846854.

\bibliographystyle{siam}
\bibliography{ref}

\end{document}